\DeclareMathOperator{\Lie}{Lie}
\DeclareMathOperator{\val}{val}
\DeclareMathOperator{\Hom}{Hom} 
\DeclareMathOperator{\End}{End}
\DeclareMathOperator{\Int}{Int} 
\DeclareMathOperator{\ad}{ad} 
\DeclareMathOperator{\Ad}{Ad} 
\DeclareMathOperator{\Spec}{Spec} 
\DeclareMathOperator{\Rep}{Rep} 
\DeclareMathOperator{\Norm}{Norm}
\DeclareMathOperator{\Cent}{Cent}
\DeclareMathOperator{\id}{id}
\newcommand{\Fbar}{\overline{F}}
\numberwithin{equation}{section}
\newtheorem{theorem}{Theorem}[section]
\newtheorem{corollary}[theorem]{Corollary}
\newtheorem{lemma}[theorem]{Lemma}
\newtheorem{proposition}[theorem]{Proposition}
\theoremstyle{definition}
\newtheorem{definition}[theorem]{Definition} 
\newtheorem{remark}[theorem]{Remark} 
\newtheorem{example}[theorem]{Example}
\begin{document}

\title{Generalized affine Springer fibers}

\author {Robert Kottwitz}\thanks{The research of R.K. was  
supported in part by NSF grant DMS-0245639}
\address{Kottwitz: Department of Mathematics\\ University of Chicago\\ 5734
University Avenue\\ Chicago, Illinois 60637}
\author {Eva Viehmann}
\address{Viehmann: Mathematisches Institut der Universit\"at Bonn \\ Endenicher
Allee 60 \\ 53115 Bonn \\ Germany}

\begin{abstract} This paper studies two new kinds of affine Springer fibers that
are adapted to the root valuation strata of Goresky-Kottwitz-MacPherson.
In addition it develops various linear versions of Katz's Hodge-Newton
decomposition. 
\end{abstract}

\subjclass[2010]{Primary 11F85; Secondary  20G25, 22E67}
\keywords{Root valuation strata, affine Springer fibers, Hodge-Newton
decomposition}

\maketitle

\section{Introduction} 
Let $G$ be a connected reductive group over $\mathbb C$. Let $A$ be a maximal
torus in $G$. We write $\mathfrak a$ for the Lie algebra of $A$, $R$ for its set
of roots, and $W$ for its Weyl group. 

For $u \in \mathfrak g=\Lie G$ the Springer fiber over $u$ is the closed
subvariety of the flag manifold of $G$ obtained as the set of Borel subgroups $B$
such that $u \in \Lie B$. For regular $u \in \mathfrak a$ the Springer fiber over
$u$ is the set of Borel subgroups containing $A$, on which $W$ acts simply
transitively.

Now consider the formal power series ring $\mathcal O=\mathbb C [[\epsilon]]$ and
its fraction field $F=\mathbb C((\epsilon))$. To keep this introduction more
readable, we  assume that $G$ is semisimple and simply connected, so that
Iwahori subgroups in $G(F)$ are their own normalizers. For $u \in \mathfrak g(F)$
the affine Springer fiber \cite{kazhdan-lusztig88} over $u$ is the closed subset of
the affine flag manifold obtained as the set of Iwahori subgroups $I$ such that $u
\in \Lie I$. 

The affine Springer fiber over $u$ is nonempty when $u$ is \emph{integral} in the
sense that it is contained in some Iwahori subalgebra (equivalently, in some
parahoric subalgebra). For $u \in \mathfrak a(\mathcal O)$ with regular image in
$\mathfrak a(\mathbb C)$ the affine Springer fiber over $u$ is the set of Iwahori
subgroups containing $A(\mathcal O)$, on which the affine Weyl group $W
\ltimes X_*(A)$ acts simply transitively. 

Now suppose only that $u \in \mathfrak a(F)$ is regular in $\mathfrak g(F)$. As in
\cite{gkm} we attach to $u$ the root valuation function $r_u:R \to \mathbb Z$
defined by $r_u(\alpha)=\val \alpha(u)$. Then $u$ is integral if and only if
every value of $r_u$ is nonnegative (equivalently, if $u \in \mathfrak a(\mathcal
O)$), and in this case the image of $u$ in $\mathfrak a(\mathbb C)$ is regular if
and only if $r_u$ is the constant function $0$. The affine Springer fiber over $u$
becomes more and more complicated as the values of $r_u$ increase.

All the results in this paper were motivated by the desire to find a  
generalized affine Springer theory adapted to a given root valuation
function $r$. More precisely we wanted the generalized affine
Springer fibers over  points in the stratum
\[
\mathfrak a(F)_r:=\{u \in \mathfrak a(F): r_u=r\} 
\]
to  resemble traditional affine Springer fibers over points in the simplest
stratum 
\[
\mathfrak a(F)_0:=\{u \in \mathfrak a(F): r_u=0\}. 
\]

In this paper we give two such generalizations, each of which has it advantages
and disadvantages. By way of motivation we first consider the trivial case in
which $r$ is constant with value $n$. We fix an Iwahori subgroup $I$ containing
$A(\mathcal O)$ and consider the generalized affine Springer fiber 
\begin{equation} \label{eqn.str}
\{g \in G(F)/I : g^{-1}ug \in \epsilon^n\Lie I  \}.
\end{equation}
This is not much of a generalization, since the set \eqref{eqn.str} coincides
with the traditional affine Springer fiber over $\epsilon^{-n}u$, but it does the
job, because now our generalized affine Springer fibers over points in $\mathfrak
a(F)_r$ are the same as the traditional ones over $\mathfrak a(F)_0$.

When $r$ is not constant, the situation becomes much more interesting, and we
consider generalized affine Springer fibers of the form 
 \begin{equation} \label{eqn.strstr}
Z_{r,\lambda}(u):=\{g \in G(F)/K_{r,\lambda} : g^{-1}ug \in \Lambda_{r,\lambda} 
\},
\end{equation} 
where $\Lambda_{r,\lambda}$ is what we call a \emph{root valuation lattice} for
$r$, and where $K_{r,\lambda}$ is the connected normalizer of $\Lambda_{r,\lambda}$
in
$G(F)$. When $r$ is constant with value $n$, the lattice $\epsilon^n\Lie I$ is in
fact a root valuation lattice for $r$, and $K_{r,\lambda}$ turns out to be $I$. 

Root valuation lattices are defined in section \ref{sec.defrvl}, and in section
\ref{sec.conjthm} we prove for them the Conjugation Theorem \ref{thm.conj}. When
translated into the language of generalized affine Springer fibers, this theorem
states that for $u
\in
\mathfrak a(F)_r$ the set
\eqref{eqn.strstr} can be identified with a certain quotient  of $W_r \ltimes
X_*(A)$; here
$W_r$ is the stabilizer of $r$ in $W$, and the quotient 
is  taken modulo the subgroup consisting of elements that can be realized in
the normalizer of $A$ in $K_{r,\lambda}$.  

One disadvantage of root valuation lattices is that  $K_{r,\lambda}$
is usually not a parahoric subgroup, but rather just a subgroup of one, so that
$G(F)/K_{r,\lambda}$ is usually not ind-proper. However they have the advantage
that   $\Lambda_{r,\lambda}$ always contains $\{ k^{-1}uk : k \in
K_{r,\lambda}, u
\in \mathfrak a(F)_r 
\}$ as a Zariski dense open subset (see section \ref{sec.defrvl}), from which it
immediately follows that every element in $\Lambda_{r,\lambda}$ lies in the
closure of the root valuation stratum $\mathfrak g(F)_r=\{ g^{-1}ug : g \in
G(F), u
\in \mathfrak a(F)_r \}.$ 
Thus root valuation lattices give, at least in principle, some information
concerning the difficult unsolved problem of describing closures of root valuation
strata in $\mathfrak g(F)$. From this point of view it is certainly desirable to
have $\Lambda_{r,\lambda}$ be as big as possible. Here the size of a lattice is
measured by its codimension in some standard lattice containing it. 

Our characterization of root valuation lattices in section \ref{sec.defrvl} yields
an upper bound on their size. In section \ref{sec.big} we define explicitly for
each root valuation function  a corresponding root valuation lattice which almost
achieves this upper bound. For groups of small rank we were always able to find an
ad hoc definition of an optimal root valuation lattice whenever our
general construction did not already provide one. This did not lead, however, to a
likewise simple general construction of larger root valuation lattices.

Now we turn to our other generalization of affine Springer theory, again beginning
with the case in which
$r$ is constant with value $n$. Then \eqref{eqn.str} can also be described as 
\[
\{ g \in G(F)/I : g^{-1}ug \in C_n \},
\] 
where 
\[
C_n=\{ v \in \mathfrak g(F) : \ad(v) \Lie I \subset \epsilon^n \Lie I \}.
\] 
Note that $C_n$ is obtained by imposing a condition on the relative position of
the lattice $\Lie I$ and the finitely generated $\mathcal O$-module $\ad(v)\Lie
I$. 

In general we put $N=|R|$ and arrange the $N$ values of $r$ in weakly increasing
order $r_1 \le \dots \le r_N$, and we then define a subset $C_r$ of $\mathfrak
g(F)$ as the set of $v$ such that the relative position of the lattice $\Lie I$ 
and the finitely generated $\mathcal O$-module $\ad(v)\Lie
I$ is
less than or equal to $(r_1,\dots,r_N)$, in the sense that for all $i=1,\dots,N$
the
$i$-th exterior power of $\ad(v)$ maps $\wedge^i(\Lie I)$ into
$\epsilon^{r_1+\dots+r_i}\wedge^i(\Lie I)$. Clearly $C_r$ is Zariski closed and
stable under the adjoint action of $I$; moreover there exists an integer $M \gg 0$
such that
$\epsilon^M\Lie I
\subset C_r \subset \epsilon^{-M}\Lie I$. Using $C_r$, we then obtain another
kind of generalized affine Springer fiber 
\begin{equation}\label{eqn.dagintro}
Y_r(u):=\{ g \in G(F)/I : g^{-1}ug \in C_r \}
\end{equation}
adapted to the given root valuation function $r$. In Theorem \ref{thm.fibers} we
show that the set \eqref{eqn.dagintro} can be identified with the affine Weyl
group when $u \in \mathfrak a(F)_r$. 

One advantage of this construction is that it is canonical. Another is that we  
are now working inside the affine flag manifold, which is ind-proper.
Consequently the set of $u \in \mathfrak g(F)$ for which
\eqref{eqn.dagintro} is non-empty is a closed $G(F)$-invariant subset of
$\mathfrak g(F)$ containing
$\mathfrak g(F)_r$, and hence gives an upper bound for the closure of $\mathfrak
g(F)_r$. Unfortunately Theorem \ref{thm.fibers} also says that for any other root
valuation function $r'$ such that $r'_i=r_i$ for $i=1,\dots,N$, the fibers
\eqref{eqn.dagintro} over $u \in \mathfrak g(F)_{r'}$ are also non-empty, and such
points do not lie in the closure of $\mathfrak g(F)_r$ unless $r'$ lies in the
$W$-orbit of $r$. It is likely that by refining our construction slightly, we
could produce a variant of \eqref{eqn.dagintro} that would be empty for $u \in
\mathfrak g(F)_{r'}$ for $r'$ as above, except when $r'$ is in the $W$-orbit of
$r$. At the moment, however, we see no reason to believe that the resulting
improved upper bound for the closures of root valuation strata  would turn out to
be optimal. Therefore we have chosen to present our construction in its simplest
form. 

We finish our overview of the results in this paper by making two comments.  The
first is that in the body of the paper we also allow partial affine flag
manifolds, for example, the affine Grassmannian. 

The second   concerns  the more general
notion of root valuation strata that was considered in \cite{gkm}. These strata
are indexed by certain pairs $(w,r)$ consisting of an element $w$ in the Weyl group
and a  function $R \to \mathbb Q$. Nontrivial elements $w$ arise when one studies
regular semisimple elements in $\mathfrak g(F)$ whose centralizer in $G$ is a
nonsplit $F$-torus. To keep this paper as simple as possible we have chosen to
ignore root valuation strata with nontrivial $w$ (although we do know that our
results on root valuation lattices can be extended to general $w$). Because we do
not treat nontrivial $w$, we drop them from the notation and index the relevant
strata simply by root valuation functions
$r:R
\to
\mathbb Z$. This is justified by Proposition 4.8.3 in \cite{gkm}, whose content is
that
$w$ is redundant when $r$ takes values in $\mathbb Z$. Recently M.~Sabitova
\cite{sabitova} has proved the much more difficult result that $w$ is always
redundant when $G$ is of classical type.

Now we discuss in greater detail the contents of the various sections of this
paper.  In order to prove Theorem \ref{thm.fibers} we need two kinds of input. The
first is a linear (rather than $\sigma$-linear) version of Katz's Hodge-Newton
decomposition that applies to all endomorphisms, not just invertible ones. This
theory is developed in the first three sections of the paper, and along the way we
prove yet another version of the Hodge-Newton decomposition, a linear one that
makes sense for split connected reductive groups. 

The second kind of input involves methods of recognizing points in the building of
$G(F)$ that come from points in the building of a given Levi subgroup. For special
points such a result was proved in \cite{kottwitz03}, though there is an error in
the proof given there (see Remark \ref{rmk.error}). For arbitrary points in the
building we prove such a result in Corollary \ref{cor.recog}. In Theorem
\ref{thm.comim} we derive from Corollary \ref{cor.recog} another way to recognize
points in the building coming from a given Levi subgroup. It is Theorem
\ref{thm.comim} and our new kind of Hodge-Newton decomposition (Theorem
\ref{VTlambda}) together that yield Theorem \ref{thm.fibers}. 

In section \ref{sec7} we review root valuation functions and strata in greater
detail. In section \ref{sec8} we prove Theorem \ref{thm.fibers}, which we have
already discussed. Sections \ref{sec9} and \ref{sec10} provide material that will
be needed in order to prove the Conjugation Theorem. In section \ref{sec.defrvl}
root valuation lattices are defined and characterized. In section
\ref{sec.conjthm} the Conjugation Theorem that we have already discussed is
proved. The main idea in the proof already appears in the key Lemma
\ref{lem.keyconj}, which is basically the induction step needed in our inductive
proof of the Conjugation Theorem. Finally, in section \ref{sec.big} we prove that
big root valuation lattices do exist; this involves some interesting root system
combinatorics of a kind that was completely unfamiliar to us, related to the
function $r_m$ defined in that section. 

In the first five sections there is no need to limit ourselves to $\mathcal O
=\mathbb C[[\epsilon]]$. Instead we work over  
 a complete discrete valuation ring  $\mathfrak o$. We denote by $F$ the
field of fractions of $\mathfrak o$ and choose an algebraic closure $\Fbar$ of
$F$. We also choose a uniformizing element $\varpi \in \mathfrak o$. The valuation
$\val:F^\times \to \mathbb Z$ (normalized so that the valuation of $\varpi$ is $1$)
extends uniquely to a valuation, still denoted by $\val$, from $\Fbar^\times$ onto
$\mathbb Q$. Finally, we denote by $k$ the residue field $\mathfrak o/\varpi
\mathfrak o$. 

Once we start to consider root valuation strata (section \ref{sec7} and beyond), we
will be working with the particular discrete valuation ring
$\mathcal O:=\mathbb C[[\epsilon]]$ and its fraction field $F=\mathbb C
((\epsilon))$. Needless to say we could replace $\mathbb C$ by any algebraically
closed field of characteristic $0$ and nothing would change. 

We warn the reader (and will repeat the warning later) that while $\mathfrak a$
usually denotes the Lie algebra of the maximal torus $A$, there is one section of
the paper, namely section \ref{sec4}, in which we use $\mathfrak a$ to denote 
$X_*(A) \otimes \mathbb R$ instead. This should cause no confusion since the Lie
algebra of $A$ never arises in that section of the paper. 

It is a pleasure to thank Sandeep Varma for his helpful comments on a preliminary
version of this paper. The second author was partly supported by the SFB/TR 45
``Periods, Moduli Spaces and Arithmetic of Algebraic Varieties'' of the DFG. She
also gratefully acknowledges the hospitality of the University of Chicago during
two visits, and the Hausdorff Center for Mathematics, Bonn for funding the second
one. 

\section{Slopes and Newton homomorphisms}

\subsection{Slopes} 
The roots in $\Fbar$ of any irreducible polynomial with coefficients in $F$ and
nonzero constant term all have the same valuation. Therefore any monic polynomial
$f$ with coefficients in $F$ and nonzero constant term factorizes uniquely as 
\begin{equation}\label{eq.slopefact}
f=\prod_{a \in \mathbb Q} f_a,
\end{equation}
 where $f_a$ is a monic polynomial with coefficients in $F$ all of whose roots in
$\Fbar$ have valuation $a$. We refer to this as the \emph{slope factorization} of
$f$. 

Now consider a finite dimensional $F$-vector space $V$ and an invertible linear
transformation $T:V \to V$. We then have the slope factorization
\eqref{eq.slopefact} of the characteristic polynomial $f$ of $T$. The Chinese
remainder theorem then yields the \emph{slope decomposition} 
\begin{equation}\label{eq.sldcp}
V=\bigoplus_{a \in \mathbb Q} V_a
\end{equation} 
with $V_a:= \ker[f_a(T): V \to V]$. Over $\Fbar$ the subspace $V_a$ is the direct
sum of all the generalized eigenspaces for $T$ on $V$ obtained from eigenvalues
having valuation $a$.

\subsection{Newton homomorphisms}  

Let $H$ be a linear algebraic group over $F$. Let $\mathbb D=\Spec F[\mathbb Q]$
be the diagonalizable group scheme over $F$ whose character group is $\mathbb Q$.  
To any $\gamma \in H(F)$ we are going to associate a Newton homomorphism
$\nu_\gamma:\mathbb D \to H$ (defined over $F$). We use a Tannakian approach, as
in section 4 of \cite{kottwitz85}. 

Write $\Rep H$ for the neutral Tannakian category of finite dimensional
representations of $H$. For any $V$ in $\Rep H$ our element $\gamma \in H(F)$ acts
on $V$ by an automorphism that we will denote by $\gamma_V$. We then have the slope
decomposition \eqref{eq.sldcp} with respect to  $\gamma_V$. 

In fact this construction  lifts the canonical fiber functor on $\Rep H$ to a
$\otimes$-functor from $\Rep H$ to the tensor category of $\mathbb Q$-graded
$F$-vector spaces, or, in other words, we now have a $\otimes$-functor $\Rep H \to
\Rep \mathbb D$ that is strictly compatible with the canonical fiber functors on
those two categories. By Tannakian theory this in turn yields an $F$-homomorphism
$\nu_\gamma: \mathbb D \to H$, the \emph{Newton homomorphism},
characterized as follows. Let $R$ be any $F$-algebra and let $d \in \mathbb D(R)$.
Thus $d$ is a homomorphism $a \mapsto d_a$ from $\mathbb Q$ to $R^\times$. Then
for any $V$ in $\Rep H$ and any $a \in \mathbb Q$ the automorphism
$\nu_\gamma(d)_V$ of
$V_R:=V\otimes_F R$ acts by multiplication by $d_a$ on the direct summand
$(V_a)_R$. 

\begin{lemma}\label{lem.phinu}
Let $\varphi:H \to H'$ be a homomorphism  between linear algebraic groups. Let
$\gamma \in H(F)$ and let $\gamma'\in H'(F)$ denote its image under $\varphi$. 
Then $\nu_{\gamma'}=\varphi \circ \nu_\gamma$. 
\end{lemma}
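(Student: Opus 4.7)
The plan is to verify the identity at the Tannakian level and conclude by the standard reconstruction dictionary. Both $\nu_{\gamma'}$ and $\varphi \circ \nu_\gamma$ are $F$-homomorphisms $\mathbb D \to H'$, so it suffices to check that they induce the same $\otimes$-functor $\Rep H' \to \Rep \mathbb D$ compatible with the canonical fiber functors.

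First I would fix $V' \in \Rep H'$ and let $V := \varphi^* V'$ denote its restriction along $\varphi$. By construction the automorphism $\gamma_V$ of $V$ is literally the same linear transformation as $\gamma'_{V'}$, since $\gamma' = \varphi(\gamma)$ acts through the representation $H' \to \mathrm{GL}(V')$ which factors through $H \to H' \to \mathrm{GL}(V')$. Consequently the characteristic polynomials, their slope factorizations, and hence the slope decompositions agree: the grading $V = \bigoplus_a V_a$ produced from $\gamma$ coincides with the grading $V' = \bigoplus_a V'_a$ produced from $\gamma'$.

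Next I would use the characterization of the Newton homomorphism recalled just before the lemma. For any $F$-algebra $R$ and any $d \in \mathbb D(R)$, the automorphism $\nu_{\gamma'}(d)_{V'}$ acts by $d_a$ on $(V'_a)_R$; on the other hand $(\varphi \circ \nu_\gamma)(d)$ acts on $V'$ through $\nu_\gamma(d)$ acting on $\varphi^* V' = V$, which by construction acts by $d_a$ on $(V_a)_R = (V'_a)_R$. Thus the two $\otimes$-functors $\Rep H' \to \Rep \mathbb D$ attached to $\nu_{\gamma'}$ and to $\varphi \circ \nu_\gamma$ agree on objects and on morphisms, strictly compatibly with the forgetful functors. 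Tannakian duality then yields $\nu_{\gamma'} = \varphi \circ \nu_\gamma$.

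There is no real obstacle here: the entire content is the functoriality of the slope decomposition under restriction along $\varphi$, together with the fact that the Tannakian recipe used to define $\nu_\gamma$ is itself functorial. The only point that deserves a line of care is to ensure that the assignment $V \mapsto V_a$ is natural in morphisms of $H'$-representations (since then the agreement of $\otimes$-functors is automatic), but this is immediate from the fact that $V_a$ is defined as $\ker f_a(\gamma_V)$, which is preserved by any $H$-equivariant map.
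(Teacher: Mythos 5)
Your proof is correct and is the natural unwinding of the Tannakian definition of the Newton homomorphism; the paper leaves this lemma as an exercise (its proof is simply the word ``Easy''), and the argument you have written out is precisely the expected one. The key observation — that for $V'\in\Rep H'$ the automorphism $\gamma_{\varphi^*V'}$ coincides with $\gamma'_{V'}$, so the slope gradings match, and hence the two $\otimes$-functors $\Rep H'\to\Rep\mathbb D$ agree — is exactly right, and the reduction to checking this on each object is legitimate because the gradings are by kernels of equivariant operators and hence are functorial.
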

\begin{proof}
Easy.
\end{proof}
 \begin{lemma}\label{lem.con}
Let $\gamma,\delta \in H(F)$. Let $R$ be any $F$-algebra and suppose that $h \in
H(R)$ satisfies $h\gamma h^{-1}=\delta$. Then $\Int(h) \circ \nu_\gamma
=\nu_\delta$, both sides of this equality being viewed as $R$-homomorphisms from
$\mathbb D$ to $H$. 
\end{lemma}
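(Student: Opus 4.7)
The plan is to appeal to the Tannakian characterization of the Newton homomorphism recalled just before the lemma: two $R$-homomorphisms $\mathbb{D}_R \to H_R$ agree precisely when, for every $V \in \Rep H$, they induce the same $\mathbb{Q}$-grading on $V_R := V \otimes_F R$. I would therefore reduce the problem to showing, for each $V$, that the two resulting gradings on $V_R$ coincide. The grading on $V_R$ coming from $\nu_\delta$ (base-changed to $R$) is $V_R = \bigoplus_a (V_a^\delta)_R$, where $V_a^\delta = \ker f_a^\delta(\delta_V)$ arises from the slope factorization $f^\delta = \prod_a f_a^\delta$ of the characteristic polynomial of $\delta_V$. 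The grading coming from $\Int(h) \circ \nu_\gamma$ is obtained from the $\nu_\gamma$-grading $V_R = \bigoplus_a (V_a^\gamma)_R$ by transport via the automorphism $h_V$ of $V_R$, giving $V_R = \bigoplus_a h_V \cdot (V_a^\gamma)_R$. The key assertion to prove is thus
\[
h_V \cdot (V_a^\gamma)_R = (V_a^\delta)_R \quad\text{for every } a \in \mathbb{Q}.
\]

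To establish this, I apply the representation $V$ to the relation $h \gamma h^{-1} = \delta$, obtaining $h_V \cdot (\gamma_V)_R \cdot h_V^{-1} = (\delta_V)_R$ as automorphisms of $V_R$. We may assume $R \neq 0$, so $F[x] \hookrightarrow R[x]$ is injective; since $(\gamma_V)_R$ and $(\delta_V)_R$ are conjugate over $R$, they have equal characteristic polynomials, forcing $\gamma_V$ and $\delta_V$ to share the same characteristic polynomial over $F$ and hence the same slope factorization $\prod_a f_a$. Then $f_a((\delta_V)_R) = h_V \cdot f_a((\gamma_V)_R) \cdot h_V^{-1}$, and because taking kernels commutes with the flat base change $F \to R$,
\[
(V_a^\delta)_R = \ker f_a((\delta_V)_R) = h_V \cdot \ker f_a((\gamma_V)_R) = h_V \cdot (V_a^\gamma)_R,
\]
exactly as required.

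I do not anticipate a genuine obstacle; the content is simply that an inner automorphism transports the slope decomposition of an invertible endomorphism to that of any conjugate, and the Tannakian criterion then packages this into the desired equality of homomorphisms $\mathbb{D}_R \to H_R$. The only care needed is base-change bookkeeping, in particular translating the identity in $H(R)$ into the slope filtration on $V \otimes_F R$ and applying the Tannakian criterion with $R'$-valued points of $\mathbb{D}$ for arbitrary $R$-algebras $R'$.
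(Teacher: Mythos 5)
Your argument is correct and follows the same route as the paper's proof: work one representation $V$ at a time, note that $h\gamma h^{-1}=\delta$ forces $\gamma_V$ and $\delta_V$ to have the same characteristic polynomial over $F$ (using $R\neq 0$ so that $F$ injects into $R$), use flatness of $R$ over $F$ to base-change the slope decomposition, and observe that conjugation by $h_V$ carries one slope decomposition into the other. The Tannakian reformulation you lead with is exactly the mechanism the paper uses implicitly in its final sentence.
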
 
\begin{proof}
It is harmless to assume that $R$ is nonzero, in which case $F$ injects into $R$.
Consider $V$ in $\Rep H$. The equality $h\gamma h^{-1}=\delta$ implies that the
characteristic polynomials of $\gamma_V$ and $\delta_V$ are equal. By flatness of
$R$ over $F$ the slope decomposition of $V_R$ with respect to $\gamma_V$ is 
\[
V_R=\bigoplus_{a \in \mathbb Q} \ker[f_a(\gamma_V):V_R \to V_R] 
\] 
and a similar statement holds for $\delta$. The equality $h_V\gamma_V
h_V^{-1}=\delta_V$ then implies that $h_V$ transforms the slope decomposition for
$V_R$ with respect to $\gamma_V$ into the one with respect to $\delta_V$. Since
this is true for all $V$ in $\Rep H$, we conclude that $\Int(h) \circ \nu_\gamma
=\nu_\delta$. 
\end{proof} 

\begin{corollary}\label{cor.gaminM}The centralizer $H_\gamma$ of $\gamma$ in $H$ is
contained in the centralizer  $M$ of $\nu_\gamma$ in $H$. In particular $\gamma$
lies in
$M(F)$. 
\end{corollary}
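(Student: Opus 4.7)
The plan is to derive this directly from Lemma \ref{lem.con} by specializing $\delta=\gamma$. The centralizers involved should be thought of as functors on $F$-algebras: for an $F$-algebra $R$, the group $H_\gamma(R)$ consists of those $h\in H(R)$ for which $h\gamma h^{-1}=\gamma$ (with $\gamma$ regarded in $H(R)$ via $F\hookrightarrow R$), and $M(R)$ consists of those $h\in H(R)$ for which $\Int(h)$ fixes $\nu_\gamma$ pointwise, i.e., $h\nu_\gamma(d)h^{-1}=\nu_\gamma(d)$ for every $R$-algebra $R'$ and every $d\in\mathbb D(R')$.

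Given any $h\in H_\gamma(R)$, apply Lemma \ref{lem.con} with $\delta=\gamma$: the hypothesis $h\gamma h^{-1}=\gamma$ yields $\Int(h)\circ\nu_\gamma=\nu_\gamma$ as $R$-homomorphisms $\mathbb D\to H$. This is exactly the assertion that $h\in M(R)$. Since this holds functorially in $R$, the inclusion $H_\gamma\subset M$ holds at the level of $F$-group schemes.

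For the final assertion, $\gamma$ trivially lies in $H_\gamma(F)$ since it centralizes itself, so the inclusion just established gives $\gamma\in M(F)$.

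The entire argument is essentially a formal consequence of the Tannakian naturality encoded in Lemma \ref{lem.con}, so there is no real obstacle; the only thing to be careful about is interpreting $H_\gamma$ and $M$ as functors (or as closed subschemes) and checking that the equality $\Int(h)\circ\nu_\gamma=\nu_\gamma$ is precisely the defining condition for membership in $M$.
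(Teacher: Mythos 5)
Your proof is correct and is precisely what the paper's laconic ``Clear.'' is gesturing at: Lemma \ref{lem.con} with $\delta=\gamma$, read functorially in the $F$-algebra $R$, identifies $H_\gamma(R)$ as a subset of $M(R)$, and $\gamma\in H_\gamma(F)$ gives the final assertion. Nothing is missing.
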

\begin{proof}
Clear.
\end{proof} 

\begin{definition}
Say that $\gamma \in H(F)$ is \emph{basic}  if its Newton homomorphism
$\nu_\gamma:\mathbb D \to H$ factors through the center of $H$. 
\end{definition} 

\begin{remark}
With notation as in the previous corollary, the Newton homomorphism is central in
$M$, and therefore $\gamma$ is basic in $M(F)$. 
\end{remark} 

\subsection{Newton homomorphisms for tori} Let $T$ be a torus over $F$. We may
restrict the homomorphism 
\begin{equation*}
T(\bar F)=X_*(T)\otimes_\mathbb Z \bar F^\times \xrightarrow{\id\otimes \val}
X_*(T)\otimes_\mathbb Z \mathbb Q
\end{equation*}
to $T(F)$, obtaining a homomorphism 
\begin{equation}\label{eq.homT}
T( F) \rightarrow
X_*(A_T)_\mathbb Q,
\end{equation} 
where $A_T$ is the maximal split subtorus of $T$. 
\begin{proposition}\label{prop.torusnu}
Let $\gamma \in T(F)$. Then $\nu_\gamma \in \Hom(\mathbb D,T)=X_*(A_T)_\mathbb Q$
is the image of $\gamma$ under the homomorphism \eqref{eq.homT}. 
\end{proposition}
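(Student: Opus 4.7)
The plan is to reduce to the case $T = \mathbb{G}_m$ by base change to $\bar F$ and composition with characters.

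First, I would note that both $\nu_\gamma$ and the image $\nu'$ of $\gamma$ under \eqref{eq.homT} are $F$-homomorphisms $\mathbb D \to T$ (the latter lands in $\Hom_F(\mathbb D, A_T) \subseteq \Hom_F(\mathbb D, T)$). Both constructions commute with extension of scalars: for $\nu_\gamma$ this is because the slope factorization of a characteristic polynomial, and hence the slope decomposition of an invertible endomorphism, commutes with flat base change of the ground field. Since $\Hom_F(\mathbb D, T) \hookrightarrow \Hom_{\bar F}(\mathbb D, T_{\bar F})$, it therefore suffices to prove $\nu_\gamma = \nu'$ over $\bar F$, where $T$ splits. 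A $\bar F$-homomorphism $\mathbb D \to T_{\bar F}$ is determined by its compositions with all $\chi \in X^*(T_{\bar F})$, and by Lemma~\ref{lem.phinu} we have $\chi \circ \nu_\gamma = \nu_{\chi(\gamma)}$. This reduces the question to the case $T = \mathbb G_m$.

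For $T = \mathbb G_m$, with $\gamma \in F^\times$, the standard one-dimensional representation has $\gamma$ acting by multiplication by itself. Its characteristic polynomial $x - \gamma$ is its own slope factor of slope $\val(\gamma)$, so the slope decomposition is concentrated in slope $\val(\gamma)$. By the defining property of the Newton homomorphism, $\nu_\gamma$ is then the cocharacter $\val(\gamma) \in \mathbb Q = X_*(\mathbb G_m)_{\mathbb Q}$, which is exactly the image of $\gamma$ under the map $F^\times \xrightarrow{\val} \mathbb Q$ of \eqref{eq.homT}.

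It remains to check that the image $\nu'$ matches $\nu_\gamma$ character by character as well. Writing $\gamma = \sum \lambda_i \otimes t_i$ inside $T(\bar F) = X_*(T) \otimes \bar F^\times$, one computes $\chi(\gamma) = \prod t_i^{\langle \chi, \lambda_i\rangle}$, whence $\val(\chi(\gamma)) = \langle \chi, \sum \val(t_i) \lambda_i \rangle = \langle \chi, \nu'\rangle$. Combined with the previous paragraph this gives $\chi \circ \nu_\gamma = \chi \circ \nu'$ for every $\chi \in X^*(T_{\bar F})$, so $\nu_\gamma = \nu'$ over $\bar F$ and hence over $F$. The only real thing to keep track of is the base change to $\bar F$ together with the inclusion $X_*(A_T) \hookrightarrow X_*(T)$; the core computation is essentially tautological once the case $T = \mathbb G_m$ is in hand.
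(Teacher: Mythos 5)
Your proof is correct, but it takes a genuinely different route from the paper's. The paper stays over $F$ throughout: in the split case the result is read off directly from the Tannakian definition, and in general one composes with the projection from $T$ onto its biggest \emph{split quotient} and invokes Lemma~\ref{lem.phinu}. You instead base-change to $\bar F$, where $T$ splits, and then test against all $\chi \in X^*(T_{\bar F})$, reducing to $T = \mathbb G_m$. The price of your route is that once you work over $\bar F$ you are tacitly using a version of the Newton homomorphism (and of Lemma~\ref{lem.phinu}) over the valued field $\bar F$, together with the compatibility of $\nu_\gamma$ with that base change; you do flag the latter point, but both are extra steps the paper's reduction avoids. You could eliminate them entirely by observing that $\nu_\gamma$ and $\nu'$ both lie in $X_*(A_T)_{\mathbb Q}$, which pairs perfectly with $X^*(T)^\Gamma_{\mathbb Q}$, so it suffices to test against Galois-invariant characters $\chi \in X^*(T)^\Gamma$; these are defined over $F$, so Lemma~\ref{lem.phinu} applies as stated and your $\mathbb G_m$ computation runs entirely over $F$. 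Since $X^*(T)^\Gamma$ is precisely the character lattice of the maximal split quotient of $T$, that streamlined variant is essentially the paper's argument, now made fully explicit by your $\mathbb G_m$ calculation (which is what the paper glosses as ``follows directly from the Tannakian definition'' in the split case).
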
 
\begin{proof}
When $T$ is split, the proposition follows directly from the Tannakian definition
of Newton homomorphisms. In general one uses the homomorphism from $T$ to its
biggest split quotient in order to reduce to the split case. Of course one must
appeal to Lemma \ref{lem.phinu}. 
\end{proof}

\subsection{Newton points} 
Now assume that $H$ is connected and reductive. Choose a maximal $F$-split torus
$A$ in $H$. The group $\Omega:=\Norm_{H(F)} A/\Cent_{H(F)}A$ then acts on
$X_*(A)_\mathbb Q=\Hom(\mathbb D,A)$, and the natural map $\Hom(\mathbb D,A) \to
\Hom (\mathbb D,H)$ induces a bijection from the set of $\Omega$-orbits in
$X_*(A)_\mathbb Q$ to the set of $H(F)$-conjugacy classes of homomorphisms
$\mathbb D \to H$.  Lemma \ref{lem.con} shows that
 the $H(F)$-conjugacy class of
$\nu_\gamma$ depends only on the $H(F)$-conjugacy class of $\gamma$ and yields a
well-defined orbit of $\Omega$ in $X_*(A)_\mathbb Q$, the \emph{Newton point}
$[\nu_\gamma]$ of (the $H(F)$-conjugacy class of) $\gamma$.

\section{Group-theoretic linear Hodge-Newton decompositions}

 The  purpose of this
section is to prove a linear (as opposed to $\sigma$-linear) version of the
group-theoretic generalization  (see \cite[Remark 4.12]{kottwitz-rapoport02} and
\cite{kottwitz03,viehmann08}) of  Katz's  Hodge-Newton decomposition
\cite{katz79}. In the case of $GL_n$ (the situation considered by Katz, though his
paper is written in the language of linear algebra rather than that of group
theory) we generalize the result in a different direction, treating all
endomorphisms of a given finite dimensional vector space, not just the invertible
ones. This added generality will be needed later, when we apply our linear
Hodge-Newton decomposition to  endomorphisms of the form  
$\ad(u)$.

\subsection{Notation pertaining to split $G$}\label{sub.gp-not}  
For the rest of this section we let $G$ be a split
connected reductive group over $\mathfrak o$ and let $A$ be a split maximal torus
of~$G$ over~$\mathfrak o$.  Fix a Borel subgroup $B=AU$ containing $A$ with
unipotent radical~$U$. As usual, by a standard  parabolic subgroup~$P$ of $G$ we
mean one  containing~$B$, and we write 
$P=MN$, where $M$ is the unique Levi subgroup of~$P$ containing $A$ and $N$ is the
unipotent radical of~$P$. 

Taking scheme theoretic closures, we  obtain natural $\mathfrak o$-structures on
$B,U,P,M,N$ for which $B$ is a Borel subscheme of $G$ over $\mathfrak o$, $M$ is
a split connected reductive group scheme over $\mathfrak o$, and so on. A fact that
will be useful later is that
$P$ is the semidirect product of
$M$ and $N$ over $\mathfrak o$, so that in particular we have $P(\mathfrak
o)=M(\mathfrak o)N(\mathfrak o)$. 

We write $\Lambda_G$ for the 	quotient of $X_*(A)$ by the coroot lattice for $G$,
and we write $p_G$ for the canonical surjection $X_*(A) \twoheadrightarrow
\Lambda_G$.   We 
recall that there is   a natural surjective homomorphism $w_G:G(F)
\twoheadrightarrow 
\Lambda_G$, which can be defined as follows. For
$g\in G(F)$  we define
$r_B(g)
\in X_*(A)$ to be the unique element
$\mu \in X_*(A)$ such that $g \in G(\mathfrak o)  \mu(\varpi)  U(F)$,
and we define
$w_G(g)$ to be the image of $r_B(g)$ under the canonical surjection $p_G$. 

Applying the construction above to $M$ rather than $G$, we obtain $\Lambda_M$, the
quotient of $X_*(A)$ by the coroot lattice for~$M$, and  homomorphisms $p_M:X_*(A)
\twoheadrightarrow \Lambda_M$ and  
$
w_M:M(F)
\twoheadrightarrow  \Lambda_M.
$ 
For
$\mu,\nu \in \Lambda_M$ we write $\mu\overset{P}\le\nu$ if $\nu -\mu$ 
is a non-negative
integral linear combination of 
(images in~$\Lambda_M$ of) coroots $\alpha^\vee$, where
$\alpha$ ranges over the roots of
$A$ in~$N$.

\subsection{Newton points in the split case}
In what follows we will be using Newton points for our
split connected reductive group $G$. Then the Newton point $[\nu_\gamma]$ can be
viewed either as as a Weyl group orbit in $X_*(A)_\mathbb Q$, or as a dominant
element in
$X_*(A)_\mathbb Q$. We will use the two points of view interchangeably.

\begin{example}\label{example}
Suppose that our split group is actually a split torus $A$. Then the Newton
homomorphism $\nu_\gamma \in \Hom(\mathbb D,A)=X_*(A)_\mathbb Q$ is equal to the
image of $w_A(\gamma) \in X_*(A)$ under the canonical inclusion $X_*(A)
\hookrightarrow X_*(A)_\mathbb Q$. 
\end{example}

In general there is still some relationship between $w_G(\gamma)$ and
$\nu_\gamma$, as we will now see. For this we need the $\mathbb Q$-linear map 
\begin{equation}\label{eq.hom11}
X_*(A)_\mathbb Q \twoheadrightarrow \Lambda_G \otimes_\mathbb Z \mathbb Q
\end{equation} 
induced by the canonical surjection $X_*(A)\twoheadrightarrow \Lambda_G$, as well
as the canonical homomorphism 
\begin{equation}\label{eq.hom22}
\Lambda_G \to \Lambda_G \otimes_\mathbb Z \mathbb Q.
\end{equation}
\begin{proposition}\label{prop.imnu} 
Let $\gamma \in G(F)$. Then the image of $[\nu_\gamma]$ under \eqref{eq.hom11} is
equal to the image of $w_G(\gamma)$ under \eqref{eq.hom22}. 
\end{proposition}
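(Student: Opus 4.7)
The plan is to reduce everything to the torus case by pushing $\gamma$ to the cocenter. Let $T := G/G_{\mathrm{der}}$; since $G$ is split so is $T$, and the composition $A \hookrightarrow G \twoheadrightarrow T$ induces an isomorphism $X_*(T) \cong \Lambda_G$ (the kernel of $X_*(A) \to X_*(T)$ is precisely the coroot lattice). Denote the projection by $\varphi : G \to T$, and write $\gamma' := \varphi(\gamma) \in T(F)$.

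The first step is to apply Lemma \ref{lem.phinu} to $\varphi$, which gives $\nu_{\gamma'} = \varphi \circ \nu_\gamma$. Under the identification $X_*(T)_{\mathbb Q} = \Lambda_G \otimes \mathbb Q$, the homomorphism on cocharacters induced by $\varphi|_A$ is exactly the surjection \eqref{eq.hom11}. Since this map factors through the $W$-coinvariants (the coroot lattice is $W$-stable), it is well defined on the $W$-orbit $[\nu_\gamma]$, and its output is the image of $[\nu_\gamma]$ under \eqref{eq.hom11}. So it suffices to show that $\nu_{\gamma'} \in \Lambda_G \otimes \mathbb Q$ coincides with the image of $w_G(\gamma)$ under \eqref{eq.hom22}.

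The second step invokes Proposition \ref{prop.torusnu} (equivalently Example \ref{example}) applied to the split torus $T$: $\nu_{\gamma'}$ is the image of $\gamma'$ under the valuation map $T(F) \to X_*(T)_{\mathbb Q}$. To match this with $w_G(\gamma)$, unwind the Iwasawa recipe: writing $\gamma = k\,\mu(\varpi)\,u$ with $k \in G(\mathfrak o)$, $\mu = r_B(\gamma) \in X_*(A)$, and $u \in U(F)$, we have $w_G(\gamma) = p_G(\mu)$. Applying $\varphi$ we get $\gamma' = \varphi(k)\,\bar\mu(\varpi)$, where $\bar\mu = p_G(\mu) \in X_*(T)$ and $\varphi(u) = 1$ because $U \subset G_{\mathrm{der}}$. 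Since $\varphi(k) \in T(\mathfrak o)$ has trivial valuation, the image of $\gamma'$ in $X_*(T)_{\mathbb Q}$ is $\bar\mu = p_G(\mu) = w_G(\gamma)$. Combining the two steps yields the proposition.

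There is no real obstacle; the content is purely bookkeeping to identify the abelianization homomorphism on cocharacters with \eqref{eq.hom11} and to check that the valuation of the cocenter image of $\gamma$ computes $w_G(\gamma)$. The only point worth care is that \eqref{eq.hom11} is well defined on Weyl orbits, which is automatic since $\Lambda_G$ is killed by the coroots.
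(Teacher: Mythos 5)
Your approach is the same as the paper's: push $\gamma$ to the cocenter torus $C := G/G_{\mathrm{der}}$ using the functoriality of Newton homomorphisms (Lemma \ref{lem.phinu}), and then invoke the torus case (Example \ref{example}). The Iwasawa unwinding you carry out is precisely the verification the paper compresses into the assertion that its $2\times 2$ diagram commutes.

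The one genuine problem is the claim that $A \hookrightarrow G \twoheadrightarrow T$ induces an isomorphism $X_*(T) \cong \Lambda_G$, justified by the parenthetical assertion that $\ker\bigl(X_*(A) \to X_*(T)\bigr)$ is the coroot lattice. This is false in general: the kernel is $X_*(A \cap G_{\mathrm{der}})$, which contains the coroot lattice $Q^\vee$ with quotient $\pi_1(G_{\mathrm{der}})$; they differ whenever $G_{\mathrm{der}}$ is not simply connected. For example, with $G = PGL_2$ one has $T = 1$, hence $X_*(T) = 0$, while $\Lambda_G \cong \mathbb{Z}/2\mathbb{Z}$. Consequently the later equation $\bar\mu = p_G(\mu) \in X_*(T)$ is not literally correct either. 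What rescues the argument is that the natural map $\Lambda_G \to X_*(T)$ is surjective with finite kernel, hence becomes an isomorphism after $\otimes\,\mathbb{Q}$, and your proof only ever uses the identification $X_*(T)_{\mathbb{Q}} \cong \Lambda_G \otimes \mathbb{Q}$ and the equality $\bar\mu = p_G(\mu)$ rationally. That is also all the Proposition requires, since its statement compares images in $\Lambda_G \otimes \mathbb{Q}$. So the proof goes through once you systematically replace the integral identification with the rational one.
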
 
\begin{proof}
Let $C$ be the split torus obtained as the quotient of $G$ by its derived group.
The functoriality of Newton homomorphisms (Lemma \ref{lem.phinu}), together with
the commutativity of the diagram 
\begin{equation*}
\begin{CD}
G(F) @>>> C(F) \\
@V{w_G}VV @V{w_C}VV \\
\Lambda_G @>>> \Lambda_C
\end{CD}
\end{equation*}
reduces us to the case in which our group is a split torus, and this has already
been discussed in Example \ref{example}. 
\end{proof} 
\begin{corollary}\label{cor.bnew} 
Assume that $\gamma \in G(F)$ is basic, so that
its Newton homomorphism
$\nu_\gamma$ lies in $\Hom(\mathbb D,A_G)$, where $A_G$ is the identity component
of the center of $G$. Then under the canonical isomorphism 
\[
\Hom(\mathbb D,A_G)\simeq X_*(A_G)_\mathbb Q \simeq \Lambda_G \otimes \mathbb Q 
\] 
$\nu_\gamma$ corresponds to the image of $w_G(\gamma)$ under \eqref{eq.hom22}.
Thus, in the basic case, $\nu_\gamma$ is uniquely determined by $w_G(\gamma)$. 
\end{corollary}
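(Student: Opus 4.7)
The plan is to deduce this as an essentially formal consequence of Proposition \ref{prop.imnu}, the only extra ingredient being an identification of the canonical isomorphism $X_*(A_G)_{\mathbb Q} \simeq \Lambda_G\otimes\mathbb Q$ in terms of the inclusion $A_G \hookrightarrow A$ and the projection $p_G\colon X_*(A)\twoheadrightarrow\Lambda_G$.

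First I would spell out the canonical isomorphism. The inclusion $A_G\hookrightarrow A$ induces $X_*(A_G)_{\mathbb Q}\hookrightarrow X_*(A)_{\mathbb Q}$, and composing with the $\mathbb Q$-linear surjection \eqref{eq.hom11} gives a map $X_*(A_G)_{\mathbb Q}\to\Lambda_G\otimes\mathbb Q$. This map is an isomorphism because the coroot lattice of $G$ spans (over $\mathbb Q$) a complement to $X_*(A_G)_{\mathbb Q}$ inside $X_*(A)_{\mathbb Q}$ (the coroots pair trivially with characters of $A_G$ and together with $X_*(A_G)_{\mathbb Q}$ account for the full dimension of $X_*(A)_{\mathbb Q}$). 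This isomorphism is precisely the one appearing in the statement of the corollary, once one identifies $\Hom(\mathbb D,A_G)$ with $X_*(A_G)_{\mathbb Q}$ in the standard way via the Tannakian description of $\mathbb D$.

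Next, since $\gamma$ is basic, $\nu_\gamma$ factors through $A_G$, so $\nu_\gamma$ can be viewed as an element of $X_*(A_G)_{\mathbb Q}\subset X_*(A)_{\mathbb Q}$; in particular the Weyl group orbit $[\nu_\gamma]$ is just the single point $\nu_\gamma$ itself. Applying Proposition \ref{prop.imnu}, the image of $\nu_\gamma$ under \eqref{eq.hom11} equals the image of $w_G(\gamma)$ in $\Lambda_G\otimes\mathbb Q$. But by the previous paragraph, the restriction of \eqref{eq.hom11} to $X_*(A_G)_{\mathbb Q}$ is exactly the canonical isomorphism $X_*(A_G)_{\mathbb Q}\simeq\Lambda_G\otimes\mathbb Q$, so transporting back under this isomorphism identifies $\nu_\gamma$ with the image of $w_G(\gamma)$, as claimed. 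The final assertion (that $\nu_\gamma$ is determined by $w_G(\gamma)$) follows at once since $\Lambda_G\to\Lambda_G\otimes\mathbb Q$ depends only on $w_G(\gamma)$.

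There is no real obstacle here; the only thing to be careful about is to verify that the ``canonical isomorphism'' written in the statement of the corollary truly coincides with the composition induced by $A_G\hookrightarrow A$ and $p_G$, so that Proposition \ref{prop.imnu} can be applied directly. Once this bookkeeping is in place the corollary is immediate.
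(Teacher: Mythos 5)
Your proposal is correct, and it fills in exactly the bookkeeping that the paper leaves implicit when it writes ``Clear'': restricting the map \eqref{eq.hom11} to $X_*(A_G)_{\mathbb Q}$ recovers the canonical isomorphism, and Proposition \ref{prop.imnu} then gives the claim directly. This is the intended route.
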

\begin{proof}
Clear. 
\end{proof} 

\begin{proposition}
Let $\gamma \in G(F)$ and let $\gamma_s \in G(\bar F)$ denote semisimple part of
the Jordan decomposition of $\gamma$. Choose $a \in A(\bar F)$ such that $a$ is
conjugate to $\gamma_s$ in $G(\bar F)$. The Weyl group orbit of $a$ is then
uniquely determined by $\gamma$, and the Newton point of $\gamma$ is the Weyl group
orbit of the image of $a$ under 
\begin{equation*}
A(\bar F)=X_*(A)\otimes_\mathbb Z \bar F^\times \xrightarrow{\id\otimes \val}
X_*(A)\otimes_\mathbb Z \mathbb Q.
\end{equation*}
\end{proposition}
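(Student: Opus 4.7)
The plan is to prove both assertions by reducing in stages. The semisimple part $\gamma_s$ is uniquely determined by $\gamma$ (uniqueness of Jordan decomposition), so the $G(\bar F)$-conjugacy class of $a$ depends only on $\gamma$. Two elements of $A(\bar F)$ that are conjugate in $G(\bar F)$ are already conjugate by an element of $\Norm_G(A)(\bar F)$ --- this is the standard fact that any two maximal tori containing a given semisimple element are conjugate by an element of its centralizer --- so the $W$-orbit of $a$ is well-defined. This disposes of the first assertion.

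For the Newton point, I would first argue that $\nu_\gamma$, viewed over $\bar F$, agrees with the Newton homomorphism $\nu_{\gamma_s}$ obtained by applying the Tannakian recipe to $\gamma_s \in G(\bar F)$. The key observation is that for any $V \in \Rep G$, the decomposition $\gamma_V = (\gamma_s)_V (\gamma_u)_V$ is the additive/multiplicative Jordan decomposition of the endomorphism $\gamma_V$ over $\bar F$, so $\gamma_V$ and $(\gamma_s)_V$ have the same characteristic polynomial. Since the slope factorization and slope decomposition depend only on the characteristic polynomial, and base change from $F$ to $\bar F$ is flat (so the slope decomposition of $V$ base-changes to that of $V_{\bar F}$), the Tannakian data defining $\nu_\gamma$ and $\nu_{\gamma_s}$ coincide over $\bar F$.

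Next, pick $h \in G(\bar F)$ with $h\gamma_s h^{-1}=a$. By Lemma~\ref{lem.con} applied with $R=\bar F$, one has $\Int(h)\circ \nu_{\gamma_s}=\nu_a$. It remains to compute $\nu_a$ for $a \in A(\bar F)$. Using the weight decomposition $V = \bigoplus_{\chi \in X^*(A)}V_\chi$, the element $a$ acts on $V_{\bar F,\chi}$ by the scalar $\chi(a) \in \bar F^\times$, whose valuation equals $\langle \chi, \lambda \rangle$ for $\lambda := (\id \otimes \val)(a) \in X_*(A)_{\mathbb Q}$. Thus the slope decomposition of $V_{\bar F}$ with respect to $a$ is precisely the grading induced by $\lambda$ viewed as a cocharacter $\mathbb D \to A$; this is a direct extension of Proposition~\ref{prop.torusnu} from $T(F)$ to $A(\bar F)$. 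Combining these steps, $\nu_\gamma$ is $G(\bar F)$-conjugate to the cocharacter $\lambda \in X_*(A) \otimes \mathbb Q \hookrightarrow \Hom(\mathbb D, G)$, so the Newton point (a $W$-orbit in $X_*(A)_{\mathbb Q}$, since $A$ is $F$-split and $\Omega = W$) is the $W$-orbit of $\lambda$, as claimed.

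The only step that requires any care is the passage from $\gamma$ to $\gamma_s$, because $\gamma_s$ lies in $G(\bar F)$ rather than $G(F)$ and one must check that the Tannakian/slope machinery interacts correctly with base change; the other steps are formal consequences of the two lemmas already proved. Once the identification $\nu_\gamma = \nu_{\gamma_s}$ over $\bar F$ is in hand, the computation of $\nu_a$ for $a$ in a split torus is essentially the content of Example~\ref{example} and Proposition~\ref{prop.torusnu}.
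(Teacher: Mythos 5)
Your approach is close to the paper's, with one genuine simplification and one small slip worth noting.

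The paper first reduces to $\gamma$ semisimple, treating characteristic $0$ (Jordan decomposition is defined over $F$) and characteristic $p$ (pass to $\gamma^m$ for $m$ a large power of $p$, using $\nu_{\gamma^m}=m\nu_\gamma$) separately, and then chooses a maximal $F$-torus $T$ containing the now-semisimple $\gamma$ and appeals to Proposition~\ref{prop.torusnu}. You instead extend the Tannakian construction of the Newton homomorphism to $\gamma_s \in G(\bar F)$ and work uniformly over $\bar F$, which neatly avoids the characteristic case split; you also make explicit the conjugation $h\gamma_s h^{-1}=a$ and the appeal to Lemma~\ref{lem.con} over $R=\bar F$, which is only implicit in the paper's terse final sentence. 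Both routes ultimately compute the Newton point of a torus element via the valuation of its eigenvalues.

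The one misstep is the sentence asserting that ``the slope factorization and slope decomposition depend only on the characteristic polynomial.'' The slope factorization does, but the slope decomposition $V_a=\ker[f_a(T)]$ depends on the operator, not merely its characteristic polynomial: two operators with the same characteristic polynomial can have different generalized eigenspaces. What you actually need, and what your own setup already delivers, is the sharper consequence of the Jordan decomposition over $\bar F$: since $(\gamma_u)_V$ is unipotent and commutes with $(\gamma_s)_V$, the endomorphisms $\gamma_V$ and $(\gamma_s)_V$ have \emph{identical generalized eigenspaces} on $V_{\bar F}$, not just the same characteristic polynomial, and therefore the same slope decomposition. With that phrasing the reduction from $\gamma$ to $\gamma_s$ is airtight. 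The rest of the argument --- the well-definedness of the $W$-orbit via $\Norm_G(A)$, the application of Lemma~\ref{lem.con} over $\bar F$, and the direct computation of $\nu_a$ from the weight decomposition --- is correct.
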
 
\begin{proof}
We first reduce to the case in which $\gamma$ is semisimple. In characteristic
$0$, the Jordan decomposition of $\gamma$ is defined over $F$, and it is evident
from the definition of $\nu_\gamma$ that $\nu_{\gamma_s}=\nu_\gamma$. In
characteristic $p$ we use instead the fact that $\gamma^m$ is semisimple when $m$
is a sufficiently big power of the prime $p$, it being again evident that 
$\nu_{\gamma^m}=m\nu_\gamma$. 

Now assume that $\gamma$ is semisimple. Choose a maximal $F$-torus $T$ of $G$
containing $\gamma$. Then the proposition follows from Proposition
\ref{prop.torusnu}. 
\end{proof}

\subsection{Linear Hodge-Newton decomposition}
For any coweight $\mu \in X_*(A)$ (usually taken to be dominant) and any $\gamma
\in G(F)$ we consider the  set 
\begin{equation}\label{aDL}
 X^G_{\mu}(\gamma):=\{ x \in G(F)/G(\mathfrak o) : x^{-1}\gamma x \in G(\mathfrak
o)\mu(\varpi)G(\mathfrak o) \}.
\end{equation}

\begin{theorem} \label{linHNthm}
 Let $\mu \in X_*(A)$ be a dominant coweight, let $P=MN$ be a standard parabolic
subgroup, and let
$\gamma
\in M(F)$. Then the following two conclusions hold: 
\begin{enumerate}
\item  If $X^G_{\mu}(\gamma)$ is non-empty, then $w_M(\gamma)
\overset{P} \le p_M(\mu)$. 
\item Suppose that $w_M(\gamma)=p_M(\mu)$ and that every slope of $\Ad(\gamma)$ on
$\Lie N$ is strictly positive.  Then the natural injection
$X^M_{\mu}(\gamma)
\hookrightarrow X^G_{\mu}(\gamma)$ is a bijection. 
\end{enumerate}  
\end{theorem}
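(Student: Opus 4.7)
The strategy for both parts is to apply the Iwasawa decomposition $G(F) = P(F) G(\mathfrak o)$, so that any $x$ realizing the non-emptiness of $X^G_\mu(\gamma)$ may be written, modulo $G(\mathfrak o)$, as $x = mn$ with $m \in M(F)$ and $n \in N(F)$. Using that $M$ normalizes $N$,
\[
x^{-1}\gamma x = n^{-1}(m^{-1}\gamma m) n = \gamma' \cdot \tilde n,
\]
where $\gamma' := m^{-1}\gamma m \in M(F)$ and $\tilde n := \Ad(\gamma'^{-1})(n^{-1}) \cdot n \in N(F)$; note that $w_M(\gamma') = w_M(\gamma)$ by $M(F)$-conjugation invariance of $w_M$. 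Everything then hinges on a structural lemma, which I would extract from standard arguments on the affine Grassmannian: for every $y \in P(F) \cap G(\mathfrak o)\mu(\varpi) G(\mathfrak o)$ written as $y = m_1 n_1$ with $m_1 \in M(F)$ and $n_1 \in N(F)$, one has $w_M(m_1) \overset{P}{\le} p_M(\mu)$; moreover, in the boundary case $w_M(m_1) = p_M(\mu)$ one gets $m_1 \in M(\mathfrak o)\mu(\varpi) M(\mathfrak o)$ and $n_1 \in N(\mathfrak o)$.

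Granting this lemma, part (1) is immediate from the expression above. For part (2), injectivity of $X^M_\mu(\gamma) \hookrightarrow X^G_\mu(\gamma)$ follows from $M(F) \cap G(\mathfrak o) = M(\mathfrak o)$. For surjectivity, I would start with $x \in G(F)$ such that $x^{-1}\gamma x \in G(\mathfrak o)\mu(\varpi) G(\mathfrak o)$, reduce to $x = mn$ by Iwasawa, and observe that it is enough to prove $n \in N(\mathfrak o)$: then $x G(\mathfrak o) = m G(\mathfrak o)$, and the boundary half of the lemma gives $\gamma' \in M(\mathfrak o)\mu(\varpi) M(\mathfrak o)$, exhibiting $m G(\mathfrak o)$ as an element of $X^M_\mu(\gamma)$.

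Proving $n \in N(\mathfrak o)$ is where the slope hypothesis enters, and it is the main obstacle. The boundary half of the lemma also yields $\tilde n \in N(\mathfrak o)$, so the task is to invert the Lang-type map $n \mapsto \Ad(\gamma'^{-1})(n^{-1}) \cdot n$ on $N(F)$ in an integrality-preserving manner. I would filter $N$ by its descending central series and induct on the abelian layers. On each such layer, identified with an $\Ad(\gamma')$-stable subquotient of $\Lie N$, the equation linearizes to
\[
\tilde X = X - \Ad(\gamma'^{-1})(X).
\]
The positive-slope hypothesis forces every eigenvalue of $\Ad(\gamma')$ on $\Lie N$ to have strictly positive valuation, so the formal geometric identity $(1 - \Ad(\gamma'^{-1}))^{-1} = -\sum_{k \ge 1} \Ad(\gamma')^k$ converges $\varpi$-adically on any vector of $\Lie N$, and the limit inherits positive slopes. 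The genuine Hodge-Newton content of the argument is the careful choice of $\mathfrak o$-lattices on each layer ensuring that this convergent sum takes integral inputs to integral outputs; once that is accomplished, lifting the layerwise integral solutions back up through the central series of $N$ yields $n \in N(\mathfrak o)$, completing surjectivity.
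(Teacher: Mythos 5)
Your overall strategy---Iwasawa decomposition, reduction to a structural lemma on the affine Grassmannian, then inverting the resulting unipotent equation layer by layer using the positive-slope hypothesis---is the same as the paper's, but your structural lemma is stated with the wrong factorization order, and in the boundary case it is false. You factor $x^{-1}\gamma x$ as $\gamma'\tilde n$ with the $M$-part on the \emph{left}, and claim that $w_M(\gamma')=p_M(\mu)$ forces $\tilde n\in N(\mathfrak o)$. Take $G=GL_2$ with $P=B$ the upper-triangular Borel, $\mu=(1,0)$, $m_1=\mu(\varpi)=\mathrm{diag}(\varpi,1)$, and $n_1$ the unipotent upper-triangular matrix with off-diagonal entry $\varpi^{-1}$. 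Then $m_1 n_1$ has entries $\varpi,1,0,1$, so its elementary divisors are $(1,\varpi)$ and it lies in $G(\mathfrak o)\mu(\varpi)G(\mathfrak o)$, while $w_A(m_1)=\mu=p_A(\mu)$; nonetheless $n_1\notin N(\mathfrak o)$. The true statement, which is Lemma 2.2 of \cite{kottwitz03} and what the paper actually invokes, has the unipotent part on the left: if $n_1 m_1\in G(\mathfrak o)\mu(\varpi)G(\mathfrak o)$ with $n_1\in N(F)$, $m_1\in M(F)$, and $w_M(m_1)=p_M(\mu)$, then $n_1\in N(\mathfrak o)$ and $m_1\in M(\mathfrak o)\mu(\varpi)M(\mathfrak o)$. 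In the present situation $x^{-1}\gamma x = n_1\gamma'$ with $n_1=n^{-1}\Ad(\gamma')(n)$, and your $\tilde n=\gamma'^{-1}n_1\gamma'$ can easily fail to be integral, since for $\mu$ dominant $\gamma'^{-1}N(\mathfrak o)\gamma'$ is \emph{not} contained in $N(\mathfrak o)$. Part (1) of your argument is unaffected, since $w_M(m_1)$ does not depend on the order of the factorization.

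Once this is repaired the rest of your plan goes through and matches the paper. Working with $n_1=n^{-1}\Ad(\gamma')(n)\in N(\mathfrak o)$, the linearized layer equation becomes $X_1=(\Ad(\gamma')-1)X$, and the series to sum is $X=-\sum_{k\ge 0}\Ad(\gamma')^k X_1$, which converges integrally because $\Ad(\gamma')$ stabilizes the $\mathfrak o$-form of each abelian subquotient (this is where $m_1\in M(\mathfrak o)\mu(\varpi)M(\mathfrak o)$ with $\mu$ dominant enters) and has strictly positive slopes on $\Lie N$. That is exactly the content of the paper's Lemma \ref{lem.Tspace}, fed through the non-abelian induction of Lemma \ref{lemma.m.3.1}; the paper establishes invertibility of $1-T$ on the lattice by computing $\det(1-T)\in\mathfrak o^\times$ rather than by summing a geometric series, but the two arguments are equivalent.
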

 
  The theorem will be proved at the end of this section. The second part  
of the theorem is our linear  group-theoretic generalization of
Katz's Hodge-Newton decomposition (see Theorem 1.6.1 of
\cite{katz79}). From the first part of the theorem we obtain the following
corollary, which is  analogous to the  group-theoretic generalization of Mazur's
inequality (see Theorem 1.4.1 of
\cite{katz79}) proved by Rapoport-Richartz \cite{rapoport-richartz96}. 
 
\begin{corollary}\label{cor.maz}
Let $\mu$ be a dominant coweight, let $\gamma \in G(F)$, and write $[\nu_\gamma]$
for the  $B$-dominant element in $X_*(A)_\mathbb Q$ obtained as the Newton point of
$\gamma$. Suppose further that
$X^G_{\mu}(\gamma)$ is non-empty. Then $[\nu_\gamma] \le \mu$ in the sense that
$\mu -[\nu_\gamma]$ is a non-negative real linear combination of simple coroots.
Moreover
$w_G(\gamma)=p_G(\mu)$.
\end{corollary}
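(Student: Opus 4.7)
The second assertion, $w_G(\gamma) = p_G(\mu)$, is immediate from part (1) of Theorem \ref{linHNthm} applied with $P = G$: the unipotent radical $N$ is trivial, so the partial order $\overset{G}\le$ collapses to equality.

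For the Newton inequality $[\nu_\gamma] \le \mu$, begin by replacing $\gamma$ with a $G(F)$-conjugate so that $\nu_\gamma$ coincides with its dominant representative $[\nu_\gamma]$; this is permissible by Lemma \ref{lem.con} and the $G(F)$-equivariance of the sets $X^G_\mu$, and it changes neither the hypothesis nor the conclusion. Let $M$ be the centralizer of $\nu_\gamma$ in $G$, which by dominance is a standard Levi subgroup, and let $P = MN$ be the associated standard parabolic. Corollary \ref{cor.gaminM} places $\gamma$ in $M(F)$, where it is basic (cf.\ the remark following that corollary). Applying part (1) of Theorem \ref{linHNthm} to $P$ yields
\[
p_M(\mu) - w_M(\gamma) = \sum_{\alpha \in R(N)} c_\alpha \alpha^\vee \quad \text{in } \Lambda_M, \qquad c_\alpha \in \mathbb{Z}_{\ge 0}.
\]

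To lift this to $X_*(A)_\mathbb{Q}$, use the direct sum decomposition $X_*(A)_\mathbb{Q} = X_*(A_M)_\mathbb{Q} \oplus \mathbb{Q}R_M^\vee$, under which the canonical map to $\Lambda_M \otimes \mathbb{Q}$ is identified with the projection $(\cdot)_1$ onto the first summand. Corollary \ref{cor.bnew} applied in $M$ (where $\gamma$ is basic) identifies $w_M(\gamma)$ with $\nu_\gamma \in X_*(A_M)_\mathbb{Q}$, so writing $\mu = \mu_1 + \mu_2$ yields $\mu_1 - \nu_\gamma = \bigl(\sum_\alpha c_\alpha \alpha^\vee\bigr)_1$.

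Now split $\mu - \nu_\gamma = (\mu_1 - \nu_\gamma) + \mu_2$ and show each summand is a nonnegative real combination of positive coroots of $G$. The projection $(\cdot)_1$ equals the $W_M$-average, giving
\[
\mu_1 - \nu_\gamma = \frac{1}{|W_M|}\sum_{\alpha \in R(N)} c_\alpha \sum_{w \in W_M} w(\alpha^\vee),
\]
and each $w(\alpha^\vee)$ remains a positive coroot in $R^\vee(N)$: simple reflections $s_{\alpha_i}$ for $\alpha_i$ simple in $M$ preserve the coefficient of every simple root outside $M$ in the expansion of $\alpha \in R(N)$, so $s_{\alpha_i}(\alpha)$ is positive. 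For the other summand, $\mu_2$ lies in $\mathbb{Q}R_M^\vee$ and is $M$-dominant, since $\langle \alpha_i, \mu_2 \rangle = \langle \alpha_i, \mu \rangle \ge 0$ for each simple root $\alpha_i$ of $M$ (using $G$-dominance of $\mu$ and the vanishing of $\alpha_i$ on $X_*(A_M)_\mathbb{Q}$); pairing with $M$-fundamental weights then expresses $\mu_2$ as a nonnegative combination of simple coroots of $M$. Summing the two pieces exhibits $\mu - \nu_\gamma$ as a nonnegative real combination of positive coroots of $G$, equivalently of simple coroots of $G$. The main conceptual step is the compatibility furnished by Corollary \ref{cor.bnew}, which converts the combinatorial inequality in $\Lambda_M$ into the desired geometric inequality in $X_*(A)_\mathbb{Q}$; the main technical point is the bookkeeping between $(\mu_1 - \nu_\gamma)$ and $\mu_2$, which is what forces one to work with the centralizer of $\nu_\gamma$ rather than an arbitrary standard Levi.
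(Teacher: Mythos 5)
Your proof is correct and follows essentially the same route as the paper: both reduce to the standard parabolic $P=MN$ attached to $\nu_\gamma$, invoke Corollaries \ref{cor.gaminM} and \ref{cor.bnew} to translate the Newton point into $w_M(\gamma)$, and then read off the inequality from Theorem \ref{linHNthm}(1). The two places you deviate are minor but worth noting. First, for $w_G(\gamma)=p_G(\mu)$ the paper argues directly via $w_G(\gamma)=w_G(g^{-1}\gamma g)=w_G(\mu(\varpi))=p_G(\mu)$, whereas you apply Theorem \ref{linHNthm}(1) with $P=G$ so that the order degenerates; both are one-line arguments. Second, for lifting the inequality from $\Lambda_M$ to $X_*(A)_\mathbb Q$, the paper simply cites Lemma 4.9 of \cite{kottwitz03} as a ``standard fact,'' while you prove it from scratch using the decomposition $X_*(A)_\mathbb Q = X_*(A_M)_\mathbb Q\oplus\mathbb Q R_M^\vee$, the $W_M$-averaging description of the first projection, and the $W_M$-stability of $R(N)$. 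That argument is sound, though the final step (an $M$-dominant element of $\mathbb Q R_M^\vee$ is a nonnegative combination of simple coroots of $M$) rests on the nonnegativity of the inverse Cartan matrix, which you invoke only implicitly via ``pairing with $M$-fundamental weights''; it would be cleaner to say so explicitly, since $M$-dominance is an inequality against simple roots, not against fundamental weights.
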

\begin{proof}
 The Newton homomorphism
$\nu_\gamma$ of $\gamma$ determines a  parabolic subgroup $P=MN$, characterized by
the following two properties: $M$ is the centralizer of $\nu_\gamma$ in~$G$, and 
$\mathbb D$ acts (through 
$\nu_\gamma$) on $\Lie N$ with strictly positive weights.  It is harmless to
replace
$\gamma$ by a $G(F)$-conjugate, so we may assume that $P$ is
standard. 

As we have already seen in Corollary \ref{cor.gaminM}, $\gamma$ lies in $M(F)$ and
is basic for $M$. By Corollary \ref{cor.bnew}, under the canonical isomorphism 
\[
\Hom(\mathbb D,A_M) \simeq X_*(A_M)_\mathbb Q \simeq \Lambda_M\otimes \mathbb Q
\] 
$\nu_\gamma$ corresponds to the image of $w_M(\gamma)$ under $\Lambda_M \to
\Lambda_M\otimes \mathbb Q$, and from the first part of Theorem \ref{linHNthm} 
we conclude that $[\nu_\gamma] \le \mu$. Here we used standard facts (see,
\emph{e.g.}, Lemma 4.9 in  \cite{kottwitz03}) relating the partial order $\le$ on
$X_*(A)_\mathbb Q$ to the one on $\Lambda_M$. 

The last statement of the corollary is obtained by choosing $g \in G(F)$ such that
$g^{-1}\gamma g \in G(\mathfrak o)\mu(\varpi)G(\mathfrak o)$ and then observing
that 
\[
w_G(\gamma)=w_G(g^{-1}\gamma g) =w_G(\mu(\varpi))=p_G(\mu). 
\]
\end{proof}

\subsection{Two lemmas}
We now give two lemmas that will be needed in the proof of the theorem we just
stated. These lemmas are similar to Lemmas 3.1 and 3.2 in \cite{kottwitz03}. 

\begin{lemma}\label{lem.Tspace}
Let $V$ be a finite dimensional $F$-vector space, and let
$T:V
\to V$ be an invertible linear transformation all of whose slopes are strictly
positive. 
 Suppose that $\Lambda$
is a lattice in $V$ such that $T\Lambda \subset \Lambda$. Then 
$1-T:V \to V$ is bijective and moreover $(1-T)\Lambda =\Lambda$. In particular,
if $v \in V$ satisfies $v-Tv \in \Lambda$, then $v \in \Lambda$. 
\end{lemma}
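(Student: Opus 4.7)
The key point is that strict positivity of the slopes forces $T$ to be topologically nilpotent on $\Lambda$, after which everything follows from a geometric series argument in the $\varpi$--adic completion.

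\textbf{Step 1: $1-T$ is bijective on $V$.} All eigenvalues of $T$ in $\Fbar$ have strictly positive valuation, so $1$ is not an eigenvalue and the characteristic polynomial $\chi_T$ satisfies $\chi_T(1)\neq 0$. Hence $1-T:V\to V$ is invertible.

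\textbf{Step 2: Some power $T^N$ sends $\Lambda$ into $\varpi\Lambda$.} Choose an $\mathfrak o$-basis of $\Lambda$; in this basis $T$ is represented by a matrix $A\in M_n(\mathfrak o)$, where $n=\dim V$. The eigenvalues of $A$ over $\Fbar$ coincide with the slopes of $T$, all of which lie in the maximal ideal $\mathfrak m_{\Fbar}$ of the valuation ring of $\Fbar$. Reducing modulo $\mathfrak m_{\Fbar}$ (which contains $\varpi$), every eigenvalue of the reduction $\bar A\in M_n(k)$ is zero; hence $\bar A$ is nilpotent and $\bar A^{n}=0$. Lifting back, $A^{n}\in \varpi M_n(\mathfrak o)$, so $T^{n}\Lambda\subset \varpi\Lambda$. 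Iterating gives $T^{jn}\Lambda\subset \varpi^{j}\Lambda$ for every $j\ge 0$.

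\textbf{Step 3: The inverse preserves $\Lambda$.} For $v\in\Lambda$ the formal series $\sum_{k=0}^{\infty}T^{k}v$ has partial sums that are Cauchy in the $\varpi$--adic topology on $\Lambda$, by the estimate obtained in Step 2. Since $\mathfrak o$ is complete, $\Lambda$ is $\varpi$--adically complete, so the sum converges to an element of $\Lambda$. A direct computation shows $(1-T)\sum_{k=0}^{\infty}T^{k}v=v$, so this sum is $(1-T)^{-1}v$. Therefore $(1-T)^{-1}\Lambda\subset\Lambda$. Combined with the evident inclusion $(1-T)\Lambda\subset\Lambda$, this yields $(1-T)\Lambda=\Lambda$.

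\textbf{Step 4: The last assertion.} If $v\in V$ satisfies $v-Tv\in\Lambda$, then $v=(1-T)^{-1}(v-Tv)\in(1-T)^{-1}\Lambda=\Lambda$.

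The only substantive point is Step~2, where one must extract topological nilpotence of $T$ on $\Lambda$ from the purely slope-theoretic hypothesis; the reduction-mod-$\varpi$ argument via eigenvalues in $\Fbar$ handles this cleanly. The remaining steps are formal manipulations with a convergent geometric series, using completeness of $\mathfrak o$.
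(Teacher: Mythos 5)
Your proof is correct, but takes a genuinely different route from the paper's. Both arguments hinge on the same observation about the characteristic polynomial $f$ of $T$: since all eigenvalues have strictly positive valuation, the nonleading coefficients of $f$ lie in $\varpi\mathfrak o$. The paper then concludes immediately, observing that $\det(1-T)=f(1)\in 1+\varpi\mathfrak o\subset\mathfrak o^\times$, so $1-T$ is an automorphism of $\Lambda$ as a matter of general nonsense about matrices over $\mathfrak o$ with unit determinant. You instead extract topological nilpotence of $T$ on $\Lambda$ (via the reduction $\bar A$ having characteristic polynomial $\lambda^n$, hence $\bar A^n=0$ by Cayley--Hamilton) and sum a convergent geometric series, using completeness of $\mathfrak o$ to land back in $\Lambda$. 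The paper's determinant argument is shorter and requires no appeal to completeness; your argument is a bit longer but constructs $(1-T)^{-1}$ explicitly and isolates the more structural fact that $T$ is topologically nilpotent on $\Lambda$. One small slip in wording: you write that the eigenvalues of $A$ "coincide with the slopes of $T$" --- you mean that the eigenvalues have the slopes as their valuations, hence lie in $\mathfrak m_{\Fbar}$; the subsequent reasoning makes clear what is intended and is sound.
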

\begin{proof}
Clearly $(1-T)\Lambda \subset \Lambda$. Let
$f(\lambda)=\lambda^d+a_1\lambda^{d-1}+\dots+a_d$ be the characteristic polynomial
of
$T$,
$d$ being the dimension of $V$. Our hypothesis on the slopes guarantees that
$a_1,\dots,a_d$ all lie in the maximal ideal 
$\varpi\mathfrak o$. Therefore $\det(1-T)=f(1)$ lies in $1+\varpi\mathfrak o
\subset\mathfrak o^\times$. It is then clear that
$1-T$ is bijective from $V$ to $V$ and from $\Lambda$ to $\Lambda$. 
\end{proof}

The second lemma is a non-abelian analog of the first. It makes use  of the
notation in subsection \ref{sub.gp-not}, in particular the Borel
subgroup
$B$ containing $A$. 

\begin{lemma} \label{lemma.m.3.1}
Let $\mu \in X_*(A)$ be a coweight that
 is dominant with respect to our chosen $B$, and let $P=MN$  be a standard
parabolic subgroup. We consider elements 
$\gamma \in M(F)$ and
$n
\in N(F)$ satisfying $n^{-1}\gamma n \gamma^{-1} \in N(\mathfrak o)$.  We further
assume that $\gamma \in M(\mathfrak o)\mu(\varpi)M(\mathfrak o)$ and that every
slope of $\Ad(\gamma)$ on $\Lie N$ is strictly positive. The conclusion is then
that $n \in N(\mathfrak o)$.
\end{lemma}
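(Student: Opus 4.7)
The plan is to reduce to Lemma~\ref{lem.Tspace} by induction on the nilpotence class of $N$, with the abelian case being the heart of the matter. Since the inductive step will require passing to the quotient $\bar N = N/Z$ by a central $M$-stable subgroup $Z$, which is not itself the unipotent radical of a standard parabolic of $G$, I would at the outset formulate the lemma slightly more generally: the hypotheses actually used are that $N$ is a smooth connected $M$-stable unipotent group over $\mathfrak o$, that $\Ad(\gamma) N(\mathfrak o) \subset N(\mathfrak o)$, and that every slope of $\Ad(\gamma)$ on $\Lie N$ is strictly positive, and all three properties pass to $M$-stable normal subgroups and their quotients.

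As a preliminary step, I would verify the stability $\Ad(\gamma) N(\mathfrak o) \subset N(\mathfrak o)$. Using the Cartan decomposition in $M$ to write $\gamma = m_1 \mu(\varpi) m_2$ with $m_i \in M(\mathfrak o)$, and noting that every root $\alpha$ of $A$ in $N$ is positive and hence satisfies $\langle \alpha, \mu \rangle \ge 0$, one gets $\mu(\varpi) U_\alpha(\mathfrak o) \mu(\varpi)^{-1} \subset U_\alpha(\mathfrak o)$ for each such $\alpha$; combined with the fact that $M(\mathfrak o)$ normalizes $N(\mathfrak o)$, this yields the claim. For the base case ($N$ abelian), $N$ is a split vector group and may be identified with $\Lie N$. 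Setting $T = \Ad(\gamma)$ and $\Lambda = N(\mathfrak o)$, the hypothesis $n^{-1} \gamma n \gamma^{-1} \in N(\mathfrak o)$ becomes $(T-1)n \in \Lambda$ in additive notation. Since $T\Lambda \subset \Lambda$ and $T$ has strictly positive slopes, Lemma~\ref{lem.Tspace} applies and gives $(T-1)\Lambda = \Lambda$, so $n \in \Lambda = N(\mathfrak o)$.

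For the inductive step, let $Z$ be the last nontrivial term of the lower central series of $N$; this is abelian, central in $N$, and $M$-stable (being characteristic). Set $\phi = \Int(\gamma)$ and pass to $\bar N = N/Z$: the image $\bar n$ satisfies $\bar n^{-1} \phi(\bar n) \in \bar N(\mathfrak o)$, so by the inductive hypothesis $\bar n \in \bar N(\mathfrak o)$. Using that $N(\mathfrak o) \twoheadrightarrow \bar N(\mathfrak o)$ is surjective (since $N$ splits as a product of root subgroups as a scheme), lift $\bar n$ to some $n_0 \in N(\mathfrak o)$ and write $n = n_0 z$ with $z \in Z(F)$. The centrality of $z$ and the $\phi$-stability of $Z$ yield
\[
n^{-1}\phi(n) = n_0^{-1}\phi(n_0) \cdot z^{-1}\phi(z).
\]
The first factor lies in $N(\mathfrak o)$ by the preliminary step, and the product lies in $N(\mathfrak o)$ by hypothesis, so $z^{-1}\phi(z) \in N(\mathfrak o) \cap Z(F) = Z(\mathfrak o)$. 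Applying the base case to $(Z,z)$ then gives $z \in Z(\mathfrak o)$, and hence $n = n_0 z \in N(\mathfrak o)$. The main obstacle is exactly this inductive step, whose delicacy comes from $\bar N$ not belonging to the original class of groups considered; as noted above, it is resolved by working with the slightly more general statement whose hypotheses are intrinsic to $(M, N, \gamma)$.
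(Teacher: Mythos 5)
Your proof is correct and uses the same core ingredients as the paper's: the preliminary observation that $\gamma N(\mathfrak o)\gamma^{-1}\subset N(\mathfrak o)$, a filtration of $N$ with abelian subquotients, and Lemma~\ref{lem.Tspace} on each abelian piece. The difference is purely in the inductive bookkeeping. You induct on the nilpotence class, passing to the quotient $\bar N=N/Z$ by the last term $Z$ of the lower central series, lifting $\bar n$ to $n_0\in N(\mathfrak o)$, writing $n=n_0z$ with $z\in Z(F)$ central, and using the identity $n^{-1}\phi(n)=n_0^{-1}\phi(n_0)\cdot z^{-1}\phi(z)$ to reduce to the base case for $(Z,z)$. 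Since $\bar N$ need not be the unipotent radical of a standard parabolic, you correctly note that this forces one to state the lemma for a general $M$-stable unipotent $\mathfrak o$-group $N$ with $\Ad(\gamma)N(\mathfrak o)\subset N(\mathfrak o)$ and positive slopes on $\Lie N$. The paper avoids enlarging the class of groups considered by instead fixing an arbitrary $M$-stable filtration $N=N_0\supset N_1\supset\dots\supset N_r=\{1\}$ by normal subgroups with abelian quotients and proving, by induction on $i$, the intermediate statement that $n\in N_i(F)N(\mathfrak o)$; everything stays inside the original $N$, and Lemma~\ref{lem.Tspace} is applied to $(N_i/N_{i+1})(F)$ at each step. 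Both versions are equally rigorous; yours is perhaps cleaner conceptually (it isolates the abelian case as a genuine base case), while the paper's avoids the need to restate the lemma in broader generality.
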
 
\begin{proof}
We claim that $\gamma N(\mathfrak o) \gamma^{-1} \subset N(\mathfrak o)$. Indeed,
this follows from our hypothesis that $\gamma \in M(\mathfrak
o)\mu(\varpi)M(\mathfrak o)$. Here we used that 
$M(\mathfrak o)$ normalizes $N(\mathfrak o)$ and that $\mu(\varpi)N(\mathfrak
o)\mu(\varpi)^{-1} \subset N(\mathfrak o)$, a consequence of the fact that $\mu$
is dominant. 

Choose an $M$-stable filtration 
\begin{equation}
 N=N_0 \supset N_1 \supset N_2 \supset \dots \supset N_r=\{1\}
\end{equation}
 by normal subgroups with $N_i/N_{i+1}$ abelian for all $i$. Each
$N_i$ is
$A$-stable, hence is a product of root subgroups (even over $\mathfrak o$). We will
prove by induction on $i$ ($0 \le i \le r$) that $n \in N_i(F) N(\mathfrak
o)$. For
$i=0$ this statement is trivial, and for $i=r$ it is the statement of the lemma.
It remains to do the induction step. So suppose that for $0 \le i  < r$ we can
write $n$ as $n=n_i n_{\mathfrak o}$ for
$n_i \in N_i(F)$ and $n_\mathfrak o \in N(\mathfrak o)$. Then
$n_i^{-1}\gamma n_i\gamma^{-1}
\in N_i(\mathfrak o)$. By Lemma \ref{lem.Tspace}, applied to the $F$-vector
space $(N_i/N_{i+1})(F) \cong \Lie N_i/\Lie N_{i+1}$ and the linear transformation
given by conjugation by
$\gamma$, the image of
$n_i$ in
$(N_i/N_{i+1})(F)$ lies in $(N_i/N_{i+1})(\mathfrak
o)$. Since
$N_i(\mathfrak o)$ maps onto $(N_i/N_{i+1})(\mathfrak o)$, we see that $n_i$ can
be written as $n_{i+1} n'_{\mathfrak o}$ with $n_{i+1} \in N_{i+1}(F)$ and
$n'_{\mathfrak o}
\in N_i(\mathfrak o)$. Thus $n=n_{i+1} \cdot (n'_\mathfrak o  n_\mathfrak o) \in
N_{i+1}(F)N(\mathfrak o)$, as desired.
\end{proof}

\subsection{Proof of Theorem \ref{linHNthm} }
Now we are ready to prove Theorem \ref{linHNthm}. The proof is exactly parallel to
that of Theorem 1.1 in \cite{kottwitz03}. 
 Let $g \in G(F)$ and suppose
that 
\begin{equation}\label{m.3.1} g^{-1}\gamma g \in 
G(\mathfrak o)\mu(\varpi)G(\mathfrak o). 
\end{equation}   Use the Iwasawa decomposition to write $g$ as $mnk$
for $m \in M(F)$, $n \in N(F)$ and $k \in G(\mathfrak o)$. It follows 
from \eqref{m.3.1} that
\begin{equation}\label{m.3.2} n_1m_1 \in G(\mathfrak o)\mu(\varpi)G(\mathfrak o),
\end{equation} where $m_1:=m^{-1}\gamma m \in M(F)$ and
$n_1:=n^{-1}m_1 n m_1^{-1} \in N(F)$. We claim that 
\begin{equation}\label{353}
w_M(\gamma)=p_M(r_B(n_1m_1)).
\end{equation}
 Indeed,
\begin{equation*} w_M(\gamma)=w_M(m_1)=p_M(r_{B\cap M}(m_1))=p_M(r_B(n_1m_1)).
\end{equation*} 

Using equation (2.6) in section 2.6 of \cite{kottwitz03},
together with \eqref{m.3.2} and 
\eqref{353} above, we conclude that
$w_M(\gamma) \overset{P} \le p_M(\mu)$, which proves the first part of the
theorem. 

Now we prove  the second part of the theorem. Under the
hypotheses that $w_M(\gamma)=p_M(\mu)$ and that all the slopes of
$\Ad(\gamma)$ on $\Lie N(F)$ are strictly positive (and with 
$g,m,n,m_1,n_1$ as above), we begin by proving that $g \in M(F) 
G(\mathfrak o)$. It
follows from \eqref{m.3.2}, \eqref{353},  our assumption
$w_M(\gamma)=p_M(\mu)$,  and Lemma 2.2 of \cite{kottwitz03} that
$n_1m_1 \in G(\mathfrak o)  M(F)$. Therefore 
$n_1 \in G(\mathfrak o) M(F)$, say $n_1=k_2m_2$
with $k_2 \in G(\mathfrak o)$ and $m_2 \in M(F)$. Then 
$n_1m_2^{-1} \in P(\mathfrak o)$,
and therefore $n_1 \in N(\mathfrak o)$ and $m_2 \in M(\mathfrak o)$. Since
$n_1 \in N(\mathfrak o)$, the second statement of Lemma 2.2 of \cite{kottwitz03}
applies to $n_1m_1$, and hence $m_1 \in M(\mathfrak
o)\mu(\varpi)M(\mathfrak o)$. 

Now applying Lemma \ref{lemma.m.3.1} (not to the element $\gamma$, but to its
conjugate $m_1$, which satisfies all the hypotheses of that lemma), 
we see that
$n \in N(\mathfrak o)$. Therefore $g= m \cdot nk \in M(F)G(\mathfrak o)$, 
and we are done,
since we have already seen that 
\begin{equation} m^{-1}\gamma m=m_1
\in M(\mathfrak o)\mu(\varpi)M(\mathfrak o),
\end{equation} 
which shows that $g \in X^G_\mu(\gamma)$ is the image of $m \in X^M_\mu(\gamma)$. 
\qed

\section{Linear Hodge-Newton decomposition for endomorphisms}\label{sec3}
The results of the previous section were proved for split groups $G$. For $G=GL_n$
they yield concrete assertions in linear algebra, in particular a linear
Hodge-Newton decomposition for certain triples $(V,T,\Lambda)$ consisting of a
finite dimensional $F$-vector space $V$, a linear bijection $T:V \to V$, and a
lattice
$\Lambda$ in $V$. This will be the content of Theorem \ref{VTlambda}, a  close
relative of Katz's $\sigma$-linear Hodge-Newton decomposition \cite{katz79}.
However, in Theorem \ref{VTlambda} we actually work in greater generality, in that
we drop the requirement that $T$ be invertible, allowing $T$ to be an arbitrary
endomorphism of $V$. This will require us to use slopes in 
$\tilde {\mathbb Q}:=\mathbb Q \cup \{\infty\}$, not just $\mathbb Q$, and so we
must begin with some preliminary remarks about valuations and slopes. 

\subsection{Extended valuation map $\val:\Fbar \to \tilde {\mathbb Q}$} 
We extend $\val:\Fbar^\times \to \mathbb Q$ to $\val:\Fbar \to \tilde {\mathbb Q}$
by putting $\val(0)=\infty$. This extended valuation map is a homomorphism of
monoids, using field multiplication  on $\Fbar$ and   
the usual addition on $\mathbb Q$ together with the rules 
$x+\infty=\infty+x=\infty$ for all
$x$ in $\tilde {\mathbb Q}$. We will also use the convention that
$\varpi^\infty=0$. 

The usual partial order on $\mathbb Q$ extends to a total order on $\tilde
{\mathbb Q}$ for which $\infty$ is the greatest element. 

\subsection{Slopes and Newton points} 
Let $T$ be an endomorphism of an $n$-dimensional $F$-vector space. Then $T$ has
$n$ eigenvalues $\lambda_1,\dots,\lambda_n$ in $\Fbar$. The \emph{slopes} of $T$
are the $n$ elements $\nu_i=\val(\lambda_i)$ of $\tilde {\mathbb Q}$.
Renumbering the slopes so that $\nu_1 \le \nu_2 \le \dots \le  \nu_n$, we
obtain the \emph{Newton point} 
\[
\nu(T):=(\nu_1,\dots,\nu_n)
\]
 of $T$. The Newton point lies in $\tilde {\mathbb Q}^n_+$, the subset of
$\tilde {\mathbb Q}^n$ consisting of non-decreasing $n$-tuples. 

\subsection{Hodge points} 
Now let $T$ be an endomorphism of an $n$-dimensional vector space $V$, and let
$\Lambda$ be a lattice in $V$ (an $\mathfrak o$-submodule of $V$ that is free of
rank $n$). Since $T$ need not be invertible, $T\Lambda$ need not be a lattice in
$V$, but it certainly is a finitely generated $\mathfrak o$-submodule of $V$, and
therefore there exists an $\mathfrak o$-basis $e_1,\dots,e_n$ for $\Lambda$ and
$n$ elements $\mu_1 \le \mu_2 \le \dots \le \mu_n$ of $\mathbb Z \cup \{ \infty
\}$ such that 
\begin{equation}\label{eq.Tmu}
T\Lambda=\mathfrak o\varpi^{\mu_1} e_1 + \dots + \mathfrak o\varpi^{\mu_n} e_n.
\end{equation} 
We are of course making use of our convention  that $\varpi^\infty=0$. In this way
we obtain the \emph{Hodge point} 
\[
\mu(T,\Lambda):=(\mu_1,\dots,\mu_n)
\]
of $(T,\Lambda)$ (independent of the choice of basis for which \eqref{eq.Tmu}
holds). When necessary for the sake of clarity, we will denote the $i$-th entry of
the $n$-tuple $\mu(T,\Lambda)$ by $\mu_i(T,\Lambda)$ rather than $\mu_i$, and the
same goes for $\nu(T)$.  It is worth noting that the number of entries of $\mu$
that are finite is equal to the rank of $T$. 

For any $i \in \{1,2,\dots,n\}$ and any $a \in \mathbb Z \cup \{ \infty \}$, the
inequality $\mu_1+\dots+\mu_i \ge a$ is equivalent to the condition that the
linear transformation $\wedge^iT:\wedge^iV \to \wedge^iV$ map the lattice 
$\wedge^i\Lambda$ into $\varpi^a \wedge^i\Lambda$. An even more concrete version
of this is obtained by choosing an  $\mathfrak o$-basis for $\Lambda$, which also
serves as $F$-basis for $V$ and allows us to regard $T$ as an $n\times n$-matrix.
The matrix entries of $\wedge^iT$ are then the $i\times i$-minors of $T$, and the 
inequality $\mu_1+\dots+\mu_i \ge a$ is also equivalent to the condition that
every $i\times i$-minor of $T$ lies in 
$\varpi^a\mathfrak o$. 

\subsection{Openness lemma for Hodge points} 
Recall that $F$ can be regarded as a topological field. In this topology the sets 
$\varpi^a\mathfrak o$ $(a \in \mathbb Z)$ form a neighborhood base at the point
$0$, and each such subset is both open and closed in $F$. For any affine scheme
$X=\Spec(A)$ of finite type over $F$ ($A$ being a finitely generated
$F$-algebra) the set $X(F)$ then acquires a natural topology: the smallest one
(fewest open sets) for which the functions $f:X(F) \to F$ obtained from elements
in the $F$-algebra $A$ are all continuous. Any morphism $X \to X'$ then induces a
continuous map $X(F) \to X'(F)$. In particular any finite dimensional
$F$-vector space acquires a natural topology; for example $F^n=\mathbb A^n(F)$
acquires the usual product topology. 

\begin{lemma}\label{lem.open}
Let $\Lambda$ be a lattice in an $n$-dimensional $F$-vector space $V$. Let $i \in
\{1,\dots,n\}$ and let $a \in \mathbb Z$. Then the  subsets 
\[\{T \in \End_FV: \mu_1(T,\Lambda)+\dots+\mu_i(T,\Lambda) \ge a\}\]
\[\{T \in \End_FV: \mu_1(T,\Lambda)+\dots+\mu_i(T,\Lambda) = a\}\]
are  open and closed in $\End_FV$. 
\end{lemma}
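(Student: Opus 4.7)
The plan is to reduce the statement to the concrete criterion already noted in the text just above the lemma: after choosing an $\mathfrak{o}$-basis of $\Lambda$ (which is simultaneously an $F$-basis of $V$), an endomorphism $T$ is represented by an $n\times n$-matrix over $F$, and the condition $\mu_1(T,\Lambda)+\dots+\mu_i(T,\Lambda)\ge a$ is equivalent to requiring that every $i\times i$-minor of this matrix lie in $\varpi^a\mathfrak{o}$.

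First I would fix such a basis, identifying $\End_F V$ with the affine $F$-scheme $M_n=\mathbb{A}^{n^2}$ of matrices. Each $i\times i$-minor is a polynomial in the matrix entries, hence by the definition of the topology on $\End_F V(F)$ it defines a continuous function $\End_F V\to F$. Since $\varpi^a\mathfrak{o}$ is both open and closed in $F$, the preimage of $\varpi^a\mathfrak{o}$ under any such minor is both open and closed in $\End_F V$. The set $\{T:\mu_1+\dots+\mu_i\ge a\}$ is the intersection over all $\binom{n}{i}^2$ minors of these preimages, and a finite intersection of open-and-closed sets is open-and-closed. This handles the first set.

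For the second set, I would simply write
\[
\{T:\mu_1+\dots+\mu_i=a\}=\{T:\mu_1+\dots+\mu_i\ge a\}\setminus\{T:\mu_1+\dots+\mu_i\ge a+1\},
\]
i.e.\ as the intersection of an open-and-closed set with the complement of an open-and-closed set. Both factors are open and closed by the first part, so the equality set is open and closed as well.

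There is no real obstacle here: the proof is essentially a translation of the Smith-normal-form characterization of $\mu_i(T,\Lambda)$ into a statement about minors, combined with the fact that $\varpi^a\mathfrak{o}$ is clopen in $F$. The only minor point worth mentioning is that the characterization recalled in the text applies to finite $a$; for completeness one should note that the case $a=\infty$ is a limit of the finite cases (or, equivalently, that $\mu_1+\dots+\mu_i=\infty$ means the rank of $T$ is less than $i$, i.e.\ all $i\times i$-minors vanish, which is again a closed and indeed Zariski-closed condition). This observation is used implicitly since $a$ is assumed to lie in $\mathbb{Z}$ in the statement.
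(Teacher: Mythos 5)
Your proof is correct and follows exactly the paper's own route: both use the minor characterization of $\mu_1+\dots+\mu_i\ge a$ (preimages of the clopen set $\varpi^a\mathfrak{o}$ under the polynomial minor maps, intersected finitely many times) for the first set, and then express the second set as the difference of two sets of the first kind. Your version simply spells out the details more explicitly than the paper does.
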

\begin{proof}
That the first set is open and closed follows from the description of
$\mu(T,\Lambda)$ given at the end of the last subsection. Since the second set is
the difference of two sets of the first kind, it too is open and closed. 
\end{proof}

\subsection{Dominance order on $\tilde {\mathbb Q}^n_+$} \label{sub.domord}
 For $\mu, \nu \in \tilde {\mathbb Q}_+^n$
 we say that $\mu \ge \nu$ if 

\begin{equation}\label{eq.root.ineq}
\mu_1+\dots + \mu_i \le \nu_1+\dots + \nu_i\text{ \quad for
$i=1,\dots,n-1$}
\end{equation}
and 
\begin{equation}\label{eq.root.eq}
\mu_1+\dots + \mu_i = \nu_1+\dots + \nu_i \text{ \quad for $i=n$}.
\end{equation}
The direction of the inequality in \eqref{eq.root.ineq} reflects our convention of
considering \emph{increasing} sequences $\mu_1 \le \mu_2 \le
\dots \le \mu_n$ to be dominant, or, in other words, of taking 
the standard Borel subgroup in $GL_n$ be \emph{lower} triangular. 
 
\subsection{Hodge-Newton decomposition for $(V,T,\Lambda)$} 
In the next theorem the first part is an analog of Mazur's inequality and the
second part is an analog of Katz's Hodge-Newton decomposition. 

\begin{theorem}\label{VTlambda}
Let $V$ be an $n$-dimensional $F$-vector space, let $T:V \to V$ be a linear
transformation, and let $\Lambda$ be a lattice in $V$. Put $\mu=\mu(T,\Lambda)$
and $\nu=\nu(T)$. Then the following two conclusions hold. 
\begin{enumerate}
\item $\nu(T) \le \mu(T,\Lambda)$. 
\item Suppose that $V$ is the direct sum of linear subspaces $U,W$ having the
property that $TU \subset U$ and $TW \subset W$. Suppose further that every slope
of $T$ on $U$ is strictly less than every slope of $T$ on $W$. Finally, put
$r=\dim U$ and suppose that $\mu_1+\dots+\mu_r = \nu_1+\dots+\nu_r$. Then 
$\Lambda$ decomposes as 
\[
\Lambda = (\Lambda \cap U) \oplus (\Lambda \cap W).
\]
\end{enumerate}
\end{theorem}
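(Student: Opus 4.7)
The plan is to prove part (1) by an elementary exterior-power argument, and to prove part (2) by reducing to the invertible case via a small perturbation of $T$ and then applying Theorem \ref{linHNthm}(2) to $G = GL(V)$.

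For part (1), I use the characterization of $\mu_1+\cdots+\mu_i$ from the subsection on Hodge points: it is the largest element $a$ of $\mathbb Z\cup\{\infty\}$ such that $\wedge^i T$ maps $\wedge^i\Lambda$ into $\varpi^a\wedge^i\Lambda$. Hence every eigenvalue of $\wedge^i T$ has valuation at least $\mu_1+\cdots+\mu_i$; and since the eigenvalues of $\wedge^i T$ are the $i$-fold products of eigenvalues of $T$, the smallest such valuation equals $\nu_1+\cdots+\nu_i$. This gives the partial-sum inequality. At $i = n$, the operator $\wedge^n T$ is multiplication by $\det T$ on the one-dimensional $\wedge^n V$, giving the endpoint equality $\mu_1+\cdots+\mu_n = \val(\det T) = \nu_1+\cdots+\nu_n$.

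For the reduction step in part (2), let $p_W\colon V\to V$ be the projection onto $W$ along $U$ and set $T_\epsilon := T + \epsilon p_W$ for $\epsilon\in F^\times$ chosen with $\val(\epsilon)$ so large that (i) Lemma \ref{lem.open} yields $\mu(T_\epsilon,\Lambda) = \mu(T,\Lambda)$, (ii) every slope of $T_\epsilon|_W = T_W + \epsilon\id_W$ is strictly greater than $\nu_r$, and (iii) $-\epsilon$ is not an eigenvalue of $T_W$, so that $T_\epsilon$ is invertible. The first $r$ Newton slopes of $T_\epsilon$ remain $\nu_1,\ldots,\nu_r$ (namely the slopes of $T_U$), so the hypothesis $\mu_1+\cdots+\mu_r = \nu_1+\cdots+\nu_r$ transfers to $T_\epsilon$; and since the subspaces $U, W$ are unchanged, so are $\Lambda\cap U$ and $\Lambda\cap W$. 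It therefore suffices to treat invertible $T$.

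For invertible $T$, I choose an $F$-basis of $V$ adapted to $V = U\oplus W$, identifying $G = GL_n$, the standard Levi $M = GL(U)\times GL(W)$, and $T\in M(F)$. Writing $\Lambda = g\cdot\mathfrak{o}^n$ with $g\in G(F)$, the Hodge-point condition reads $g\in X^G_\mu(T)$. I verify the two hypotheses of Theorem \ref{linHNthm}(2): $w_M(T) = p_M(\mu)$ follows from $\val(\det T_U) = \nu_1+\cdots+\nu_r = \mu_1+\cdots+\mu_r$ combined with the endpoint equality from part (1); and the slopes of $\Ad(T)$ on $\Lie N \cong \Hom_F(U,W)$ are of the form $\val(\beta) - \val(\alpha)$ for $\alpha, \beta$ eigenvalues of $T_U, T_W$ respectively, hence strictly positive by the slope-separation hypothesis. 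Theorem \ref{linHNthm}(2) then gives $g\in M(F)G(\mathfrak{o})$, so $\Lambda = m\cdot\mathfrak{o}^n$ for some block diagonal $m = (m_U,m_W)\in M(F)$, exhibiting $\Lambda = (m_U\mathfrak{o}^r)\oplus(m_W\mathfrak{o}^{n-r}) = (\Lambda\cap U)\oplus(\Lambda\cap W)$. The main obstacle lies in the perturbation step: one must simultaneously arrange that $\val(\epsilon)$ is large enough for Hodge invariance and for slope separation on $W$ versus $U$, yet $-\epsilon$ avoids the finite set of eigenvalues of $T_W$; once a suitable $\epsilon$ is found, the rest is the bookkeeping of matching the hypotheses of Theorem \ref{linHNthm}(2), which is routine.
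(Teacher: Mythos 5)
Your part (1) argument is a genuinely different and more elementary route than the paper's. You argue directly: if $\wedge^i T$ maps $\wedge^i\Lambda$ into $\varpi^{\mu_1+\cdots+\mu_i}\wedge^i\Lambda$, then $\varpi^{-(\mu_1+\cdots+\mu_i)}\wedge^i T$ preserves a lattice, so its eigenvalues have nonnegative valuation, and hence every eigenvalue of $\wedge^i T$ has valuation $\ge \mu_1+\cdots+\mu_i$; since the eigenvalues of $\wedge^i T$ are $i$-fold products of eigenvalues of $T$, the smallest such valuation is $\nu_1+\cdots+\nu_i$. The paper instead derives part (1) from the invertible case (via Corollary \ref{cor.maz}, hence Newton homomorphisms and Theorem \ref{linHNthm}(1)) followed by a perturbation; your route is shorter and avoids the group-theoretic machinery for this part. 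Your part (2) follows essentially the same route as the paper: perturb $T$ to an invertible transformation and then feed $GL_n$ into Theorem \ref{linHNthm}(2). The only difference is that your perturbation $T_\epsilon = T + \epsilon p_W$ acts only on $W$, whereas the paper uses the scalar $\tilde T = T + c$; yours has the mild advantage that $T_\epsilon|_U = T_U$ is literally unchanged, so the slopes on $U$ require no tracking.

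One slip in your perturbation step: the claim that Lemma \ref{lem.open} gives $\mu(T_\epsilon,\Lambda) = \mu(T,\Lambda)$ is false whenever $T$ is non-invertible, since then $\mu(T,\Lambda)$ has infinite entries (their number being the corank of $T$) while $\mu(T_\epsilon,\Lambda)$ has none. What Lemma \ref{lem.open} actually provides, and what you actually use, is that the partial sum $\mu_1+\cdots+\mu_r$ is locally constant near $T$ provided it is finite; and it is finite because the slope-separation hypothesis forces $\nu_1,\ldots,\nu_r$ to be finite (assuming $W\neq 0$, the case $W=0$ being trivial), hence the rank of $T$ is at least $r$ and $\mu_1,\ldots,\mu_r$ are finite as well. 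This is exactly how the paper reasons, so the slip lies only in the statement of your condition (i), not in what your argument needs.
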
 
\begin{proof}
We will first prove the theorem in the special case that $T$ is invertible,
deriving it from the case $G=GL_n$ of our group theoretic linear Hodge-Newton
decomposition. We will then derive the general case from this special one, 
perturbing $T$ by a suitably small non-zero scalar in such a way that the
perturbed linear transformation is invertible and that (1),(2) for it imply (1),(2)
for $T$. 

So for the moment we assume that $T$ is invertible. Choose any $F$-basis for $V$,
and use it to identify $V$ with $F^n$ and the algebraic group $GL_FV$ with
$G=GL_n$. Choose $g \in G(F)/G(\mathfrak o)$ such that $\Lambda =g \mathfrak o^n$.
Our linear transformation $T$ is now an element $\gamma \in G(F)$, and $\mu$
(which lies in $\mathbb Z^n$ and satisfies $\mu_1 \le \dots \le \mu_n$) can be
viewed as a coweight of the diagonal maximal torus $A$ in $G$ that is dominant with
respect to the \emph{lower} triangular Borel subgroup $B$ of $G$; moreover we
have $g^{-1}\gamma g \in G(\mathfrak o)\mu(\varpi)G(\mathfrak o)$. Corollary
\ref{cor.maz} tells us that $[\nu_\gamma] \le \mu$ in the dominance order for $B$,
and this is the content of (1). 

For (2) we choose our basis $e_1,\dots,e_n$ for $V$ in such a way that the first
$r$ elements form a basis for $U$ and the remaining $n-r$ elements form a basis
for $W$. Still taking $B$ to be the lower triangular Borel subgroup, we obtain a
standard parabolic subgroup $P=MN$ with $P$ consisting of elements $g$ in $G$ such
that $gW=W$, and $M$ consisting of elements $g$ such that $gU=U$ and $gW=W$. 
Then $\gamma=T \in M(F)$, and our hypothesis that the slopes of $\gamma$ on $U$
are strictly smaller than those on $W$ is equivalent to the statement that every
slope of $\Ad(\gamma)$ on $\Lie N$ is strictly positive. The hypothesis that
$\mu_1+\dots +\mu_r=\nu_1+\dots + \nu_r$ is equivalent to the hypothesis that
$w_M(\gamma)=p_M(\mu)$ (bear in mind that the equality $\mu_1+\dots +\mu_n=\nu_1
+\dots +\nu_n$ is automatic). As before we choose $g \in G(F)$ such that
$\Lambda=g\mathfrak o^n$. Then $gG(\mathfrak o)$ lies in $X^G_\mu(\gamma)$, and
thus Theorem \ref{linHNthm}(2) tells us that $g \in M(F)G(\mathfrak o)$, which
implies that 
\[
\Lambda = (\Lambda \cap U) \oplus (\Lambda \cap W),
\] 
as desired. 

So now we know that the theorem is true when $T$ is invertible. For a general
endomorphism $T$ our method will be to perturb $T$ slightly, replacing it by
$\tilde T=T+c$ for suitably small $c \in F$ with $c \ne 0$. We will do this in
such a way that $\tilde T$ is invertible and the statements of the theorem for
$\tilde T$ imply those for $T$. Put $\tilde \mu=\mu(\tilde T,\Lambda)$ and $\tilde
\nu=\nu(\tilde T)$; these of course depend on $c$. 

We begin with (1). We must show that for all $i \in \{1,\dots,n\}$ 
\begin{equation}\label{eq.lei}
\mu_1+\dots+\mu_i \le \nu_1 +\dots +\nu_i.
\end{equation} 
and that 
\[
\mu_1+\dots +\mu_n = \nu_1 + \dots + \nu_n. 
\]
Let us start by verifying the last equality. We have already handled the case in
which $T$ is invertible, and if $T$ is not invertible then it is clear from the
definitions that both sides of the equality are infinite. 

Next we prove the inequality \eqref{eq.lei}. 
If its right side  is infinite, the inequality is trivially true, so
we may as well assume that it is finite. Then the rank of $T$ is at
least $i$, so that $\mu_1+ \dots +\mu_i$ is finite as well. By Lemma
\ref{lem.open}, for all sufficiently small $c \in F$ we have 
\[
\mu_1+\dots +\mu_i=\tilde \mu_1+\dots +\tilde \mu_i. 
\] 
The eigenvalues of $\tilde T$ are obtained from those of $T$ by adding $c$.
Therefore, for all sufficiently small $c \in F$, $c\ne 0$, the perturbed linear
transformation $\tilde T$ is invertible and 
\[
\nu_1+\dots +\nu_i=\tilde \nu_1+\dots +\tilde \nu_i.
\] 
Since $\tilde T$ is invertible, (1) holds for it, which means that 
\[
\tilde\mu_1+\dots+\tilde\mu_i \le \tilde\nu_1 +\dots +\tilde\nu_i.
\]
 and therefore
the inequality
\eqref{eq.lei} follows. 

It remains to prove (2). The case $W=0$ being trivial, we may as well assume that
$r < n$. Since every slope of $T$ on $U$ is strictly less than every slope of $T$
on $W$, the slopes of $T$ on $U$ (resp. $W$) are $\nu_1,\dots,\nu_r$ (resp.
$\nu_{r+1},\dots,\nu_n$), and we have 
\[
\nu_1 \le \dots \le \nu_r < \nu_{r+1} \le \dots \le \nu_n,
\]
from which we conclude that all of $\nu_1, \dots, \nu_r$ are finite. This implies
that the rank of $T$ is at least $r$ and hence that $\mu_1,\dots,\mu_r$ are also
finite. From Lemma \ref{lem.open} it follows that when $c$ is sufficiently small,
there is an equality 
\[
\mu_1+\dots + \mu_r=\tilde \mu_1+ \dots + \tilde \mu_r. 
\]

For any $c \in \mathfrak o$ we have $\tilde TU \subset U$ and $\tilde TW \subset
W$. For sufficiently small $c$ the valuations of the eigenvalues of $\tilde T$ on
$U$ will be the same as those of $T$ on $U$,  namely $\nu_1,\dots,\nu_r$, and for
sufficiently small non-zero
$c$ the eigenvalues of $\tilde T$ on $W$ will be non-zero with valuation strictly
greater than $\nu_r$; when both of these things  happen, we will have 
\[
\nu_1+\dots + \nu_r=\tilde \nu_1+ \dots + \tilde \nu_r,
\] 
with equality actually holding term-by-term. 
Thus
we see that when
$c$ is sufficiently small and non-zero, $\tilde T$ satisfies all the hypotheses of
our theorem (for the given $\Lambda,U,W$), invertibility being ensured by
having no eigenvalue equal to $0$. Applying the theorem to $\tilde T$, we conclude
that 
\[
\Lambda=(\Lambda \cap U) \oplus (\Lambda \cap W),
\]
as desired. 
\end{proof}

\section{Recognizing elements in the subset $\mathcal {BT}_M$ of $\mathcal
{BT}_G$} \label{sec4}

\subsection{Notation}\label{r.1} 

Let $G$ be a split connected reductive group over $F$, and let $A$ be a
split maximal torus of~$G$ over~$F$.  We write $A_G$ for the split torus obtained
as the identity component of the center of $G$. We denote by
$\mathcal B=\mathcal B(A)$ the set of Borel subgroups of~$G$ containing $A$. For
$B=AU \in \mathcal B$ ($U$ being the unipotent radical of $B$) we denote by $\bar
B=A\bar U$ the Borel subgroup in
$\mathcal B$ that is opposite to~$B$. 

Let $M$ be a Levi subgroup of $G$ containing $A$.  We write
$\mathcal P(M)$ for the (finite) set of parabolic
$F$-subgroups of
$G$ admitting $M$ as Levi component.  For $P=MN \in \mathcal P(M)$, we write
$R_M$ (respectively, $R_N$)  for the set of roots of $A$ in $M$ (respectively,
$N$), and we write
$\bar P=M\bar N$ for the
 parabolic subgroup in $\mathcal P(M)$ that is opposite to $P$. We denote
by $\Lambda_M$ the quotient of $X_*(A)$ by the coroot lattice for $M$, and by
$p_M:X_*(A) \to \Lambda_M$ the canonical surjection.  Finally, we extend scalars
to 
$\mathbb R$, obtaining a linear map 
\[
\mathfrak a:=X_*(A)\otimes_\mathbb Z \mathbb R \to 
\Lambda_M \otimes_\mathbb Z \mathbb R=:\mathfrak a_M
\] 
which will still be denoted $p_M$. When $M=A$ we have $\mathfrak a_A=\mathfrak a$,
and in this case we will always suppress the subscript.  It is only in this
section of the paper that we will use the notation $\mathfrak a$ in this way; in
subsequent sections $\mathfrak a$ will always denote the Lie algebra of $A$.

\subsection{Review of Arthur's $(G,M)$-orthogonal sets}\label{subsec,gmorthog} 
 Let $M$ be a Levi subgroup containing
$A$.  
For adjacent $P=MN$, $P'=MN'$ in $ \mathcal P(M)$ we consider
the collection of elements in $\mathfrak a_M$ obtained as  images of
coroots
$\alpha^\vee$ with $\alpha \in R_N \cap R_{\bar N'}$.  Let 
$\beta_{P,P'}$ denote the unique member of this collection of which all other
members  are positive  multiples. In
case $M=A$, so that $P,P'$ are Borel subgroups, $\beta_{P,P'}$ is the unique
coroot of $A$ that is positive for $P$ and negative for $P'$.

Recall (see \cite{arthur76}) that a family  of points $x_P$ in $\mathfrak a_M$,
one for each
$P
\in \mathcal P(M)$, is said to be a \emph{$(G,M)$-orthogonal set}  if for
every pair
$P, P' \in \mathcal P(M)$ of adjacent parabolic subgroups 
there exists a real number  
$r$ (necessarily unique) such that
\[
x_{P}-x_{P'}=r\beta_{P,P'}.
\] 
  When all the numbers
$r$ are non-negative (respectively, non-positive), the $(G,M)$-orthogonal
set is said to be
\emph{positive} (respectively, \emph{negative}). Note that these concepts still
make sense when $\mathfrak a_M$ is replaced by its underlying affine space, since
the difference of two elements in that affine space is a well-defined element of
the vector space
$\mathfrak a_M$.

Let $P\in \mathcal P(M)$,
and  let $x=(x_B)_{B \in \mathcal B}$ be a $(G,A)$-orthogonal set. The map $B
\mapsto B \cap M$ identifies  ${\{B \in
\mathcal B(A):B \subset P\}}$   with $\mathcal B^M(A)$, the set of Borel subgroups
in $M$ containing $A$.  The points 
$(x_B)_{\{B \in
\mathcal B(A):B \subset P\}}$ form an $(M,A)$-orthogonal set and
thus have the same image, call it $y_P$, 
 in $\mathfrak a_M$. 
The family $y=(y_P)_{P \in
\mathcal P(M)}$   is then  a $(G,M)$-orthogonal set in $\mathfrak a_M$,
positive if
$(x_B)$ is, and is called the $(G,M)$-orthogonal set \emph{associated} to
$x$.  

\subsection{Recognizing positive $(G,A)$-orthogonal sets coming from
$M$}\label{sub.rpga} In the
next lemma, which proves the equivalence of five conditions on a positive
$(G,A)$-orthogonal set $x$, the main  point is that the seemingly  weak
condition (4) implies the rather strong condition (1). When these five equivalent
conditions  hold, we say that $x$ \emph{comes from
$M$}.

\begin{lemma}\label{lem.gmo}
Let $x=(x_B)_{ B \in \mathcal B}$ be a positive $(G,A)$-orthogonal set, and let
$y=(y_P)_{P \in \mathcal P(M)}$ be the positive
$(G,M)$-orthogonal set associated to $x$. Then the following five conditions on
$x$ are equivalent. 
\begin{enumerate}
\item There exists a positive $(M,A)$-orthogonal set $z=(z_{B_M})_{B_M \in \mathcal
B^M(A)}$ such that $x_B=z_{B\cap M}$ for all $B \in \mathcal B(A)$.  
\item $p_M(x_{B_1})=p_M(x_{B_2})$  for all
$B_1,B_2
\in
\mathcal B(A)$.  
\item $y_{P_1}=y_{P_2}$ for all $P_1,P_2 \in \mathcal P(M)$. 
\item There exists $P \in \mathcal P(M)$ such that $y_P=y_{\bar P}$. 
\item $x_{B'}=x_{B''}$ when $B',B''$ are adjacent and $\beta_{B',B''}$ is not a
coroot of $M$. 
\end{enumerate}
\end{lemma}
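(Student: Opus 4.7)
The plan is to establish the cycle $(1) \Rightarrow (2) \Leftrightarrow (3) \Rightarrow (4) \Rightarrow (2)$ together with the separate equivalence $(1) \Leftrightarrow (5)$. The implications $(1) \Rightarrow (2)$, $(2) \Leftrightarrow (3)$, and $(3) \Rightarrow (4)$ are essentially bookkeeping. For $(1) \Rightarrow (2)$: if $x_B = z_{B \cap M}$ for a positive $(M,A)$-orthogonal set $z$, then successive differences in $z$ lie in the real span of coroots of $M$ (which equals $\ker p_M$), so $p_M(x_B)$ is independent of $B$. For $(2) \Leftrightarrow (3)$: by construction $y_P = p_M(x_B)$ for every $B \subset P$, so the two conditions coincide. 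And $(3) \Rightarrow (4)$ is immediate.

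The hard step is $(4) \Rightarrow (3)$, and my approach is a pointed-cone argument. For any $P' \in \mathcal P(M)$, positivity of the $(G,M)$-orthogonal set $y$ expresses $y_P - y_{P'}$ as a non-negative real combination of vectors $p_M(\alpha^\vee)$ with $\alpha \in R_N \cap R_{\bar N_{P'}}$, and symmetrically $y_{\bar P} - y_{P'}$ as such a combination with $\alpha \in R_{\bar N} \cap R_{\bar N_{P'}}$. Under the hypothesis $y_P = y_{\bar P}$, this common vector then lies both in the cone $C := \sum_{\alpha \in R_N} \mathbb R_{\geq 0}\, p_M(\alpha^\vee) \subset \mathfrak a_M$ and in $-C$. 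The critical ingredient is that $C$ is pointed: the linear form $\rho_P := \tfrac{1}{2}\sum_{\alpha \in R_N}\alpha$ is invariant under the Weyl group of $M$ (hence vanishes on coroots of $M$ and descends to a functional on $\mathfrak a_M$), and satisfies $\langle \rho_P, \alpha^\vee\rangle > 0$ for every $\alpha \in R_N$, forcing $C \cap (-C) = \{0\}$. Hence $y_{P'} = y_P$, giving (3).

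For $(1) \Leftrightarrow (5)$, the forward direction is direct: if $\beta_{B',B''} = \alpha^\vee$ with $\alpha \notin R_M$, then the non-negative multiple $x_{B'} - x_{B''} \in \mathbb R_{\geq 0}\,\alpha^\vee$ also equals $z_{B' \cap M} - z_{B'' \cap M}$, which lies in $\mathrm{span}_{\mathbb R}(R_M^\vee)$; since $R_M$ is a Levi subsystem of $R$, one has $\alpha^\vee \notin \mathrm{span}_{\mathbb R}(R_M^\vee)$ whenever $\alpha \notin R_M$, so the multiple must vanish. For $(5) \Rightarrow (1)$, I plan to show that $x$ is constant on each fiber of $B \mapsto B \cap M$; then $z_{B_M} := x_B$ (for any $B$ with $B \cap M = B_M$) is well defined, and its positive $(M,A)$-orthogonality is clear from the fact that adjacent Borels in $M$ can be realized via adjacencies in $\mathcal B(A)$ with $\beta$ a coroot of $M$. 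Given $B',B''$ in the same fiber, corresponding to parabolics $P', P'' \in \mathcal P(M)$, a minimal gallery from $B'$ to $B''$ in $\mathcal B(A)$ crosses walls bijectively labeled by the roots in $R_{N_{P'}} \cap R_{\bar N_{P''}}$, every one of which lies outside $R_M$. By (5) each step contributes zero displacement, so $x_{B'} = x_{B''}$.

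The main obstacle is verifying the pointed-cone fact $\langle \rho_P, \alpha^\vee\rangle > 0$ for $\alpha \in R_N$, together with the companion Levi-subsystem statement that $\alpha^\vee \notin \mathrm{span}_{\mathbb R}(R_M^\vee)$ whenever $\alpha \in R \setminus R_M$. Both are standard root-system facts, but they are the essential bridges converting the a priori weak hypotheses (4) and (5) into the strong conclusion (1).
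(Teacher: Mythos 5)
Your subproofs are individually correct---in particular the pointed-cone argument for $(4)\Rightarrow(3)$, using the character $\rho_P$ to show $C\cap(-C)=\{0\}$, is a clean way to prove that implication, and your treatment of $(1)\Leftrightarrow(5)$ matches the paper's in spirit. But the overall logical structure is incomplete: the arrows you establish are $(1)\Rightarrow(2)\Leftrightarrow(3)\Leftrightarrow(4)$ and $(1)\Leftrightarrow(5)$. This yields two equivalence classes, $\{1,5\}$ and $\{2,3,4\}$, together with a one-way arrow $\{1,5\}\Rightarrow\{2,3,4\}$, but no implication from $\{2,3,4\}$ back to $\{1,5\}$. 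Nothing in your outline proves, say, $(2)\Rightarrow(1)$ or $(4)\Rightarrow(5)$; in fact that reverse direction is precisely the ``weak condition implies strong condition'' content that makes the lemma useful, as the paper emphasizes just before stating it.

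The paper closes the cycle by proving $(4)\Rightarrow(5)$ directly: fix $B\subset P$, so $\bar B\subset\bar P$, take a minimal gallery $B=B_0,\dots,B_l=\bar B$, and write $0=p_M(x_B)-p_M(x_{\bar B})=\sum_i p_M(x_{B_{i-1}}-x_{B_i})$. Each summand is $\geq 0$ in the partial order on $\mathfrak a_M$ determined by $P$ (this is where a pointedness fact enters on the paper's side too), hence each summand is $0$; since $x_{B_{i-1}}-x_{B_i}=r_i\alpha_i^\vee$ with $r_i\geq 0$ and $p_M(\alpha_i^\vee)\neq 0$ whenever $\alpha_i\notin R_M$, one gets $r_i=0$ for those $i$, i.e., $x_{B_{i-1}}=x_{B_i}$ \emph{in} $\mathfrak a$, not just modulo the coroots of $M$. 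A splicing argument then shows that every adjacent pair with separating root outside $R_M$ arises in some such gallery. The crucial extra step, which your projection to $\mathfrak a_M$ never recovers, is lifting ``the image of a term vanishes in $\mathfrak a_M$'' to ``the term itself vanishes in $\mathfrak a$''---that is the passage from $(2)/(3)/(4)$ to $(5)$. Your cone argument stays entirely in $\mathfrak a_M$ and therefore cannot reach conclusion $(5)$ or $(1)$. To fix your proof, you could keep everything you have and add a direct proof of $(4)\Rightarrow(5)$ (or $(3)\Rightarrow(5)$) along the lines above; the pointedness fact you already established for the cone $C$ is exactly what is needed to justify the term-by-term vanishing.
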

\begin{proof} 
It is clear that  (1) implies (2):  the points $z_{B_M}$ have the same
image in
$\mathfrak a_M$, so the same is true of the points $x_B$. 

Next we check that (2) implies (3). For $i=1,2$ choose $B_i \in \mathcal B$ such
that
$B_i
\subset P_i$. Then 
\[
y_{P_1}=p_M(x_{B_1})=p_M(x_{B_2})=y_{P_2}.
\] 

After noting that   
(3) clearly implies (4), we next  show that (4) implies (5).  
 Choose some $B \in \mathcal B$ such that $B \subset
P$. (This  amounts to choosing an element in $\mathcal B^M(A)$.) It
then follows that $\bar B \subset \bar P$. Note that 
\[
p_M(x_B)=y_P=y_{\bar P}=p_M(x_{\bar B}),
\]
which proves that 
\[
p_M(x_B)=p_M(x_{\bar B}).
\]
On $\mathfrak a_M$ we have the usual partial order determined by $P=MN$: in this
partial order we have $y \ge y'$ iff $y-y'$ is a non-negative linear combination
of elements of the form $p_M(\alpha^{\vee})$ for some root $\alpha \in R_N$. 

Let $B=B_0,B_1,\dots,B_l=\bar B$ be any minimal gallery from $B$ to $\bar B$. Then 
\[
x_B-x_{\bar B}= \sum_{i=1}^l (x_{B_{i-1}}-x_{B_i}).
\]
Combining the last two displayed equations, we find that 
\[
0=\sum_{i=1}^l p_M(x_{B_{i-1}}-x_{B_i}), 
\] 
and since each term in the sum on the right hand side of this equation is $\ge 0$
in our partial order on $\mathfrak a_M$, we conclude that each term is $0$.
Letting $\alpha_i$ be the unique root that is positive for $B_{i-1}$ and negative
for $B_i$, we have $x_{B_{i-1}}-x_{B_i}=r_i\alpha_i^\vee$ for some non-negative
number $r_i$, and we know that 
\[
0= p_M(x_{B_{i-1}}-x_{B_i})=r_ip_M(\alpha_i^\vee).
\] 
This yields no information when $\alpha_i \in R_M$, but when $\alpha_i \notin R_M$,
then $p_M(\alpha_i^\vee) \ne 0$, and therefore $r_i=0$, from which it follows that
$x_{B_{i-1}}=x_{B_i}$. 

We now know that (5) is true for certain adjacent pairs
$B',B''$, namely those of the form $B_{i-1},B_i$ for some $i$. To  establish
(5) fully we will now check that any adjacent pair
$B',B''$ arises from a suitable minimal gallery from $B$ to $\bar B$. (It turns
out that we do not need to vary our initial choice of $B$.) We write
$l(B,B')$ for the common length of all minimal galleries joining $B$ and $B'$ (and
will use parallel notation for other pairs of Borel subgroups). Then $l(B,B')$ and
$l(B,B'')$ differ by $1$, since $B',B''$ are adjacent. Exchanging $B'$ with $B''$
if necessary, we can find a positive integer $i$ such that $l(B,B')=i-1$ and
$l(B,B'')=i$. Note that $l(B'',\bar B)=l-i$, where $l$ is the number of
positive roots in the root system of $G$. Splicing together any two minimal
galleries
$B=B_0,\dots,B_{i-1}=B'$ and
$B''=B_i,B_{i+1},\dots,B_l=\bar B$, we
obtain a minimal gallery joining $B,\bar B$ and an integer $i$ such that
$(B',B'')=(B_{i-1},B_i)$, as desired. This completes the proof that (4) implies
(5). 

It remains only to prove that (5) implies (1).  Let $B_1,B_2 \in \mathcal B$ and
suppose that $B_1\cap M=B_2\cap M$. Equivalently, we are supposing that no root in
$M$ separates
$B_1$ from
$B_2$. Then by choosing a minimal gallery joining $B_1,B_2$, we see from (5) that
$x_{B_1}=x_{B_2}$. Thus there exists a unique family $z=(z_{B_M})_{B_M \in
\mathcal B^M(A)}$ such that $x_B=z_{B\cap M}$. 

We just need to check that $z$  is a positive $(M,A)$-orthogonal set. So suppose
that $B_M',B_M'' \in \mathcal B^M(A)$ are adjacent, and let $\alpha$ be the unique
root in $M$ which is positive for $B_M'$ and negative for $B_M''$. Choose $B',B''
\in \mathcal B$ such that $B'\cap M=B'_M$ and $B''\cap M=B''_M$, and then choose a
minimal gallery from $B'$ to $B''$. For exactly one adjacent pair in this gallery
the separating root will be $\alpha$, and for the other adjacent pairs the
separating root will not be a root of $M$. From (5) (and using that $x$ is a
positive $(G,A)$-orthogonal set) we conclude that 
\[
x_{B'}-x_{B''}=0+\dots+0+r\alpha^\vee + 0+\dots+0
\]
for some non-negative number $r$. In other words
$z_{B'_M}-z_{B''_M}=r\alpha^\vee$, so that $z$ is indeed a positive
$(M,A)$-orthogonal set. 
\end{proof}

\begin{remark}
The lemma we just proved has some obvious variants. First, a similar result
applies to negative $(G,A)$-orthogonal sets $x$, as one sees by applying the lemma
to the positive $(G,A)$-orthogonal set $-x$. Second, there is no harm in allowing
our $(G,A)$-orthogonal sets to take values in an affine
space under $\mathfrak a$ (for example the apartment of $A$ in the enlarged
building for $G$ is such an affine space and cannot be identified with
$\mathfrak a$ without choosing a basepoint in that apartment). 
\end{remark}

\subsection{Bruhat-Tits building $\mathcal{BT}_G$} 
We are going to work with the enlarged Bruhat-Tits building $\mathcal{BT}_G$ of
$G$, though we are going to drop the word ``enlarged.'' The building for $G$ is
the cartesian product of the buildings of $A_G$ and $G/A_G$, the building for
$A_G$ being the affine space underlying the real vector space $X_*(A_G)_\mathbb
R\simeq\mathfrak a_G$. The building
for
$G$ is canonical up to isomorphism, but not up to \emph{unique} isomorphism, as
$\mathfrak a_G$ acts on $\mathcal{BT}_G$, preserving all its natural structures
(see \cite[4.2.16]{bruhat-tits84} for a discussion of the precise sense in which
$\mathfrak a_G$ is the automorphism group of the building). 

The apartment of $A$ in $\mathcal{BT}_G$ serves as a building for $A$, and thus we
will denote it by $\mathcal{BT}_A$. More generally (see
\cite[5.1.3]{bruhat-tits84}) for any Levi subgroup $M$ of $G$ containing $A$, the
subset $M(F)\mathcal{BT}_A$ of $\mathcal{BT}_G$ serves as a building
$\mathcal{BT}_M$ for $M$. 

\subsection{Retractions}  
Consider a Levi subgroup $M$ containing $A$ as well as a parabolic
subgroup $P=MN$ with Levi component $M$. We claim that there is an
$M(F)$-equivariant retraction $r_P:\mathcal{BT}_G \to \mathcal{BT}_M$, 
characterized by the following two properties: 
\begin{enumerate}
\item the restriction of $r_P$ to $\mathcal{BT}_M$ is the identity map on
$\mathcal{BT}_M$, and 
\item $r_P(nx)=r_P(x)$ for all $n \in N(F)$, $x \in \mathcal{BT}_G,$  
\end{enumerate}
the point being that the composed map 
\begin{equation}\label{eq.retdef}
\mathcal{BT}_M \hookrightarrow \mathcal{BT}_G \twoheadrightarrow
N(F)\backslash\mathcal{BT}_G 
\end{equation}
is a bijection. When $M=A$, so that $P$ is a Borel subgroup of $G$, the retraction 
$r_P$ was introduced by Bruhat-Tits
in \cite[2.9]{bruhat-tits72}. 

We now check that the existence of retractions $r_B$ with respect to Borel
subgroups implies the existence of $r_P$.  For this we choose a Borel subgroup
$B=AU$ such that $A
\subset B
\subset P$, and we denote by
$B_M=AU_M$ the Borel subgroup of $M$ obtained as the intersection of $B$ with $M$;
of course
$U=NU_M$. 

To see that \eqref{eq.retdef} is surjective, we write $x \in
\mathcal{BT}_G$ as $x=u\lambda$ for $u \in U(F)$ and $\lambda \in \mathcal{BT}_A$. 
Decomposing $u$ as $nu_M$ with $n \in N(F)$ and  $u_M \in U_M(F)$, we see that
$x=n(u_M\lambda) \in N(F)\mathcal{BT}_M$. 

To see that \eqref{eq.retdef} is
injective, we consider $x,x' \in \mathcal{BT}_M$, and we assume that there exists
$n \in N(F)$ such that $x'=nx$. We must show that $x'=x$. Using the surjectivity
of \eqref{eq.retdef} for $(M,B_M)$, we may write $x=u_M\lambda$,
$x'=u_M'\lambda'$ for $u_M,u_M' \in U_M(F)$, $\lambda,\lambda' \in
\mathcal{BT}_A$. Therefore $u_M'\lambda'=nu_M\lambda$, and the injectivity of
\eqref{eq.retdef} for $(G,B)$ implies that $\lambda'=\lambda$ and that
$u_M'^{-1}nu_M$ lies in the stabilizer of $\lambda$ in $U(F)$, namely  
$U(F) \cap K_\lambda $. Here we have written $K_\lambda$ for the parahoric
subgroup of $G(F)$ determined by $\lambda$, and we used that $U(F)$ essentially
lives in the simply connected cover of the derived group, where parahoric
subgroups are actually stabilizers of points in the building. Now we are going to
use the decomposition 
\[
U(F)\cap K_\lambda=\bigl(N(F)\cap K_\lambda\bigr)\bigl(U_M(F) \cap
K_\lambda\bigr), 
\]
a consequence of the fact that Bruhat and Tits define the $\mathfrak o$-form 
$G_\lambda$ (the one such that $K_\lambda=G_\lambda(\mathfrak o)$) in such a way as
to be compatible with the factorization of $U$ as a product of root groups (see
3.8.1, 3.8.3 and 4.6.2 in \cite{bruhat-tits84}). Applying this fact to
$u_M'^{-1}nu_M$, we find that
$(u_M')^{-1}u_M
\in K_\lambda$, and hence that $x'=x$. 

\subsection{Obvious compatibilities among retractions}
Given a Borel
subgroup 
$B$ of $G$ containing $A$, we obtain  a Borel subgroup $B \cap M$ of $M$
containing $A$. We then have the following compatibility between the retraction
$r_B$ and its analog $r^M_{B\cap M}:\mathcal{BT}_M \to \mathcal{BT}_A$ for $M$:
\begin{equation}\label{eq.cptMG} 
r_B(x)=r^M_{B \cap M}(x)
\end{equation}
for any $x$ in the subset $\mathcal{BT}_M$ of $\mathcal{BT}_G$. 

In the special case that $B \subset P$, the compatibility \eqref{eq.cptMG} can be
generalized to  
\begin{equation}\label{eq.cptPG} 
r_B=r^M_{B \cap M}\circ r_P.
\end{equation}

\subsection{The  $(G,A)$-orthogonal set in $\mathcal{BT}_A$ determined by an
element in
$\mathcal {BT}_G$} 
 For fixed $x \in \mathcal{BT}_G$ we may let $B=AU$ vary through the set
$\mathcal B(A)$ of  Borel subgroups of $G$ containing $A$, thus obtaining a
family
$x_B:=r_B(x)$ of points in $\mathcal{BT}_A$, one for each $B \in \mathcal B(A)$
(cmp. \cite{arthur76,harish-chandra66b}). The standard method of reduction to the
case of
$SL_2$ shows that
$(x_B)$ is a  negative $(G,A)$-orthogonal set in $\mathcal{BT}_A$. That this
orthogonal set is negative rather than positive is due to the  convention made by
Bruhat and Tits,  that for any cocharacter $\mu$ of $A$, the element $\mu(\varpi)
\in A(F)$ acts on
$\mathcal{BT}_A$ by translation by the negative of $\mu$.  

\begin{proposition}
Let $x \in \mathcal{BT}_G$ and let $B=AU$, $B'=AU'$ be Borel subgroups of $G$
containing $A$. Then $r_B(x)=r_{B'}(x)$ if and only if $x \in \bigl((U(F) \cap
U'(F)\bigr)\mathcal{BT}_A$. 
\end{proposition}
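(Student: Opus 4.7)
The plan is to verify the two implications directly, with the nontrivial direction resting on a root-group decomposition of $U$ relative to $B'$ together with the Iwahori-style factorization of the parahoric $K_\lambda$.

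For the easy implication, if $x = v\lambda$ with $v \in U(F)\cap U'(F)$ and $\lambda \in \mathcal{BT}_A$, then $r_B(x) = r_B(v\lambda) = \lambda$ because $v \in U(F)$, and similarly $r_{B'}(x) = \lambda$, so the two retractions agree.

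For the converse, suppose $r_B(x) = r_{B'}(x) = \lambda$. The surjectivity built into the construction of each retraction lets me write $x = u\lambda = u'\lambda$ for some $u \in U(F)$ and $u' \in U'(F)$. The key algebraic input is the decomposition $U(F) = (U(F) \cap U'(F)) \cdot (U(F) \cap \bar U'(F))$, where $\bar U'$ is the unipotent radical of the Borel opposite to $B'$ containing $A$. This decomposition follows from the fact that both $R^+ \cap {R'}^+$ and its complement $R^+ \cap {R'}^-$ in $R^+$ are closed subsets of the root system (sums of roots positive for $B$ and positive for $B'$ are again positive for both, and similarly for the complement). Writing $u = u_+ u_-$ with $u_+ \in U(F) \cap U'(F)$ and $u_- \in U(F) \cap \bar U'(F)$, it suffices to show $u_- \lambda = \lambda$, for then $x = u_+ u_- \lambda = u_+ \lambda$ lies in $(U(F) \cap U'(F))\mathcal{BT}_A$.

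To prove $u_- \lambda = \lambda$: from $u\lambda = u'\lambda$ one obtains $u_- \lambda = (u_+^{-1}u')\lambda$, so the element $g := u_-^{-1}(u_+^{-1}u')$ lies in the stabilizer $K_\lambda$ of $\lambda$. Moreover $g$ sits inside the open Bruhat cell $\bar U'(F) A(F) U'(F)$, and its unique decomposition there has trivial $A$-part, namely $g = u_-^{-1} \cdot 1 \cdot (u_+^{-1}u')$. Appealing to the Iwahori-style decomposition
\[
K_\lambda \cap \bar U'(F) A(F) U'(F) = (K_\lambda \cap \bar U'(F))(K_\lambda \cap A(F))(K_\lambda \cap U'(F)),
\]
which is a direct consequence of the compatibility of the $\mathfrak{o}$-form $G_\lambda$ with the root group factorization already invoked in the construction of $r_P$, we obtain $u_-^{-1} \in K_\lambda \cap \bar U'(F)$, so $u_- \lambda = \lambda$, and the proof concludes.

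The main obstacle is the appeal to the Iwahori-style decomposition of $K_\lambda$ intersected with the big cell; this is not a new ingredient but a restatement of the Bruhat-Tits compatibility already used to construct $r_P$, so once that is in hand the remainder of the argument is formal root-group manipulation.
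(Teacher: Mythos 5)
Your argument takes a genuinely different route from the paper's (which uses induction on the length of a minimal gallery from $B$ to $B'$ and a rank-$1$ reduction to the $SL_2$ tree), and the reduction via the decomposition $U(F)=(U(F)\cap U'(F))(U(F)\cap \bar U'(F))$ is a real simplification of the combinatorics. However, there is a genuine gap at the crucial step.

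The claimed ``Iwahori-style decomposition''
\[
K_\lambda \cap \bar U'(F) A(F) U'(F) = (K_\lambda \cap \bar U'(F))(K_\lambda \cap A(F))(K_\lambda \cap U'(F))
\]
is \emph{false} for a general parahoric $K_\lambda$. For instance, take $G=SL_2$ and $K_\lambda = SL_2(\mathfrak o)$. The element $g=\begin{pmatrix}\varpi & 1\\ -1 & 0\end{pmatrix}$ lies in $K_\lambda$ and in the big cell, with factorization
\[
g=\begin{pmatrix}1 & 0\\ -\varpi^{-1} & 1\end{pmatrix}\begin{pmatrix}\varpi & 0\\ 0 & \varpi^{-1}\end{pmatrix}\begin{pmatrix}1 & \varpi^{-1}\\ 0 & 1\end{pmatrix},
\]
and none of the three factors lies in $K_\lambda$. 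The Bruhat--Tits compatibility cited in the construction of $r_P$ tells you that the big cell of the group scheme $G_\lambda$ is an open $\mathfrak o$-subscheme with the product structure; it does \emph{not} tell you that an element of $K_\lambda$ lying in the big cell over $F$ also lies in the big cell over $\mathfrak o$ (equivalently, that its reduction mod $\varpi$ lies in the big cell of the special fiber), and the above example shows this can fail.

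You do only need the factorization for your particular $g$, which has trivial $A$-part, and for that restricted statement the counterexample above does not apply. But the trivial-$A$-part statement is no longer an algebraic formality: it is in fact \emph{equivalent} to the proposition itself in the special case where the two Borels are opposite. (If $g=\bar v\, v\in K_\lambda$ with $\bar v\in\bar U'(F)$, $v\in U'(F)$, then $y:=v\lambda=\bar v^{-1}\lambda$ satisfies $r_{B'}(y)=r_{\bar B'}(y)=\lambda$, and concluding $\bar v,v\in K_\lambda$ amounts to concluding $y=\lambda$, i.e.\ $y\in\mathcal{BT}_A$, which is exactly the proposition applied to $B'$ and $\bar B'$.) So your manipulation is a legitimate reduction of the general pair $(B,B')$ to the opposite pair $(B',\bar B')$, but some genuine building-theoretic input — the paper uses the $SL_2$ fact that the distance between the two retractions of a point into the standard apartment is twice its distance to that apartment — is still needed to close the opposite-Borel case, and your appeal to the $\mathfrak o$-form compatibility does not supply it.
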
 

\begin{proof}
($\Longleftarrow$) Say $x = vx_0$ with $v \in U(F) \cap U'(F)$ and $x_0 \in
\mathcal{BT}_A$. Then
$r_B(x)=x_0=r_{B'}(x)$. 

($\Longrightarrow$) Consider a minimal gallery $B=B_0,B_1,\dots,B_r=B'$ (of Borel
subgroups $B_i=AU_i$) joining $B$ to $B'$. The proof uses induction on $r$. When
$r=0$, we just need to note that $U(F)\mathcal{BT}_A=\mathcal{BT}_G$. 

For $r>0$ we first note that the vectors $r_B(x)-r_{B_1}(x)$ and
$r_{B_1}(x)-r_{B'}(x)$ are both $\le 0$ with respect to the partial order on
$\mathfrak a$ obtained from $B$; since their sum is zero, they are individually
zero, which implies that $r_{B_1}(x)=r_{B_r}(x)$. By induction we conclude that $x$
lies in $\bigl((U_1(F) \cap
U'(F)\bigr)\mathcal{BT}_A$. 

Now $U_1\cap U'=(U\cap U')U_\alpha$, where $U_\alpha$ is the root subgroup for the
unique root $\alpha$ of $A$ in $G$ that is positive for $B$ and negative for
$B_1$. Thus $x=vu_\alpha x_0$ for some $v \in U(F) \cap U'(F)$, $u_\alpha  \in
U_\alpha(F)$, $x_0 \in \mathcal{BT}_A$. Put $y=u_\alpha x_0$; we just need to show
that $y \in \mathcal{BT}_A$. Let $M$ be the Levi subgroup of $G$ containing $A$
whose root system is $\{\alpha,-\alpha\}$; the derived group of $M$ is then
isomorphic to either $SL_2$ or $PGL_2$. In any case $y  \in \mathcal{BT}_M$, and
the compatibility \eqref{eq.cptMG} shows that the images of $y$ under the two
retractions $\mathcal{BT}_M \to \mathcal{BT}_A$ are equal. We conclude that $y \in
\mathcal{BT}_A$, as desired, by making use of the fact that for any $z$ in the
building for $SL_2$, the distance between the two retractions of $z$ into the
standard apartment is twice the distance from $z$ to that standard apartment. 
\end{proof} 

\begin{corollary}\label{cor.recog}
Let $x \in \mathcal{BT}_G$. Then $x \in \mathcal{BT}_M$ if and only if the
negative $(G,A)$-orthogonal set $(r_B(x))_{B \in \mathcal B(A)}$ comes from $M$
in the sense of the definition made at the beginning of subsection
\textup{\ref{sub.rpga}}. 
\end{corollary}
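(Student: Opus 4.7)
The forward direction will be immediate. Assuming $x \in \mathcal{BT}_M$, I set $z_{B_M} := r^M_{B_M}(x)$ for $B_M \in \mathcal{B}^M(A)$; applying the construction of subsection 4.7 inside the building of $M$ shows $z$ is a negative $(M,A)$-orthogonal set in $\mathcal{BT}_A$. The compatibility \eqref{eq.cptMG} then gives $r_B(x) = r^M_{B \cap M}(x) = z_{B \cap M}$ for every $B \in \mathcal{B}(A)$, which is the negative analog of condition (1) in Lemma \ref{lem.gmo}.

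For the backward direction, my plan is to exhibit two Borel subgroups of $G$ whose unipotent radicals intersect exactly in $U_M$, and to force $x$ into that intersection via the preceding Proposition. Suppose the orthogonal set comes from $M$, so by the negative analog of condition (1) there is a negative $(M,A)$-orthogonal set $z$ with $r_B(x) = z_{B \cap M}$. I fix any $B_M = A U_M \in \mathcal{B}^M(A)$ and any $P = MN \in \mathcal{P}(M)$, and form the Borel subgroups
\[
B' := A U_M N, \qquad B'' := A U_M \bar N
\]
of $G$; both contain $A$, and one checks $B' \cap M = B'' \cap M = B_M$. Writing $U'$ and $U''$ for their unipotent radicals, condition (1) gives $r_{B'}(x) = z_{B_M} = r_{B''}(x)$, so the Proposition yields $x \in (U'(F) \cap U''(F)) \cdot \mathcal{BT}_A$.

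The crux is then the purely group-theoretic computation
\[
U'(F) \cap U''(F) = (U_M N)(F) \cap (U_M \bar N)(F) = U_M(F),
\]
which I would verify using the uniqueness of factorization in the big cell $\bar N \cdot M \cdot N$: a common factorization $u_M n = u_M' \bar n$ forces $(u_M')^{-1} u_M = \bar n n^{-1} \in M \cap \bar N N = \{1\}$, hence $n = \bar n = 1$ and $u_M = u_M'$. Once this is in hand, we have $x \in U_M(F) \cdot \mathcal{BT}_A \subset M(F) \cdot \mathcal{BT}_A = \mathcal{BT}_M$, completing the argument. Because the bulk of the work has already been packaged into the Proposition and Lemma \ref{lem.gmo}, I do not anticipate a serious obstacle beyond this standard Bruhat-type intersection identity.
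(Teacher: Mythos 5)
Your proof is correct and follows essentially the same route as the paper: same forward direction via the compatibility \eqref{eq.cptMG}, and for the converse the same two Borel subgroups $A U_M N \subset P$ and $A U_M \bar N \subset \bar P$, with the Proposition applied to the coincidence of their retractions. The only cosmetic differences are that the paper gets $r_{B'}(x)=r_{B''}(x)$ from condition (5) of Lemma~\ref{lem.gmo} rather than condition (1), and it asserts $U'\cap U''=U_M$ without spelling out the big-cell factorization argument you supply.
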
 
\begin{proof}
($\Longrightarrow$)
It is clear from compatibility \eqref{eq.cptMG} that $(r_B(x))_{B \in \mathcal
B(A)}$ comes from $(r^M_{B_M}(x))_{B_M  \in \mathcal B^M(A)}$.  

 ($\Longleftarrow$) Suppose that $(r_B(x))_{B \in \mathcal
B(A)}$ comes from $M$.  Lemma \ref{lem.gmo} implies that
$r_{B_1}(x)=r_{B_2}(x)$ whenever $B_1,B_2 \in \mathcal B(A)$ are adjacent and
separated by a root for $G$ that is not a root for $M$. Choose a parabolic
subgroup $P=MN$ with Levi component $M$, and consider also the opposite parabolic
subgroup $\bar P=M\bar N$. Choose $B_M=AU_M \in \mathcal B^M(A)$. Then there are
unique $B=AU, B'=AU'\in \mathcal B(A)$ such that $B \subset P$, $ B' \subset
\bar P$ and $B \cap M=B_M=B'\cap M$. Note that $U \cap U'=U_M$. Since $B \cap
M=B'\cap M$, no root of
$G$ that separates $B$ and $B'$ is a root of $M$, and therefore $r_B(x)=r_{B'}(x)$.
By the previous proposition we conclude that \[x \in \bigl((U(F) \cap
U'(F)\bigr)\mathcal{BT}_A=U_M(F)\mathcal{BT}_A=\mathcal{BT}_M.
\]
\end{proof}

\begin{remark}\label{rmk.error}
The previous corollary is a generalization of Lemma 2.1 in \cite{kottwitz03}. It
should be noted that the proof of that lemma in \cite{kottwitz03} is slightly
wrong. The error occurs in the next to last sentence of the proof. 
\end{remark}

\section{Compatibility of $\mathfrak k_x$ with $M$ usually implies that
$x
\in \mathcal{BT}_M$}

We retain the notation of section \ref{sec4} (see \ref{r.1}). In particular $G$ is
a split connected reductive group over $F$ with split maximal torus $A$. In
addition we will make use of $\mathfrak g=\Lie G$, a Lie algebra over the ground
field
$F$. 

\subsection{Review of parahoric subalgebras of $\mathfrak g$}\label{sub.para} 
Since $A$ is split, it extends canonically to a smooth group scheme over
$\mathfrak o$, all of whose geometric fibers are tori, and in this way we obtain a
lattice $\mathfrak a(\mathfrak o)$ in $\mathfrak a$, namely the Lie algebra of the
group scheme $A$ over $\mathfrak o$. 

To any point $x \in \mathcal{BT}_G$ Bruhat and Tits \cite{bruhat-tits84} associate
a smooth group scheme
$G_x$ over $\mathfrak o$, and $K_x:=G_x(\mathfrak o)$ is parahoric subgroup of
$G(F)$. Actually Bruhat-Tits define a number of variants of $G_x$; we will use the
one for which the special fiber is connected and which is fabricated using the
$\mathfrak o$-form of $A$ we just discussed.  The Lie algebra
$\mathfrak k_x$ of $G_x$ is a parahoric subalgebra of
$\mathfrak g$. When it is
necessary to indicate which group we are working with, we write $\mathfrak k^G_x$
instead of $\mathfrak k_x$. 
When $x$ lies in the apartment of $A$, the parahoric
subalgebra
$\mathfrak k_x$ is compatible with the root space decomposition of $\mathfrak g$,
in the sense that it is the direct sum of $\mathfrak a(\mathfrak o)$ and its
intersections with the various root spaces in $\mathfrak g$, and for any Levi
subgroup $M$ containing $A$ we have    
 $\mathfrak k^M_x=\mathfrak k_x \cap \mathfrak m$,
where $\mathfrak m=\Lie M$, again a Lie algebra over $F$.  

\subsection{Notion of compatibility of $\mathfrak k_x$ with $M$} 
Let $M$ be a Levi subgroup of $G$ containing $A$, and let $A_M$ be identity
component of the center of $M$. Of course $A_M$ is a split torus, since $G$ is a
split group. We then have the canonical direct sum decomposition 
$\mathfrak g=\mathfrak m \oplus \mathfrak m^{\perp}$, where $\mathfrak m^{\perp}$
is by definition the direct sum of all the non-zero weight spaces for $A_M$ on
$\mathfrak g$. (When the Killing form is non-degenerate, $\mathfrak m^{\perp}$ is
actually the subspace perpendicular to $\mathfrak m$ under the Killing form, which
may help to explain the  notation we chose.) We write $q_M:\mathfrak g
\twoheadrightarrow \mathfrak m$ for the projection map obtained from the direct
sum decomposition $\mathfrak g=\mathfrak m \oplus \mathfrak m^{\perp}$; when it is
necessary to indicate which group we are working with, we write $q_M^G$ instead
of $q_M$. It is obvious that $q_A=q^M_A \circ q_M$. 

For $x \in \mathcal{BT}_G$ we say that $\mathfrak k_x$ is \emph{compatible} with
$M$ if $\mathfrak k_x$ is the direct sum of its intersections with $\mathfrak m$
and $\mathfrak m^{\perp}$, or, equivalently, if $\mathfrak k_x\cap \mathfrak
m=q_M(\mathfrak k_x)$. 
Most of the following lemma will be used only in the proof of the next theorem and
is of little independent interest. However the fourth part of the lemma will be
used again later. 

\begin{lemma}\label{lem.handy}
Let $P=MN$ be a parabolic subgroup with $M$ as Levi component. Let $x \in
\mathcal{BT}_G$ and put $y=r_P(x)$. Then the following six statements hold. 
\begin{enumerate} 
\item If $x$ lies in the subset $\mathcal{BT}_M$ of $\mathcal{BT}_G$, then
$\mathfrak k_x$ is compatible with $M$. 
\item $\mathfrak k^M_y=q_M(\mathfrak k_x \cap\mathfrak p)$.  
\item $\mathfrak k_x \cap \mathfrak m \subset \mathfrak k^M_y \subset
q_M(\mathfrak k_x)$. Thus $\mathfrak k_x$ is compatible with $M$ if and only if 
$\mathfrak k_x \cap \mathfrak m = \mathfrak k^M_y =
q_M(\mathfrak k_x)$.  
\item $\mathfrak k_x \cap \mathfrak a \subset \mathfrak k^M_y \cap \mathfrak a 
\subset
\mathfrak a(\mathfrak o) \subset q^M_A(\mathfrak k^M_y) \subset q_A(\mathfrak
k_x)$. 
\item If $\mathfrak k_x$ is compatible with $A$, then $\mathfrak k^M_y$ is
compatible with $A$. 
\item If $\mathfrak k_x$ is compatible with $M$ and $\mathfrak k^M_y$ is
compatible with $A$, then $\mathfrak k_x$ is compatible with $A$. 
\end{enumerate}
\end{lemma}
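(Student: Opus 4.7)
The plan is to handle the six parts in order, since (3)--(6) reduce fairly mechanically to (1) and (2), so the real work is concentrated in (2).

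For (1), I would use $\mathcal{BT}_M = M(F)\mathcal{BT}_A$ to write $x = mx_0$ with $m \in M(F)$ and $x_0 \in \mathcal{BT}_A$, so that $\mathfrak{k}_x = \Ad(m)\mathfrak{k}_{x_0}$. The apartment parahoric $\mathfrak{k}_{x_0}$ is a direct sum over $A$-weight spaces and hence splits as $(\mathfrak{k}_{x_0}\cap\mathfrak{m})\oplus(\mathfrak{k}_{x_0}\cap\mathfrak{m}^{\perp})$. Since $m \in M(F)$ normalizes each of the $M$-stable summands $\mathfrak{m}$ and $\mathfrak{m}^{\perp}$, applying $\Ad(m)$ preserves this decomposition, giving compatibility of $\mathfrak{k}_x$ with $M$.

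For (2), the identity $\mathfrak{k}^M_y = q_M(\mathfrak{k}_x \cap \mathfrak{p})$ is the core Bruhat--Tits input, and this is where the main difficulty lies. My plan is to derive it by combining the retraction compatibility \eqref{eq.cptPG} --- which factors $r_B$ as $r^M_{B \cap M} \circ r_P$ for any Borel $B = AU$ with $A \subset B \subset P$ --- with the apartment description of parahoric subalgebras as direct sums of filtered root-space contributions. This reduces the claim to the apartment case, where $q_M$ exactly discards the $\mathfrak{n}$-contributions of $\mathfrak{k}_x \cap \mathfrak{p}$ while retaining the $\mathfrak{m}$-contributions, and the latter are precisely the root-space pieces of $\mathfrak{k}^M_y$. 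At the group level this mirrors the standard Bruhat--Tits description of $K^M_y$ as the image of $K_x \cap P(F)$ under the projection $P \to M$.

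Parts (3) and (4) are bookkeeping once (2) is available. For (3): $\mathfrak{k}_x\cap\mathfrak{m}\subset\mathfrak{k}_x\cap\mathfrak{p}$ and is fixed by $q_M$, giving the first inclusion via (2); the second is tautological; and the equivalence with $M$-compatibility is immediate from the definition, since $\mathfrak{k}_x\cap\mathfrak{m} = q_M(\mathfrak{k}_x)$ says precisely that every $v \in \mathfrak{k}_x$ decomposes as $q_M(v) + (v - q_M(v))$ with both summands in $\mathfrak{k}_x$. For (4), the leftmost inclusion uses $\mathfrak{a}\subset\mathfrak{m}$ together with $\mathfrak{k}_x\cap\mathfrak{m}\subset\mathfrak{k}^M_y$ from (3); the rightmost uses $q_A = q^M_A\circ q_M$ combined with $\mathfrak{k}^M_y\subset q_M(\mathfrak{k}_x)$. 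For the two middle inclusions I would apply the $M$-internal versions of (2) and (3) to any Borel $B^M_0 = AU^M_0$ of $M$ (so that $A$ is its Levi): setting $z = r_{B^M_0}(y) \in \mathcal{BT}_A$, one has $\mathfrak{k}^A_z = \mathfrak{a}(\mathfrak{o})$ (the unique parahoric of the torus $A$), and the $M$-internal (3) then sandwiches $\mathfrak{a}(\mathfrak{o})$ as $\mathfrak{k}^M_y\cap\mathfrak{a}\subset\mathfrak{a}(\mathfrak{o})\subset q^M_A(\mathfrak{k}^M_y)$.

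For (5), compatibility of $\mathfrak{k}_x$ with $A$ means $\mathfrak{k}_x\cap\mathfrak{a} = q_A(\mathfrak{k}_x)$, so the chain (4) collapses to equality throughout, in particular yielding $\mathfrak{k}^M_y\cap\mathfrak{a} = q^M_A(\mathfrak{k}^M_y)$, i.e., compatibility of $\mathfrak{k}^M_y$ with $A$. For (6), $M$-compatibility of $\mathfrak{k}_x$ gives $\mathfrak{k}_x\cap\mathfrak{m} = \mathfrak{k}^M_y$ via (3), and then
\[
q_A(\mathfrak{k}_x) = q^M_A(q_M(\mathfrak{k}_x)) = q^M_A(\mathfrak{k}^M_y) = \mathfrak{k}^M_y\cap\mathfrak{a} = \mathfrak{k}_x\cap\mathfrak{a},
\]
using $A$-compatibility of $\mathfrak{k}^M_y$ in the middle and $\mathfrak{a}\subset\mathfrak{m}$ at the end. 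The main obstacle throughout is (2): everything else is formal once (2) and its $M$-internal analog are in hand.
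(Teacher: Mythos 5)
Your plan matches the paper's proof: (1) by $M(F)$-equivariance reduce to the apartment case, where the root-space description gives compatibility; (3)--(6) are formal consequences of (2) and its $M$-internal analogue, exactly as you outline (including the sandwich argument in (4), collapsing it in (5), and splicing $M$-compatibility of $\mathfrak{k}_x$ with $A$-compatibility of $\mathfrak{k}^M_y$ in (6)). The one place where your sketch is telegraphic is (2): invoking \eqref{eq.cptPG} alone does not by itself effect the reduction to the apartment. What actually does the work, and what the paper uses, is the Iwasawa-type decomposition $x = nm\lambda$ with $n \in N(F)$, $m \in M(F)$, $\lambda \in \mathcal{BT}_A$, from which $y = r_P(x) = m\lambda$; then $\mathfrak{k}_x \cap \mathfrak{p} = \Ad(nm)(\mathfrak{k}_\lambda \cap \mathfrak{p})$ since $\mathfrak{p}$ is $\Ad P(F)$-stable, and the $P(F)$-equivariance of $q_M$ restricted to $\mathfrak{p}$ (with $N(F)$ acting trivially through $q_M$) gives $q_M(\mathfrak{k}_x \cap \mathfrak{p}) = \Ad(m)\,q_M(\mathfrak{k}_\lambda \cap \mathfrak{p}) = \Ad(m)\mathfrak{k}^M_\lambda = \mathfrak{k}^M_y$, the last step using the apartment identity $q_M(\mathfrak{k}_\lambda \cap \mathfrak{p}) = \mathfrak{k}^M_\lambda$ that you correctly identified. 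So your plan is sound; just replace the appeal to \eqref{eq.cptPG} by the explicit $x = nm\lambda$ computation and the equivariance of $q_M|_{\mathfrak{p}}$.
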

\begin{proof}
(1) Since the set of $x$ such that $\mathfrak k_x$ is compatible with $M$ is
stable under the action of $M(F)$ on $\mathcal{BT}_G$, it is enough to show that
$\mathfrak k_x$ is compatible with $M$ when $x \in \mathcal{BT}_A$, and this
follows  from the discussion at the end of subsection \ref{sub.para}.

(2)  Say $x=nm\lambda$ for $m \in M(F)$, $n \in N(F)$, $\lambda \in
\mathcal{BT}_A$. Then $y=m\lambda$ and both sides of the equality we are trying to
prove  equal $m\mathfrak k^M_\lambda m^{-1}$. Here we used that $\mathfrak p$
is stable under the adjoint action of $P(F)$, that the restriction of $q_M$ to
$\mathfrak p$ is
$P(F)$-equivariant, and that $q_M(\mathfrak k_\lambda \cap \mathfrak p)=\mathfrak
k^M_\lambda$, a consequence of the discussion at the end of subsection
\ref{sub.para}.

(3) The first statement follows from (2), and the second follows from the first. 

(4)  The first
inclusion in (4) comes from intersecting the first inclusion in (3) with
$\mathfrak a$.  The last inclusion in (4) comes from applying $q^M_A$ to the second
inclusion in (3). The middle two inclusions in (4) come from
applying (3) to
$(M,A,y)$ (rather than $(G,M,x)$). 

(5) Assume that $\mathfrak k_x$ is compatible with $A$. Then we have equality of
the first and last lattices in (4). Therefore we also have equality of the second
and fourth lattices in (4), which is to say that $\mathfrak k^M_y$ is compatible
with $A$. 

(6) When $\mathfrak k_x$ is compatible with $M$, the reasoning used in the proof
of (4) shows that the first and last inclusions in (4) are equalities. When
$\mathfrak k^M_y$ is compatible with $A$,  the middle two inclusions in (4) are
also equalities. Therefore all the lattices in (4) are equal, and $\mathfrak k_x$
is compatible with $A$. 
\end{proof}

\begin{theorem}\label{thm.comim}
Let $x \in
\mathcal{BT}_G$. If $x \in 
\mathcal{BT}_M$, then $\mathfrak k_x$ is compatible with $M$. The converse is true
under any of the following three assumptions. 
\begin{enumerate}
\item The residue field of $\mathfrak o$ is not of characteristic $2$. 
\item The center of $G$ is a torus.  This is stronger than the assumption that the
center of
$G$ is connected, since we  mean the center in the scheme theoretic sense;
for example in characteristic  $2$ the center of $SL_2$ is connected, but 
not a torus.   
\item The point $x$ is special in the sense that its retractions into $\mathcal
{BT}_A$ are special points in that apartment. We remind the reader that  $x$
is special if and only if every geometric fiber of the group scheme $G_x$ is a 
connected  reductive group. 
\end{enumerate} 
\end{theorem}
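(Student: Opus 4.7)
The forward implication is immediate from Lemma \ref{lem.handy}(1), so the work is in the converse. The strategy is to combine Corollary \ref{cor.recog} with a reduction to groups of semisimple rank one.

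Assume $\mathfrak{k}_x$ is compatible with $M$. By Corollary \ref{cor.recog}, it suffices to show that the negative $(G,A)$-orthogonal set $(r_B(x))_{B \in \mathcal{B}(A)}$ comes from $M$. Using the negative analog of Lemma \ref{lem.gmo}(5) (see the remark following that lemma), this reduces to the following local claim: for every adjacent pair $B',B'' \in \mathcal{B}(A)$ whose separating coroot $\beta_{B',B''} = \alpha^\vee$ corresponds to a root $\alpha \notin R_M$, one has $r_{B'}(x)=r_{B''}(x)$.

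Fix such a pair and let $L$ be the Levi subgroup of $G$ containing $A$ whose root system is $\{\pm\alpha\}$. Its derived group is isomorphic to $SL_2$ or $PGL_2$. Choose a parabolic $Q=LN_Q$ with $B'\subset Q$; then $B''\subset \bar Q$. Set $z:=r_Q(x) \in \mathcal{BT}_L$. Using the compatibility \eqref{eq.cptPG} for both $B'$ and $B''$, we get $r_{B'}(x)=r^L_{B'\cap L}(z)$ and $r_{B''}(x)=r^L_{B''\cap L}(z)$. Since $B'\cap L$ and $B''\cap L$ are the two opposite Borel subgroups of $L$, their unipotent radicals intersect trivially, so by the proposition preceding Corollary \ref{cor.recog} applied inside $L$, the desired equality is equivalent to $z \in \mathcal{BT}_A$.

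Next I would show that the $M$-compatibility of $\mathfrak{k}_x$ propagates to $A$-compatibility of $\mathfrak{k}^L_z$. Because $\alpha\notin R_M$, we have $R_L\cap R_M=\emptyset$ and hence $L\cap M=A$, so the decomposition $\mathfrak{g}=\mathfrak{m}\oplus\mathfrak{m}^\perp$ restricts to $\mathfrak{l}=\mathfrak{a}\oplus(\mathfrak{g}_\alpha\oplus\mathfrak{g}_{-\alpha})$ with $\mathfrak{l}\cap\mathfrak{m}=\mathfrak{a}$ and $\mathfrak{l}\cap\mathfrak{m}^\perp=\mathfrak{g}_\alpha\oplus\mathfrak{g}_{-\alpha}$. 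The parabolic $\mathfrak{q}=\mathfrak{l}+\mathfrak{n}_Q$ likewise splits as $(\mathfrak{q}\cap\mathfrak{m})\oplus(\mathfrak{q}\cap\mathfrak{m}^\perp)$, so $M$-compatibility of $\mathfrak{k}_x$ gives a splitting $\mathfrak{k}_x\cap\mathfrak{q}=(\mathfrak{k}_x\cap\mathfrak{q}\cap\mathfrak{m})\oplus(\mathfrak{k}_x\cap\mathfrak{q}\cap\mathfrak{m}^\perp)$. Applying the $F$-linear projection $q_L$ (which sends $\mathfrak{m}$ into $\mathfrak{a}$ and $\mathfrak{m}^\perp$ into $\mathfrak{g}_\alpha\oplus\mathfrak{g}_{-\alpha}$) and invoking Lemma \ref{lem.handy}(2) gives $\mathfrak{k}^L_z=(\mathfrak{k}^L_z\cap\mathfrak{a})\oplus(\mathfrak{k}^L_z\cap(\mathfrak{g}_\alpha\oplus\mathfrak{g}_{-\alpha}))$, i.e.\ $\mathfrak{k}^L_z$ is compatible with $A$.

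Everything now reduces to a statement in semisimple rank one: \emph{if $\mathfrak{k}^L_z$ is compatible with $A$, then $z\in \mathcal{BT}_A$}. This is where each of the three hypotheses is used. If $z\notin \mathcal{BT}_A$, write $\mathfrak{k}^L_z=\Ad(u)\mathfrak{k}^L_{z_0}$ for some $z_0$ in the apartment and $u\in U_{B'\cap L}(F)$ (or a suitable root-group element), and examine directly the intersections of this conjugate lattice with $\mathfrak{a}$ and with $\mathfrak{g}_{\pm\alpha}$. A short calculation (of the kind sketched informally above, computing $\Ad(u)$ on $H$ and $E_{\pm\alpha}$) shows that the three summands sum to a proper sublattice except in a degenerate situation where a factor of $2$ becomes problematic. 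This is precisely the $SL_2$-in-residue-characteristic-$2$ pathology: hypothesis (1) removes it outright; hypothesis (2) rules out $L_{\mathrm{der}}=SL_2$ in residue characteristic $2$ inside $G$ (since $\mu_2$ would then be a non-torus piece of $Z(G)$); hypothesis (3), by Bruhat-Tits, makes $G_x$ reductive over $\mathfrak{o}$, so the rank-one computation is forced to happen at a hyperspecial vertex where compatibility with $A$ forces $z$ to lie on the apartment. I expect this final rank-one verification, and in particular disentangling the residue-characteristic-$2$ pathology under each of (1), (2), (3), to be the main obstacle.
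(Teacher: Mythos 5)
Your overall architecture is sound and in one respect cleaner than the paper's: the paper first proves the theorem in the special case $M=A$ (using a rank-one reduction plus parts (4)–(5) of Lemma \ref{lem.handy}), and only afterwards reduces the general case to $M=A$ via part (6) of that lemma. You instead reduce directly from general $M$ to the rank-one Levi $L=M_\alpha$, bypassing the $M=A$ intermediary. Your step showing that $M$-compatibility of $\mathfrak k_x$ forces $A$-compatibility of $\mathfrak k^L_z$ (intersecting the $A_M$-weight decompositions of $\mathfrak k_x$ and $\mathfrak q$ and then pushing through $q_L$ via Lemma \ref{lem.handy}(2)) is correct and is in effect a mild variant of the paper's parts (5) and (6) packaged into a single step. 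Likewise your reduction to checking $r_{B'}(x)=r_{B''}(x)$ via Corollary \ref{cor.recog} and Lemma \ref{lem.gmo}(5), and your translation of that equality into $z\in\mathcal{BT}_A$ via the proposition preceding Corollary \ref{cor.recog}, all match what the paper does.

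The genuine gap is the semisimple-rank-one statement itself, which you explicitly defer as ``the main obstacle.'' That rank-one case is the load-bearing part of the proof: it is exactly where each of the three hypotheses is actually used, and the paper carries it out with an explicit $2\times 2$ computation producing the two concrete conditions $t\,\alpha(\mathfrak m(\mathfrak o))\subset\varpi^a\mathfrak o$ and $t\varpi^b H_\alpha\in\mathfrak m(\mathfrak o)$, from which the three cases fall out (for $GL_2$, $PGL_2$, or residue characteristic $\ne 2$ one has $\alpha(\mathfrak m(\mathfrak o))=\mathfrak o$; for $SL_2$ with $x$ special one has $b=-a$ and $\mathfrak m(\mathfrak o)=\mathfrak o H_\alpha$). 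Your sketch of a calculation with $\Ad(u)$ on $H$, $E_{\pm\alpha}$ is pointing in the right direction but is not carried through. Two further points need tightening: for hypothesis (2), the relevant fact is that a torus center for $G$ implies a torus center for every Levi $L$ (because $\mathbb Z R/\mathbb Z R_L$ is free, so $X^*(A)/\mathbb Z R_L$ is torsion-free whenever $X^*(A)/\mathbb Z R$ is); your phrasing about ``$\mu_2$ being a non-torus piece of $Z(G)$'' conflates $Z(L_{\mathrm{der}})$ with $Z(G)$. For hypothesis (3) you need the precise statement that $z=r_Q(x)$ is special in $\mathcal{BT}_L$ whenever $x$ is special in $\mathcal{BT}_G$, which the paper invokes but you only gesture at.
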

\begin{proof}
In the previous lemma it was shown that $\mathfrak k_x$ is compatible with $M$
when $x \in \mathcal {BT}_M$. Our real task is to prove the converse. In the first
part of this proof we will treat the special case in which $M=A$, and then we will
use this special case to handle the general one. 

Thus, for the moment we assume that $M=A$. First we examine the case in which $G$
has semisimple rank $1$, so that $G$ is isomorphic to the direct product of a
split torus and one of the three groups $SL_2$, $GL_2$, and $PGL_2$. It is
harmless to discard the torus factor, and thus we may as well assume that $G$ is
one of the three groups just mentioned, and that $M$ is the torus of diagonal
matrices in $G$. We will make use of the upper triangular Borel subgroup $P=MU$
and the corresponding positive root $\alpha$. 

We write $x$ as $x=u\lambda$ for $\lambda \in \mathcal {BT}_M$ and 
\begin{equation*} 
u=
\begin{bmatrix}
1 & t \\0 &1
\end{bmatrix} 
\in U(F).
\end{equation*}
We are going to work out concretely what it means for $\mathfrak k_x$ to be
compatible with $M$. 

The root vectors 
\begin{equation*} X_+:=
\begin{bmatrix}
0 & 1 \\0 &0
\end{bmatrix}, \ 
X_-:=\begin{bmatrix}
0 & 0 \\1 &0
\end{bmatrix}
\end{equation*} 
form a basis for $\mathfrak m^{\perp}$. There are unique integers $a,b$ such that 
\[
\mathfrak k_\lambda=\mathfrak m(\mathfrak o) \oplus \varpi^a\mathfrak o X_+
\oplus \varpi^b\mathfrak o X_-,
\] 
and moreover the sum  $a+b$ is $0$ (resp., $1$) if $x$ is special (resp., not
special). 

From the third part of Lemma \ref{lem.handy} it follows that $\mathfrak k_x$ is
compatible with $M$ if and only if 
\begin{equation}\label{eq.cM1}
\mathfrak m(\mathfrak o) \subset \mathfrak k_x
\end{equation}
and 
\begin{equation}\label{eq.cM2}
q_M(\mathfrak k_x) \subset \mathfrak m(\mathfrak o).
\end{equation}     

Since $\mathfrak m(\mathfrak o) \subset \mathfrak k_\lambda$, \eqref{eq.cM1} is
equivalent to $(\Ad(u^{-1})-1)\mathfrak m(\mathfrak o) \subset \mathfrak
k_\lambda$, which boils down to $\ad(-tX_+)\mathfrak m(\mathfrak o) \subset
\mathfrak k_\lambda$, or, equivalently, 
\begin{equation}\label{eq.cM1'}
t\alpha\bigl(\mathfrak m(\mathfrak o)\bigr) \subset \varpi^a\mathfrak o.
\end{equation}

Now  
\[
\mathfrak k_x=u\mathfrak k_\lambda u^{-1}=u\bigl(\varpi^b\mathfrak o X_- \oplus
(\mathfrak k_\lambda \cap \mathfrak p)  
\bigr)u^{-1} = \varpi^b\mathfrak o \Ad(u)(X_-) \oplus (\mathfrak k_x \cap
\mathfrak p)
\]
and since $q_M(k_x\cap \mathfrak p)=\mathfrak k^M_\lambda =\mathfrak m(\mathfrak
o)$ by the second part of Lemma \ref{lem.handy}, the condition
\eqref{eq.cM2} is equivalent to 
\[
q_M\bigl(\varpi^b\Ad(u)(X_-)\bigr) \in \mathfrak m(\mathfrak o),
\] 
and this boils down to the condition 
\begin{equation}\label{eq.cM2'}
t\varpi^bH_\alpha \in  \mathfrak m(\mathfrak o),
\end{equation}   
where $H_\alpha$ is the coroot for $\alpha$, viewed as an element of the Lie
algebra of $A=M$. 

Now we can complete the proof in the special case under consideration. Suppose that
$\mathfrak k_x$ is compatible with $M$. If $G$ is $GL_2$ or $PGL_2$, or if the
residual characteristic is not $2$, then $\alpha(\mathfrak m(\mathfrak
o))=\mathfrak o$, and \eqref{eq.cM1'} says that $t \in \varpi^a\mathfrak o$.
Therefore $u$ fixes $\lambda$, so that $x=u\lambda =\lambda \in \mathcal {BT}_M$.

When $G$ is $SL_2$, then \eqref{eq.cM1'} says only that $2t \in \varpi^a\mathfrak
o$, and when the residual characteristic is $2$, this is not enough to conclude
that $u$ fixes $\lambda$. However, if $x$ is special, then $b=-a$, so that
\eqref{eq.cM2'} becomes the statement that $t \in \varpi^a\mathfrak o$ (since for 
$SL_2$ we have $\mathfrak m(\mathfrak o)=\mathfrak o H_\alpha$), and again we
conclude that $x=u\lambda=\lambda \in \mathcal {BT}_M$. 

The next step is to prove the theorem for general $G$, but with $M$ still equal to
$A$. So we assume that $x \in \mathcal {BT}_G$ is such that $\mathfrak k_x$ is
compatible with $A$, and we must prove that $x \in \mathcal {BT}_A$. By Corollary
\ref{cor.recog} it is enough to check that $r_B(x)=r_{B'}(x)$ whenever $B,B'$ are
adjacent Borel subgroups containing $A$. Let $\alpha$ be the unique root of $A$
that is positive for $B$ and negative for $B'$,  let $M_\alpha$ be the Levi
subgroup of semisimple rank $1$  containing $A$ whose roots are
$\{\alpha,-\alpha\}$, and let $P=M_\alpha N$ be the unique parabolic subgroup with
Levi component $M_\alpha$ that contains both $B$ and $B'$. Put $y=r_P(x)$.

From the fifth part of Lemma \ref{lem.handy} it follows that $\mathfrak
k_y^{M_\alpha}$ is compatible with $A$. If the center of $G$ is a torus, the same
is true for any Levi subgroup of $G$, and in particular this is so for $M_\alpha$.
Also, if $x$ is special, so too is $y$. Therefore, from the semisimple rank $1$
case that has already been treated, we conclude that $y \in \mathcal {BT}_A$ and
hence that 
\[
r_B(x)=r_{B \cap M_\alpha}(y)=y=r_{B'\cap M_\alpha}(y)=r_{B'}(x),
\]
as desired. 

Now consider the general case. We are given $x \in\mathcal {BT}_G$ such that
$\mathfrak k_x$ is compatible with $M$, and we must show that $x \in \mathcal
{BT}_M$. Choose a parabolic subgroup $P=MN$ with Levi component $M$, and put
$y=r_P(x) \in \mathcal {BT}_M$.  Since it is harmless to multiply $x$ on the left
by any element of $M(F)$, we may as well assume that $y \in \mathcal {BT}_A$. To
prove the theorem it suffices to show that $x \in \mathcal {BT}_A$. 

Thus, by what we have already proved, it is enough to show that $\mathfrak k_x$ is
compatible with $A$, and this follows from the sixth part of Lemma \ref{lem.handy}
because $\mathfrak k_x$ is compatible with $M$ and $\mathfrak k^M_y$ is compatible
with $A$ (since $y \in \mathcal {BT}_A$). 
\end{proof}

\section{Review of root valuation functions and strata}\label{sec7}
\subsection{Notation}\label{sub.6.1}
For the rest of this paper we work with 
 $F=\mathbb C((\epsilon))$, $\mathcal O=\mathbb C[[\epsilon]]$. In addition we
consider a split connected reductive group $G$ over $\mathbb C$ and a split maximal
torus
$A$ in
$G$. Associated to $A$ we have its set $R$ of roots, its Weyl group $W$ and 
its Lie algebra  $\mathfrak a$. We remind the reader that it was only in section
\ref{sec4} that we used $\mathfrak a$ to denote $X_*(A)\otimes \mathbb R$. 
Moreover we have the root space decomposition 
\begin{equation}
\mathfrak g=\mathfrak a \oplus \bigl(\bigoplus_{\alpha \in R} \mathfrak
g_\alpha\bigr),
\end{equation}
where  $\mathfrak
g_\alpha$ is the root space corresponding to $\alpha \in R$. 
Finally, for $\alpha \in R$ and $n\in
\mathbb Z$ we will often use
$P_\alpha^n$ as a convenient abbreviation for $\epsilon^n\mathfrak
g_\alpha(\mathcal O)$.  

\subsection{Root valuation functions} Let $u \in \mathfrak a(F)$ and
assume that $u$ is regular  in $\mathfrak g(F)$. Then, as in \cite{gkm}, we obtain
from $u$ the function $r_u:R \to \mathbb Z$ defined by $r_u(\alpha):=\val
\alpha(u)$. 

Given any function $r:R \to \mathbb Z$, we denote by $\mathfrak
a(F)_r$ the subset of $\mathfrak a(F)$ consisting of all regular elements $u$ for
which $r_u=r$. In Proposition 3.4.1 of \cite{gkm} it was shown that $\mathfrak
a(F)_r$ is nonempty if and only if $r$ satisfies the following condition on the
subsets  $R_m:=\{\alpha \in R: r(\alpha) \ge m\}$ of $R$: for
every  $m \in \mathbb Z$ the subset $R_m$ is $\mathbb Q$-closed (equivalently, is
the root system of a Levi subgroup of $G$ containing $A$). In this paper we will
refer to functions $r$ satisfying this condition as \emph{root valuation
functions}.  

Now let $u$ be a regular semisimple element of $\mathfrak g(F)$. We  say that
$u$ is \emph{split} if its centralizer in $G$ is a split maximal $F$-torus in
$G$. Any split regular semisimple $u \in \mathfrak g(F)$ is $G(F)$-conjugate to an
element $u' \in \mathfrak a(F)$, well-defined up to the action of $W$ on
$\mathfrak a(F)$. The root valuation function $r_{u'}$ is then well-defined up to
the action of $W$. Turning this around, as in \cite{gkm}, we consider a root
valuation function $r:R \to \mathbb Z$ and then denote by $\mathfrak g(F)_r$ the
subset of
$\mathfrak g(F)$ consisting of split regular semisimple elements $u$ for which the
root valuation function  $r_{u'}$ lies in the $W$-orbit of  $r$. The subset
$\mathfrak g(F)_r$ is referred to as the
\emph{root valuation stratum} in $\mathfrak g(F)$ associated to $r$.

\section{Generalized affine Springer fibers adapted to a given root valuation
function} \label{sec8}

As before, for any $x$ in the building of the $F$-group  $G_F$, we
can consider the corresponding parahoric subgroup $K_x$ and its Lie algebra
$\mathfrak k_x$. 

\subsection{Goal} 
Our goal in this section is to investigate a new kind of affine Springer theory
that is adapted to a given root valuation stratum in $\mathfrak g(F)$. Let us then
fix a root valuation function
$r:R \to
\mathbb Z$ and a point $x$ in the apartment of $A$. Using $r,x$ we will construct
an ind-scheme
$Y_{r,x}$ and a morphism
$Y_{r,x} \to \mathfrak g(F)$, generalizing affine Springer theory for the
partial affine flag manifold $G(F)/K_x$. Over the particular root valuation
stratum $\mathfrak g(F)_r$ the fibers of this morphism are $0$-dimensional and
have a very simple description, as we will see in Theorem \ref{thm.fibers}.

\subsection{Definition of $\tilde r$ } We will be 
making use of the same concepts that arose in our discussion (see section
\ref{sec3}) of the linear Hodge-Newton decomposition for linear transformations.
The linear transformations of interest are the ones of the form $\ad(u)$ for $u
\in \mathfrak g(F)$. Let $d$ denote the dimension of $\mathfrak g$. Then the
Newton point for any linear transformation
$\mathfrak g(F) \to \mathfrak g(F)$ lies in $\tilde{\mathbb Q} _+^d$,
while the Hodge point of such a linear transformation with respect to some
lattice in $\mathfrak g(F)$ lies in the intersection of $(\mathbb Z \cup
\{\infty\})^d$ with $\tilde{\mathbb Q} _+^d$. 

We now use $r$ to produce an element $\tilde r \in (\mathbb Z \cup \{\infty\})^d
\cap \tilde{\mathbb Q} _+^d$. One way to give the definition is to say that
$\tilde r$ is the Newton point of
$\ad(u)$ for any $u$ in the root valuation stratum $\mathfrak g(F)_r$. More
concretely, $\tilde r$ is obtained by forming the $d$-tuple whose first $|R|$
entries are the integers $r(\alpha)$, listed in non-decreasing order, and whose
last $d-|R|=\dim A$ entries are all $\infty$. 

\subsection{Definition of $Y_{r,x} \to \mathfrak g(F)$} As we just said, in the
situation we are in, we may consider the Hodge point $\mu(\ad(u),\Lambda) \in
 \tilde{\mathbb Q} _+^d$ for any lattice $\Lambda$ in $\mathfrak g(F)$. The
lattices we care about are  the ones obtained as $\mathfrak
k_{gx}=g\mathfrak k_x g^{-1}$ for some $g \in G(F)/K_x$, and we define $Y_{r,x}$
to be the set of pairs $(u,g) \in \mathfrak g(F) \times (G(F)/K_x)$ satisfying the
condition that 
\[
\mu(\ad(u),\mathfrak k_{gx}) \le \tilde r.
\]
We also consider the map $\pi_{r,x}:Y_{r,x} \to \mathfrak g(F)$ defined by $(u,g)
\mapsto u$. 

The fibers of $\pi_{r,x}$ are generalizations of affine Springer fibers; indeed,
usual affine Springer theory is recovered by taking $r$ to be identically $0$.  It
follows from Theorem
\ref{VTlambda}(1) that if
$(u,g)
\in Y_{r,x}$, then 
\begin{equation}\label{eq.chainnu} 
\nu(\ad(u)) \le \mu(\ad(u),\mathfrak k_{gx}) \le \tilde r.
\end{equation} 
In particular, the fiber of $\pi_{r,x}$ over $u \in \mathfrak
g(F)$ is empty unless $\nu(\ad(u)) \le \tilde r$. 

\subsection{Fibers of $\pi_{r,x}$ over certain strata $\mathfrak a(F)_{r'}$} 
Let $r':R \to \mathbb Z$ be another root valuation function. We say that $r'$ is
\emph{weakly equivalent} to $r$ if $\tilde r'=\tilde r$. Of course if $r'$ is of
the form $wr$ for some $w \in W$, then $r'$ is weakly equivalent to $r$.

\begin{theorem}\label{thm.fibers}  
Suppose that $r'$ is a root valuation function that is weakly equivalent to $r$,
and let
$u \in
\mathfrak a(F)_{r'}$. Then the fiber of
$\pi_{x,r}$ over
$u$ consists of the set of 
$g
\in G(F)/K_x$ such that  $gx$ lies in the apartment
of $A$. For example, when $x$ lies in the interior of an alcove, so that $K_x$ is
an Iwahori subgroup, this fiber can be identified with the extended affine Weyl
group
$\tilde W$. 
\end{theorem}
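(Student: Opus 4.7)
The plan is to combine the linear Hodge-Newton decomposition (Theorem \ref{VTlambda}(2)) with the building-recognition result (Theorem \ref{thm.comim}). The crucial first observation is that for any $u \in \mathfrak{a}(F)_{r'}$ the endomorphism $\ad(u)$ of $\mathfrak{g}(F)$ acts on each $\mathfrak{g}_\alpha(F)$ as multiplication by $\alpha(u)$ (valuation $r'(\alpha)$) and acts as zero on $\mathfrak{a}(F)$; its Newton point is therefore exactly $\tilde{r}' = \tilde{r}$.

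Given $(u,g) \in Y_{r,x}$, I would invoke the chain \eqref{eq.chainnu} to get $\tilde{r} = \nu(\ad(u)) \le \mu(\ad(u), \mathfrak{k}_{gx}) \le \tilde{r}$, which forces $\mu(\ad(u),\mathfrak{k}_{gx}) = \nu(\ad(u)) = \tilde{r}$. I then apply Theorem \ref{VTlambda}(2) to the $\ad(u)$-stable splitting $\mathfrak{g}(F) = U \oplus W$ with $U = \bigoplus_\alpha \mathfrak{g}_\alpha(F)$ and $W = \mathfrak{a}(F)$: every slope on $U$ is finite and strictly less than the slope $\infty$ on $W$, while the partial-sum hypothesis at index $|R| = \dim U$ is automatic from $\mu = \nu$. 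The conclusion is $\mathfrak{k}_{gx} = (\mathfrak{k}_{gx} \cap U) \oplus (\mathfrak{k}_{gx} \cap \mathfrak{a}(F))$, which is precisely the compatibility of $\mathfrak{k}_{gx}$ with the Levi subgroup $A$ in the sense of section \ref{sec4}. Since the residue field is $\mathbb{C}$, hypothesis (1) of Theorem \ref{thm.comim} is satisfied and I conclude that $gx \in \mathcal{BT}_A$, i.e., $gx$ lies in the apartment of $A$.

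For the reverse implication, if $gx$ lies in the apartment of $A$ then the discussion at the end of subsection \ref{sub.para} gives a decomposition $\mathfrak{k}_{gx} = \Lambda_0 \oplus \bigoplus_\alpha \epsilon^{m_\alpha}\mathfrak{g}_\alpha(\mathcal{O})$ for some integers $m_\alpha$ and some $\mathcal{O}$-lattice $\Lambda_0 \subset \mathfrak{a}(F)$. In the basis adapted to this decomposition $\ad(u)$ is diagonal with entries of valuation $r'(\alpha)$ or $\infty$, so sorting yields $\mu(\ad(u), \mathfrak{k}_{gx}) = \tilde{r}' = \tilde{r}$, and hence $(u,g) \in Y_{r,x}$.

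Finally, when $x$ lies in the interior of an alcove $C_0$ so that $K_x = I$ is an Iwahori subgroup, the fiber is naturally identified with $\tilde W = N_G(A)(F)/A(\mathcal{O})$: any $g$ with $gx$ in the apartment sends the alcove $C_0$ to an alcove $gC_0$ of the apartment (since $gC_0$ is determined by its interior point $gx$), and because $\tilde W$ acts simply transitively on alcoves of the apartment there is a unique $w \in \tilde W$ with $\dot{w} C_0 = g C_0$, whence $\dot{w}^{-1}g \in I$; injectivity follows from $I \cap N_G(A)(F) = A(\mathcal{O})$. The main technical point I anticipate is confirming that Theorem \ref{VTlambda}(2) applies cleanly in the non-invertible regime — in particular verifying that the equality of Hodge and Newton points survives the passage to the $\mathfrak{a}(F)$-summand of slope $\infty$ and that the resulting lattice splitting matches the compatibility notion required by Theorem \ref{thm.comim}.
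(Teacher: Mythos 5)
Your proposal follows the paper's own argument essentially verbatim: equality in the chain $\nu(\ad u)\le\mu(\ad u,\mathfrak k_{gx})\le\tilde r$, then Theorem~\ref{VTlambda}(2) applied to the splitting $\mathfrak g(F)=\mathfrak a^\perp(F)\oplus\mathfrak a(F)$, then Theorem~\ref{thm.comim} to conclude $gx\in\mathcal{BT}_A$. The extra details you supply (the direct computation of the Hodge point in the reverse direction, the check of hypothesis (1) of Theorem~\ref{thm.comim}, and the identification of the fiber with $\tilde W$ via simple transitivity on alcoves) are all correct and just spell out what the paper leaves implicit.
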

\begin{proof}
It is obvious that when $gx$ lies in the apartment of $A$, the Hodge point
$\mu(\ad(u),\mathfrak k_{gx})$ is $\tilde r$, so such points $g$ do lie in the
fiber over $u$. 

Conversely, suppose that $g$ is a point in the fiber over $u$. Since $\nu(\ad
u)=\tilde r$,   we conclude from 
\eqref{eq.chainnu} that $\nu(\ad(u))=\mu(\ad(u),\mathfrak k_{gx})$. 
It then follows from Theorem \ref{VTlambda}(2), applied to the subspaces
$W=\mathfrak a(F)$ and $U=\mathfrak a^{\perp}(F)$, that $\mathfrak k_{gx}$ is
compatible with
$A$. From Theorem \ref{thm.comim} we  see that $gx$ lies in the apartment of
$A$, as desired. 
\end{proof}

\section{Admissible proalgebraic subgroups of $G(F)$}\label{sec9}
\subsection{Basic definitions} 
As before, for any point $x$ in the building of $G(F)$ we denote by $G_x$ the
group  scheme over $\mathcal O$ that Bruhat-Tits associate to $x$. As usual  
$K_x:=G_x(\mathcal O)$ is a parahoric subgroup of $G(F)$, and the Lie algebra of
$G_x$ is a parahoric subalgebra $\mathfrak k_x$ of $\mathfrak g(F)$. For any
nonnegative integer $n$ we denote by $K_{x,n}$ the kernel of $G_x(\mathcal O)
\twoheadrightarrow G_x(\mathcal O/P^n)$. Then the equality 
\[
K_x=\varprojlim_{n}K_x/K_{x,n}
\]
exhibits $K_x$ as a proalgebraic group over $\mathbb C$. 
Let $y$ be another point in the building. Then $K_x \cap K_y$ is a proalgebraic
subgroup of both $K_x$ and $K_y$.  

Let $K$ be a subgroup of $G(F)$. We say that $K$ is an \emph{admissible
proalgebraic subgroup} of $G(F)$ if the following two conditions hold: 
\begin{enumerate}
\item there exists $x$ in the building and $m \ge 0$ such that 
\begin{equation}\label{eq.AdPro}
K_x \supset K \supset K_{x,m},
\end{equation}
\item 
for one (equivalently, every) $x,m$ satisfying \eqref{eq.AdPro}, $K/K_{x,m}$ is a
closed algebraic subgroup of $K_x/K_{x,m}$. 
\end{enumerate}

The proalgebraic structures on $K$ inherited from the various   $K_x$ containing
$K$  all agree with each other, so that $K$ becomes a proalgebraic group in a
canonical way.

 \subsection{Maximal tori in admissible proalgebraic subgroups}

An admissible pro\-algebraic subgroup $K$ of $G(F)$ is a rather special kind of
proalgebraic group, in that its quotient by its prounipotent radical is a reductive
algebraic group. Therefore $K$ has maximal $\mathbb C$-tori, and any two such are
conjugate under $K$; here one needs to bear in mind that if 
\[
1 \to U \to H \to H/U \to 1
\]
is a short exact sequence of linear algebraic groups, with $U$ unipotent, then any
two maximal tori in $H$ having the same image in $H/U$ are conjugate under $U$
(not just under $H$). To prove this last fact one can apply the fourth part of
Theorem (10.6) in \cite{LAG} to the connected solvable group obtained as the
preimage in
$H$ of the common image in $H/U$ of the two maximal tori in question.

\subsection{Review of some results of Bruhat-Tits} 
We begin by reviewing  Bruhat-Tits' notion of concave function on the root system 
$R$. 

\begin{definition}
A function $f:R \to \mathbb R$ is said to be \emph{concave} if 
\begin{enumerate}
\item  $f(\alpha)+f(\beta) \ge f(\alpha +\beta)$ whenever $\alpha,\beta \in R$ are
such that $\alpha +\beta \in R$, and 
\item $f(\alpha)+f(-\alpha) \ge 0$ for all $\alpha \in R$.  
\end{enumerate} 
\end{definition}

The next proposition is a less general version of Bruhat-Tits'
\cite{bruhat-tits72}  Proposition (6.4.6).

\begin{proposition}
Let $f$ be a concave function on the root system $R$. Then there exists $x$
in the apartment $X_*(A)\otimes \mathbb R$ of $A$ such that 
\[
\alpha(x) \le f(\alpha)
\] 
 for all $\alpha \in R$.
\end{proposition}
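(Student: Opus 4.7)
The plan is to apply linear programming duality (Farkas' lemma) to convert the existence statement into an inequality about root relations, and then verify that inequality using the two concavity conditions in turn.

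By Farkas' lemma applied in the real vector space $X_*(A)\otimes\mathbb R$, the polyhedron
\[
P=\{x\in X_*(A)\otimes\mathbb R:\alpha(x)\le f(\alpha)\text{ for all }\alpha\in R\}
\]
is non-empty if and only if there is no family of non-negative reals $(c_\alpha)_{\alpha\in R}$ satisfying $\sum_{\alpha\in R}c_\alpha\alpha=0$ in $X^*(A)\otimes\mathbb R$ and $\sum_{\alpha\in R}c_\alpha f(\alpha)<0$. So the task reduces to the purely combinatorial assertion: for every non-negative relation $\sum c_\alpha\alpha=0$ one has $\sum c_\alpha f(\alpha)\ge 0$.

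Fix a system of simple roots $\Delta\subset R$. The key manoeuvre is a reduction step using concavity condition (1). Any positive root $\beta$ of height $\ge 2$ can be written as $\beta=\beta'+\alpha$ with $\alpha\in\Delta$ and $\beta'\in R^+$ of height $\mathrm{ht}(\beta)-1$ (this is the standard height-reduction lemma in root system theory). If $c_\beta>0$, replace $c_\beta\beta$ in the relation by $c_\beta\beta'+c_\beta\alpha$; this yields another non-negative relation, and by concavity the associated $f$-sum changes by the non-negative quantity $c_\beta\bigl(f(\beta')+f(\alpha)-f(\beta)\bigr)$. The symmetric move handles negative roots. Performing this reduction inductively (for instance on the lexicographic pair consisting of the maximum height appearing with positive coefficient and the number of positive-coefficient roots at that height, carried out independently for $R^+$ and $R^-$) reduces to the case where the only roots in the relation lie in $\Delta\cup(-\Delta)$. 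During this reduction the $f$-sum never decreases, so it suffices to verify non-negativity in the reduced case.

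In the reduced case the relation reads $\sum_{\alpha\in\Delta}(c_\alpha-c_{-\alpha})\alpha=0$, and since $\Delta$ is linearly independent in $X^*(A)\otimes\mathbb R$, this forces $c_\alpha=c_{-\alpha}$ for every $\alpha\in\Delta$. Concavity condition (2) then gives
\[
\sum_{\alpha\in R}c_\alpha f(\alpha)=\sum_{\alpha\in\Delta}c_\alpha\bigl(f(\alpha)+f(-\alpha)\bigr)\ge 0,
\]
completing the proof. The main conceptual step is the Farkas reformulation, which reveals why exactly the two concavity axioms are the right hypotheses: condition (1) is exactly what one needs to reduce general non-negative root relations to the pair-cancellation relations $\alpha+(-\alpha)=0$ in simple-root coordinates, and condition (2) handles those pair-cancellations. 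The only place where care is required is verifying that the height-reduction induction terminates, which is routine once one tracks the correct well-founded statistic.
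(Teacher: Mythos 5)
The paper does not prove this proposition itself; it simply cites Bruhat--Tits, Proposition (6.4.6) of \cite{bruhat-tits72}, so your argument is being compared against the claim rather than against an in-paper proof. Your Farkas reformulation is fine: the polyhedron is non-empty iff $\sum_\alpha c_\alpha f(\alpha)\ge 0$ for every family $(c_\alpha)_{\alpha\in R}$ with $c_\alpha\ge 0$ and $\sum_\alpha c_\alpha\alpha=0$. The gap is in the reduction. You replace $c_\beta\beta$ by $c_\beta\beta'+c_\beta\alpha$ and correctly compute that the $f$-sum changes by $c_\beta\bigl(f(\beta')+f(\alpha)-f(\beta)\bigr)\ge 0$; that is, each splitting step \emph{increases} the $f$-sum. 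You then conclude ``so it suffices to verify non-negativity in the reduced case,'' but that inference is backwards: from (original) $\le$ (reduced) and (reduced) $\ge 0$ you cannot deduce (original) $\ge 0$. For a reduction argument to prove the original inequality, each step must \emph{decrease} the $f$-sum, so that original $\ge$ reduced $\ge 0$; your reduction runs in exactly the wrong direction.

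The fix is to merge rather than split. By scaling and rational approximation you may take the $c_\alpha$ to be non-negative integers, so the relation is a finite multiset of roots summing to $0$. If it is non-empty, pick $\alpha$ in it; since $\sum_\beta c_\beta\langle\beta,\alpha^\vee\rangle=0$ and $\alpha$ contributes $2c_\alpha>0$, some $\beta$ in the multiset has $\langle\beta,\alpha^\vee\rangle<0$, hence $\alpha+\beta$ is $0$ or a root. Remove one copy each of $\alpha$ and $\beta$ and, if $\alpha+\beta\in R$, add one copy of $\alpha+\beta$. The relation is preserved, the multiset strictly shrinks, and the $f$-sum drops by $f(\alpha)+f(-\alpha)\ge 0$ (concavity (2)) or by $f(\alpha)+f(\beta)-f(\alpha+\beta)\ge 0$ (concavity (1)). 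Iterating to the empty multiset shows $\sum c_\alpha f(\alpha)\ge 0$, and then your Farkas framing does give the proposition. Note that this corrected reduction makes no reference to a choice of simple system, and uses both concavity conditions on equal footing rather than assigning them to separate stages.
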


Before stating the next result, we  remind the reader that for $m \in
\mathbb Z$ the lattice 
$P^m_\alpha$  in $\mathfrak
g_\alpha(F)$ was defined in subsection \ref{sub.6.1}.

\begin{corollary}  \label{cor.bt}
Let $\mathfrak k$ be a
lattice in
$\mathfrak g(F)$ of the form 
\[
\mathfrak k=\mathfrak a(\mathcal O) \oplus \bigl(\bigoplus_{\alpha \in
R}P_\alpha^{k(\alpha)}\bigr)
\]
for some function $k:R \to \mathbb Z$. Then $\mathfrak k$ is a subalgebra of
the Lie algebra $\mathfrak g(F)$ if and only if $k$ is a concave function, and in
this case there exists a point $x$ in the apartment of $A$ such that $\mathfrak k$
is contained in the parahoric subalgebra of $\mathfrak g(F)$ determined by $x$. 
\end{corollary}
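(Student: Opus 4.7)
The plan is to treat the two assertions separately, reducing the existence of $x$ to the preceding proposition.

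For the equivalence ``subalgebra iff concave,'' I would test $[\mathfrak k,\mathfrak k] \subset \mathfrak k$ one summand at a time in the decomposition $\mathfrak k = \mathfrak a(\mathcal O) \oplus \bigoplus_\alpha P_\alpha^{k(\alpha)}$, using the standard facts that $[\mathfrak g_\alpha,\mathfrak g_\beta] \subset \mathfrak g_{\alpha+\beta}$ when $\alpha+\beta \in R$, that $[\mathfrak g_\alpha,\mathfrak g_{-\alpha}] \subset \mathfrak a$, and that these brackets vanish otherwise. Brackets involving the $\mathfrak a(\mathcal O)$-summand preserve each root summand individually and impose no constraint, so the subalgebra condition reduces to $k(\alpha)+k(\beta) \geq k(\alpha+\beta)$ whenever $\alpha,\beta,\alpha+\beta \in R$, together with $k(\alpha)+k(-\alpha) \geq 0$ for all $\alpha$ (the latter coming from the fact that $[P_\alpha^{k(\alpha)}, P_{-\alpha}^{k(-\alpha)}] \subset \epsilon^{k(\alpha)+k(-\alpha)}\mathfrak a(\mathcal O)$ must lie in $\mathfrak a(\mathcal O)$). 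These are precisely the two defining inequalities of concavity.

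For the containment $\mathfrak k \subset \mathfrak k_x$, assume $k$ is concave. The structural input is that for any $x$ in the apartment of $A$, the parahoric subalgebra $\mathfrak k_x$ is itself compatible with the root space decomposition, and its $\mathfrak g_\alpha(F)$-component is $P_\alpha^{m(\alpha,x)}$ for an integer-valued function $m(\alpha,x)$ which, under the sign convention of Section~\ref{sec4} (where $\mu(\varpi)$ acts on the apartment by $-\mu$), equals $\lceil -\alpha(x) \rceil$. Thus $\mathfrak k \subset \mathfrak k_x$ amounts to $k(\alpha) \geq -\alpha(x)$, i.e., $\alpha(x) \leq k(-\alpha)$, for every $\alpha \in R$. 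I would therefore apply the preceding proposition not to $k$ but to the companion function $f(\alpha) := k(-\alpha)$, which is again concave (the inversion $\alpha \mapsto -\alpha$ permutes the concavity inequalities); the point $x$ it produces satisfies exactly the system of inequalities needed for $\mathfrak k \subset \mathfrak k_x$.

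The main obstacle is the bookkeeping around sign conventions: one must apply the proposition to the twisted concave function $k(-\alpha)$ rather than to $k$ itself so that the inequalities it outputs match the explicit formula for $\mathfrak k_x \cap \mathfrak g_\alpha(F)$ used in this paper. Everything else is direct computation.
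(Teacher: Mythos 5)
Your proof is correct and matches the paper's own: both verify closure under bracket componentwise using the structure constants of the root space decomposition (which yields exactly the two concavity inequalities), and then invoke the preceding proposition to produce the point $x$. The one place you are more careful than the paper is the sign bookkeeping: the paper writes $\mathfrak k_x\cap\mathfrak g_\alpha(F)=P_\alpha^{\lceil\alpha(x)\rceil}$ and applies the proposition to $k$ directly, whereas under the convention stated in Section~\ref{sec4} (that $\mu(\varpi)$ translates the apartment by $-\mu$) the correct formula is $P_\alpha^{\lceil-\alpha(x)\rceil}$, and your substitution $\alpha\mapsto-\alpha$ in the proposition compensates for this; the two versions are equivalent up to replacing $x$ by $-x$, so either produces a valid point.
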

\begin{proof}
We may as well assume that $G$ is semisimple. An easy calculation shows that
$\mathfrak k$ is closed under bracket if and only if $k$ is a concave function, 
and in this case the proposition above says that there exists $x$ in the
apartment of $A$ such that $\alpha(x)
\le k(\alpha)$ for all $\alpha \in R$, so that $\mathfrak k$ is contained in
the parahoric subalgebra  
\[\mathfrak k_x=\mathfrak a(\mathcal O) \oplus \bigl(\bigoplus_{\alpha \in
R}P_\alpha^{\lceil\alpha(x)\rceil}\bigr).
\]
\end{proof}

\begin{remark} We retain the notation in the corollary. 
When $k$ is concave, Bruhat-Tits (see  Thm.~4.5.4
and sections 4.6.4 and 4.6.8 of \cite{bruhat-tits84}) construct a smooth
affine group scheme $G_k$ over $\mathcal O$ with generic fiber $G_F$ and connected
special fiber, having the property that $\Lie G_k=\mathfrak k$; thus $G_k(\mathcal
O)$ is  a connected admissible proalgebraic subgroup of
$G(F)$ having Lie algebra  $\mathfrak k$. 
\end{remark}

\subsection{Recognizing maximal tori in the presence of a suitable torus action} 
We now interrupt our discussion in order to prove the following proposition, which
will be applied in the next subsection. In it we may as well work over any
algebraically closed field. 
\begin{proposition}\label{prop.rmtta}
Let $H$ be a linear algebraic group, equipped with the
action of a torus $A$, which we use to form the semidirect product $A \ltimes H$.
Let
$T$ be a torus in $H$ satisfying the following two conditions: 
\begin{enumerate}
\item $A$ centralizes $T$, so that $A\times T$ is a subgroup of  $A \ltimes H$,
and 
\item The identity component of the group $H^{A \times T}$ of fixed points of the
conjugation action of
$A\times T$ on $H$ is equal to $T$.
\end{enumerate} 
Then $T$ is a maximal torus in $H$. 
\end{proposition}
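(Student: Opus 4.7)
The plan is to establish that $A \times T$ is a maximal torus of the semidirect product $A \ltimes H$, and then use conjugacy of maximal tori there to transfer maximality down to $T \subseteq H$.

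Replacing $H$ by its identity component $H^0$ (which is $A$-stable and contains the connected torus $T$), I may assume $H$ and hence $A \ltimes H$ are connected. A straightforward computation in the semidirect product, using hypothesis (1) that $A$ fixes $T$ pointwise, shows that an element $(a',h) \in A \ltimes H$ commutes with every element of $A \times T$ if and only if $h$ centralizes $T$ and is $A$-fixed, i.e.\ $h \in H^{A \times T}$ (the condition on $a'$ is vacuous because $A$ and $T$ are abelian and $A$ centralizes $T$). Therefore
\[
C_{A \ltimes H}(A \times T) \;=\; A \times H^{A \times T},
\]
and by hypothesis (2) its identity component is $A \times T$. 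Any torus in $A \ltimes H$ containing $A \times T$ is connected abelian, hence lies in this connected centralizer, hence equals $A \times T$. So $A \times T$ is a maximal torus of $A \ltimes H$.

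Now let $S$ be any maximal torus of $H$, and extend $S$ to a maximal torus $M$ of the connected group $A \ltimes H$. By conjugacy of maximal tori, $M = g(A \times T)g^{-1}$ for some $g = (a,h) \in A \ltimes H$, so $g^{-1}Sg \subseteq A \times T$. Since $S$ lies in the normal subgroup $H \subseteq A \ltimes H$, also $g^{-1}Sg \subseteq H$, giving
\[
g^{-1}Sg \;\subseteq\; (A \times T) \cap H \;=\; T,
\]
equivalently $S \subseteq gTg^{-1}$. A direct calculation---simplified by the fact that $A$ acts trivially on $T$---identifies $gTg^{-1}$ with $hTh^{-1}$, a subtorus of $H$ of dimension $\dim T$. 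Hence $\dim S \leq \dim T$, and since the reverse inequality is automatic this forces $\dim T = \operatorname{rank} H$, so $T$ is a maximal torus of $H$.

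The main thing to verify with care is the semidirect-product bookkeeping in the two conjugation computations, which is routine but convention-dependent. Conceptually the argument is just that hypothesis (2) forces $A \times T$ to be self-centralizing at the identity-component level in $A \ltimes H$---the standard criterion for a torus to be maximal---after which conjugacy of maximal tori in the connected group $A \ltimes H$ transfers maximality to $T$ inside $H$.
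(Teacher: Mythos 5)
Your proof is correct and follows essentially the same route as the paper's: compute that the identity component of the centralizer of $A \times T$ in $A \ltimes H$ is $A \times T$, conclude that $A \times T$ is a maximal torus of the semidirect product, and then invoke conjugacy of maximal tori to transfer the bound on dimensions down into $H$. The only cosmetic differences are that you reduce to connected $H$ explicitly (the paper handles disconnected $H$ implicitly, which is fine since all tori live in $H^0$ and are conjugate under $H^0$) and that you take $S$ to be an arbitrary maximal torus of $H$ rather than a torus containing $T$; both variations lead to the same dimension count. One tiny wrinkle: depending on which side you factor $g=(a,h)$, the identity $gTg^{-1}=hTh^{-1}$ may come out as $gTg^{-1}=a\cdot(hTh^{-1})$, i.e.\ the $A$-translate of $hTh^{-1}$, but either way it is a torus inside $H$ of dimension $\dim T$, which is all the argument needs.
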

\begin{proof}
The identity component of the centralizer in $A\ltimes H$ of $A \times T$ is equal
to
$A
\times T$, and therefore $A \times T$ is a maximal torus in $A \ltimes H$. Let
$S$ be any torus in $H$ that contains $T$; we must show that $S =T$. Now some
conjugate in $A \ltimes H$ of the maximal torus $A \times T$ must contain $S$.
Since $A$ centralizes $T$, any such conjugate has the form $h^{-1}(A\times T)h$
for some $h
\in H$. Then $hSh^{-1} \subset T$. Therefore $S$ and $T$ have the same dimension,
and since $S$ contains $T$, this shows that $S=T$, as desired. 
\end{proof}

\subsection{An application} \label{sub.appl}
Now we return to our usual setup. We are going to use the proposition we just
proved in order to recognize  maximal tori in certain proalgebraic subgroups of
$G(F)$ that will arise later when we study root valuation lattices in $\mathfrak
g(F)$. 

We begin by recalling the usual action of $\mathbb C^\times$ on the field $F$ (by
field automorphisms). For this action an element $t \in \mathbb C^\times$ acts on
$\epsilon$ by multiplication by $t$, hence acts on the Laurent
power series $\sum_ia_i\epsilon^i$ by sending it to $\sum_i a_it^i\epsilon^i$. The
fixed field of this action is of course $\mathbb C$. This action then induces
actions of $\mathbb C^\times$ on $G(F)$ (having fixed points
$G(\mathbb C)$) and on $\mathfrak g(F)$ (having fixed points $\mathfrak g(\mathbb
C)$). 

We will be interested in $\mathbb C^\times$-stable sublattices in $\mathfrak
a(\mathcal O)$. Any such lattice is obtained in the following way. Let $V_0
\subset V_1 \subset V_2 \subset\dots$ be an increasing chain of linear subspaces
of $\mathfrak a(\mathbb C)$ such that $V_i=\mathfrak a(\mathbb C)$ for $i \gg 0$.
Then 
\[
\bigl\{\sum_{i=0}^\infty v_i\epsilon^i: v_i \in V_i \quad \forall i\ge0\bigr\}
\]
is a $\mathbb C^\times$-stable lattice in $\mathfrak a(\mathcal O)$. 

Now
$\mathfrak a(\mathcal O)$ is the Lie algebra of the proalgebraic group $A(\mathcal
O)$. Again consider a lattice in $\mathfrak a(\mathcal O)$. We say that it is
algebraic if it arises as the Lie algebra of a proalgebraic subgroup of
$A(\mathcal O)$ containing $\ker[A(\mathcal O) \twoheadrightarrow A(\mathcal
O/P^N)]$ for some sufficiently large $N$. The commutative proalgebraic group
$A(\mathcal O)$ is the cartesian product of the torus $A(\mathbb C)$ and the
prounipotent commutative algebraic group $\ker[A(\mathcal O) \twoheadrightarrow 
A(\mathbb C)]$. Thus any proalgebraic subgroup of $A(\mathcal O)$ of the type we
are considering is the cartesian product of $S(\mathbb C)$, $S$ being  some
subtorus of $A$, and a proalgebraic subgroup of $\ker[A(\mathcal O)
\twoheadrightarrow  A(\mathbb C)]$.
The corresponding algebraic lattice in $\mathfrak a(\mathcal O)$ is then the direct
sum of
$\mathfrak s(\mathbb C)$ and some lattice in $\epsilon \mathfrak a(\mathcal O)$. 

Now consider an $\mathcal O$-subalgebra $\Lambda$ in
$\mathfrak g(F)$ of the form 
\[
\Lambda=\Lambda_A \oplus \bigl(\bigoplus_{\alpha \in R}
P_\alpha^{\lambda(\alpha)}\bigr)
\]
for some family of integers $\lambda(\alpha)$ ($\alpha \in R$)  and a $\mathbb
C^\times$-stable algebraic lattice
$\Lambda_A$ in $\mathfrak a(\mathcal O)$. Thus there is a subtorus $S \subset A$
and an increasing chain $\mathfrak s(\mathbb C)=V_0 \subset V_1 \subset V_2
\subset \dots$ of linear subspaces of $\mathfrak a(\mathbb C)$ such $V_i=\mathfrak
a(\mathbb C)$ for
$i\gg 0$ and 
\[\Lambda_A=
\bigl\{\sum_{i=0}^\infty v_i\epsilon^i: v_i \in V_i \quad \forall i\ge0\bigr\}.
\]

Since $\Lambda$ is normalized by $A(\mathcal O)$, the lattice 
\[
\Lambda^\sharp=\mathfrak a(\mathcal O) \oplus \bigl(\bigoplus_{\alpha \in R}
P_\alpha^{\lambda(\alpha)}\bigr)
\] 
is also an $\mathcal O$-subalgebra of $\mathfrak g(F)$. It follows that $\lambda$
is a concave function on $R$, and hence that, by the result of Bruhat-Tits we just
reviewed, there exists a point $x$ in the apartment of $A$ such that
$\Lambda^\sharp$ is contained in the parahoric subalgebra $\mathfrak k_x$
associated to $x$. Thus we also have
$\Lambda
\subset \mathfrak k_x$. Since $\Lambda_A$ is an algebraic subalgebra of $\mathfrak
k_x$, and  each $P_\alpha^{\lambda(\alpha)}$ arises in an obvious way as the Lie
algebra of a proalgebraic subgroup of $K_x$, we conclude from Corollary (7.7) of
\cite{LAG} that 
$\Lambda$ is an algebraic subalgebra of $\mathfrak k_x$, in the sense that
there is a connected admissible proalgebraic subgroup
$\tilde\Lambda$ of the parahoric subgroup $K_x$ such that  Lie algebra of $\tilde
\Lambda$ is
$\Lambda$.

We are now going to use  Proposition \ref{prop.rmtta} to show that $S(\mathbb
C)$ is a maximal torus of $\tilde \Lambda$. Indeed, by that proposition we just
need   to find a torus acting on $\tilde \Lambda$ such that the induced action on
$\Lambda$ has fixed points
$\mathfrak s(\mathbb C)$. The right torus to use is $A(\mathbb C)\times \mathbb
C^\times$, with
$A(\mathbb C)$ acting by conjugation and $\mathbb C^\times$ acting as described
above. Since $\Lambda$ is  $\mathbb C^\times$-stable, so too is $\tilde \Lambda$. 
The fixed points of
$A(\mathbb C)$ on
$\Lambda$ are 
$\Lambda_A$, and the fixed points of $\mathbb C^\times$ on $\Lambda$ are  $\Lambda
\cap \mathfrak g(\mathbb C)$; therefore the fixed points of $A(\mathbb C)\times
\mathbb C^\times$ on $\Lambda$ are $\Lambda_A \cap \mathfrak g(\mathbb
C)=\mathfrak s(\mathbb C)$, as desired.

\section{Topological Jordan decomposition}\label{sec10}
\subsection{Review of the topological Jordan decomposition}\label{sub.tJd}   
The reader who is already familiar with the topological Jordan decomposition
should skip this subsection. 

Let $K$ be an admissible proalgebraic subgroup of $G(F)$. Choose $x,m$ such that
\eqref{eq.AdPro} holds. The Lie algebra $\mathfrak k$ of $K$ is then a Lie
$\mathbb C$-subalgebra of $\mathfrak g(F)$ such that $\mathfrak k_x \supset
\mathfrak k \supset \epsilon^m\mathfrak k_{x}$. Moreover we have 
\[
\mathfrak k=\varprojlim_{n \ge m} \Lie(K/K_{x,n})=\varprojlim_{n \ge m} \mathfrak
k/\epsilon^n\mathfrak k_x. 
\] 

Say that $u \in \mathfrak k$ is $\mathbb C$-semisimple (respectively,
topologically nilpotent) if its image in the Lie algebra of $K/K_{x,n}$ is
semisimple (respectively, nilpotent) for all $n \ge m$. For $u \in \mathfrak k$
the Jordan decompositions of the images of $u$ in $\Lie(K/K_{x,n})$ are compatible
with each other as $n$ varies, so that there exist unique $u_s,u_n \in\mathfrak k$
such that $u=u_s+u_n$, $[u_s,u_n]=0$, $u_s$ is $\mathbb C$-semisimple, and $u_n$
is topologically nilpotent. This is customarily called the topological Jordan
decomposition of
$u$ and is independent of the choice of $x,m$ for which 
\eqref{eq.AdPro} holds. 

Suppose that $K'$ is an admissible proalgebraic subgroup containing $K$. Then the
topological Jordan decompositions of $u \in \mathfrak k \subset \mathfrak k'$
coincide. In particular the topological Jordan decomposition of $u \in \mathfrak
k$ can be calculated inside the Lie algebra of any parahoric subgroup containing
$K$. 

An element $u \in \mathfrak g(F)$ is said to be \emph{integral} if it is contained
in some parahoric subalgebra $\mathfrak k_x$. Any
integral element $u \in \mathfrak g(F)$ has a topological Jordan decomposition
$u=u_s+u_n$, independent of the choice of parahoric subalgebra containing it. 

\subsection{Relation between root valuations and the topological Jordan
decomposition} 
Let $u \in \mathfrak a(\mathcal O)$ and assume that $u$ is regular, so that
$\alpha(u)\ne 0$ for all $\alpha \in R$. We then have the root valuation function
$r:R \to \mathbb Z_{\ge 0}$ determined by $u$, namely $r(\alpha):=\val \alpha(u)$.
Now $u$ is integral and its $\mathbb C$-semisimple part $u_s$ is simply the image
of $u$ under $\mathfrak a(\mathcal O) \twoheadrightarrow \mathfrak a(\mathbb C)
\hookrightarrow \mathfrak a(\mathcal O)$. It follows that 
\[
\{ \alpha \in R : r(\alpha) \ge 1\}=\{\alpha \in R : \alpha(u_s)=0\}.
\]
More generally, let $u$ be a regular semisimple element of $\mathfrak g(F)$ that
is $G(F)$-conjugate to some element in $\mathfrak a(\mathcal O)$. Choosing $g \in
G(F)$ such that $u_1=gug^{-1}$ lies in $\mathfrak a(\mathcal O)$, we obtain the
root valuation function $r(\alpha)=\val \alpha(u_1)$ for $u_1$. If we make a
different choice of $g$, we will change $u_1$ and $r$ by an element in the Weyl
group, but the cardinality of $\{\alpha \in R:r(\alpha) \ge 1\}$ will be
unchanged. 

Since $u$ is integral, we may consider its $\mathbb C$-semisimple part $u_s$. There
exists $h \in G(F)$ such that $v:=hu_sh^{-1}$ lies in $\mathfrak a(\mathbb C)$. Of
course we can take $h=g$, we we are not obliged to. Again $v$ is only well-defined
up to the action of the Weyl group, but the cardinality of $\{\alpha \in R:
\alpha(v)=0\}$ is independent of the choice of $h$. Moreover, taking
$h=g$, we see that 
\begin{equation}\label{eq.cardJ}
|\{\alpha \in R: r(\alpha) \ge 1\}|=|\{\alpha \in R:\alpha(v)=0\}|,
\end{equation}
a simple observation that will play an important role in the proof of the Key Lemma
\
\ref{lem.keyconj} needed to prove the  Conjugation Theorem \ref{thm.conj} for root
valuation lattices.

\section{Definition and first properties of root
valuation lattices}\label{sec.defrvl}
 We now assume
that $G$ is semisimple, not just reductive. Throughout this section we fix some
root valuation function
$r:R
\to
\mathbb Z$.  We have already discussed the subset $\mathfrak a(F)_r$. We will also
make use of the lattice   
\begin{equation}
\mathfrak a(F)_{\ge r}:=\{u \in \mathfrak a(F):
\val\alpha(u)\ge r(\alpha)\quad \forall \alpha \in R\} 
\end{equation}
in $\mathfrak a(F)$ obtained as the closure of $\mathfrak
a(F)_r$.

\subsection{$A(\mathcal O)$-stable lattices in $\mathfrak g(F)$} 
A lattice $\Lambda$ in $\mathfrak g(F)$ is $A(\mathcal O)$-stable for the
adjoint action of $A(\mathcal O)$ on $\mathfrak g(F)$ if and only if it is
of the form 
\begin{equation}
\Lambda_A \oplus \bigl( \bigoplus_{\alpha \in R}
P_\alpha^{\lambda(\alpha)}\bigr)
\end{equation}
for some lattice $\Lambda_A$ in $\mathfrak a(F)$ and integers
$\lambda(\alpha)$. 

\subsection{Root valuation lattices} 
We are interested in $A(\mathcal O)$-stable lattices in $\mathfrak g(F)$
 adapted to studying the root valuation stratum  $\mathfrak
g(F)_r$. Consider then 
a  function $\lambda:R \to \mathbb Z$. To $r,\lambda$ we associate the
$A(\mathcal O)$-stable lattice $\Lambda$ defined by
\begin{equation}
\Lambda:=\mathfrak a(F)_{\ge r} \oplus \bigl( \bigoplus_{\alpha \in
R}P_\alpha^{\lambda(\alpha)}\bigr).
\end{equation}
When we wish to remember the particular $r,\lambda$ used to form this
lattice we write $\Lambda_{r,\lambda}$ rather than $\Lambda$. 

Now we seek a condition on $\lambda$  guaranteeing that a generic
element in $\Lambda$ is conjugate to  an element in $\mathfrak a(F)_r$.
To this end we consider the admissible proalgebraic subgroup $\{g \in G(F):
g\Lambda g^{-1}=\Lambda\}$, whose identity component we denote by
$K=K_{r,\lambda}$. Note that $K$ contains $A(\mathcal O)$. We write
$\mathfrak k=\mathfrak k_{r,\lambda}$ for the Lie algebra of $K$. By
construction the adjoint action of $K$ preserves $\Lambda$, so we may
consider the morphism 
\begin{equation}
\varphi=\varphi_{r,\lambda}:K\times \mathfrak a(F)_r \to \Lambda
\end{equation}
sending $(x,u)$ to $xux^{-1}$. We say that $\Lambda$ is a
\emph{root valuation lattice} if the morphism $\varphi$ is a
submersion. When $\Lambda$ is a root valuation lattice, the image
$\Lambda_0$ of $\varphi$ is an open Zariski dense subset of $\Lambda$,
and every element of $\Lambda_0$ is $K$-conjugate to an element of
$\mathfrak a(F)_r$.  

As a basic example, for the root valuation function  taking the value
$0$ on all roots, $\mathfrak g(\mathcal O)$ is a root valuation lattice. So
too is any parahoric subalgebra of $\mathfrak g(F)$ containing $\mathfrak
a(\mathcal O)$. The following proposition determines all root valuation
lattices. It makes use of the function $r_m:R \to \mathbb Z$ defined by 
\begin{equation}
r_m(\alpha):= \max\{r(\beta): \beta \text{ is not strongly orthogonal to
$\alpha$}\}. 
\end{equation} 
Note that 
\begin{equation}
r(\alpha) \le r_m(\alpha)
\end{equation}
since $\alpha$ is not strongly orthogonal to itself. 

\begin{proposition}\label{prop.rtvlat}
Let $r$ be a root valuation function and let $\lambda:R \to \mathbb Z$ be
any function. Define a third function $k:R \to \mathbb Z$ as the difference
$k:=\lambda-r$. Then $\Lambda_{r,\lambda}$ is a root valuation lattice if
and only if $k$ satisfies the following two conditions: 
\begin{enumerate}
\item $  k(\alpha)+k(-\alpha) \ge
r_m(\alpha)-r(\alpha)$  \quad for all $\alpha \in R$, 
\item $k(\alpha)+k(\beta)-k(\alpha+\beta) \ge
r(\alpha+\beta)-\min\{r(\alpha),r(\beta)\}$ \quad
for all $\alpha,\beta \in R$  such that $\alpha+\beta \in R$.
\end{enumerate} 
When $\Lambda_{r,\lambda}$ is a root valuation lattice, the Lie algebra of
$K_{r,\lambda}$ is given by  
\begin{equation}
\mathfrak k_{r,\lambda}=\mathfrak a(\mathcal O) \oplus
\bigl(\bigoplus_{\alpha \in R} P_\alpha^{k(\alpha)}\bigr).
\end{equation}
\end{proposition}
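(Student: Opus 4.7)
The plan is to identify the Lie algebra $\mathfrak{k} = \mathfrak{k}_{r,\lambda}$ of $K_{r,\lambda}$ with the infinitesimal normalizer $\mathfrak{n}(\Lambda) := \{X \in \mathfrak{g}(F) : [X,\Lambda] \subset \Lambda\}$, and then analyze the differential of $\varphi$ at $(1,u_0)$ for a generic $u_0 \in \mathfrak{a}(F)_r$. Since $\Lambda$ is $A(\mathcal{O})$-stable, so is $\mathfrak{n}(\Lambda)$, hence it decomposes according to $A$-weights as $\mathfrak{n}_0 \oplus \bigoplus_{\alpha} \mathfrak{n}_\alpha$ with each $\mathfrak{n}_\alpha$ an $\mathcal{O}$-submodule of $\mathfrak{g}_\alpha(F)$. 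I would first pin down the easy inclusions: bracketing with a generic $Y_0 \in \mathfrak{a}(F)_{\ge r}$ sends $X_\alpha$ to $-\alpha(Y_0)X_\alpha$, and generically $\val \alpha(Y_0)=r(\alpha)$, so $\mathfrak{n}_\alpha \subset P_\alpha^{k(\alpha)}$; similarly the $\mathfrak{g}_\beta$-component of $[X_0, Y_\beta]$ forces $X_0 \in \mathfrak{a}(\mathcal{O})$, with equality because $\mathfrak{a}(\mathcal{O})$ manifestly normalizes $\Lambda$.

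Next I would determine exactly when the equality $\mathfrak{n}_\alpha = P_\alpha^{k(\alpha)}$ holds by testing $[X_\alpha, \Lambda] \subset \Lambda$ for an arbitrary $X_\alpha \in P_\alpha^{k(\alpha)}$, summand by summand: (i) against $\mathfrak{a}(F)_{\ge r}$ the bracket sits in $P_\alpha^{k(\alpha)+r(\alpha)} = P_\alpha^{\lambda(\alpha)}$ automatically; (ii) against $P_{-\alpha}^{\lambda(-\alpha)}$ the bracket lies in $\epsilon^{k(\alpha)+\lambda(-\alpha)}\mathcal{O}\cdot H_\alpha$, and containment in $\mathfrak{a}(F)_{\ge r}$ translates precisely to $k(\alpha)+k(-\alpha) \ge r_m(\alpha) - r(\alpha)$ — the appearance of $r_m$ reflecting that the constraint $\val \beta(H) \ge r(\beta)$ on a multiple of $H_\alpha$ is nontrivial exactly for $\beta$ with $\beta(H_\alpha) \ne 0$, i.e.\ for $\beta$ not strongly orthogonal to $\alpha$; (iii) against $P_\beta^{\lambda(\beta)}$ with $\alpha+\beta \in R$ and $\beta \ne -\alpha$ the bracket lies in $P_{\alpha+\beta}^{k(\alpha)+\lambda(\beta)}$, and containment in $P_{\alpha+\beta}^{\lambda(\alpha+\beta)}$ amounts to $k(\alpha)+k(\beta)-k(\alpha+\beta) \ge r(\alpha+\beta)-r(\beta)$; symmetry in $\alpha,\beta$ upgrades this to condition~(2). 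Thus $\mathfrak{n}(\Lambda) = \mathfrak{a}(\mathcal{O})\oplus\bigoplus P_\alpha^{k(\alpha)}$ exactly when (1) and (2) hold, and otherwise $\mathfrak{n}_\alpha$ is strictly smaller than $P_\alpha^{k(\alpha)}$ in at least one root direction.

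Finally, I would compute $d\varphi_{(1,u_0)}(X,Y) = [X,u_0]+Y$: its $\mathfrak{a}$-component is $Y$, which already surjects onto $\mathfrak{a}(F)_{\ge r}$, while its $\mathfrak{g}_\alpha$-component is $-\alpha(u_0)X_\alpha$. Since $u_0 \in \mathfrak{a}(F)_r$ gives $\val \alpha(u_0) = r(\alpha)$, multiplication by $\alpha(u_0)$ carries $P_\alpha^{k(\alpha)}$ isomorphically onto $P_\alpha^{\lambda(\alpha)}$; hence the image on the $\alpha$-summand fills out $P_\alpha^{\lambda(\alpha)}$ iff $\mathfrak{n}_\alpha = P_\alpha^{k(\alpha)}$. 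Combining, $\varphi$ is a submersion iff both (1) and (2) hold, and in that case $\mathfrak{k}_{r,\lambda} = \mathfrak{n}(\Lambda)$ has the stated form. For the last identification of $K_{r,\lambda}$ with a genuine connected admissible proalgebraic subgroup I would note that (1) and (2), combined with the convexity $r(\alpha+\beta)\ge \min\{r(\alpha),r(\beta)\}$ and $r_m(\alpha)\ge r(\alpha)$, force $k$ to be concave, so Corollary~\ref{cor.bt} and the subsequent remark produce a group with exactly this Lie algebra, which must coincide with the identity component of the stabilizer of $\Lambda$. The main obstacle is bookkeeping — keeping straight that $\Lambda$ is controlled by $\lambda$ while its stabilizer is controlled by $k = \lambda - r$ — and in particular correctly extracting condition~(1) from the $\alpha = -\beta$ case, where the function $r_m$ enters precisely because $\mathfrak{a}(F)_{\ge r}$ is strictly smaller than $\mathfrak{a}(\mathcal{O})$ and imposes nontrivial constraints only along those directions not strongly orthogonal to $\alpha$.
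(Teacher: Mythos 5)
Your overall strategy is correct and matches the paper's: identify $\mathfrak k$ with the infinitesimal normalizer, exploit $A(\mathcal O)$-invariance to decompose it into weight spaces, compute the normalizer condition summand by summand, and then translate the submersion condition for $d\varphi_{(1,u)}$ into the equality $\mathfrak n_\alpha = P_\alpha^{k(\alpha)}$. However, there is one genuine gap in step (ii), and it hides exactly the combinatorial content that makes this proposition nontrivial.

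You assert that the condition $\val\beta(H)\ge r(\beta)$ on a multiple $H$ of $H_\alpha$ is nontrivial ``exactly for $\beta$ with $\beta(H_\alpha)\ne 0$, i.e.\ for $\beta$ not strongly orthogonal to $\alpha$.'' This equivalence is false: $\beta(H_\alpha)\ne 0$ means $\beta$ is not \emph{orthogonal} to $\alpha$, which is strictly stronger than being not strongly orthogonal. (Recall $\alpha,\beta$ strongly orthogonal means $\alpha\pm\beta\notin R\cup\{0\}$, whereas orthogonal means $\langle\beta,\alpha^\vee\rangle=0$; in types $B_2$, $C_2$, $F_4$ etc.\ there are pairs that are orthogonal but not strongly orthogonal.) So the raw condition you extract from the bracket $[\mathfrak g_\alpha,\mathfrak g_{-\alpha}]$ is
\[
k(\alpha)+k(-\alpha)\ \ge\ r(\beta)-r(\alpha)
\quad\text{for all $\beta$ \emph{not orthogonal} to $\alpha$,}
\]
which is a priori \emph{weaker} than condition (1), since $r_m(\alpha)$ runs the maximum of $r(\beta)$ over the larger set of $\beta$ not \emph{strongly} orthogonal to $\alpha$. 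The paper derives precisely this weaker ``orthogonal'' inequality as its intermediate condition (i), and then proves the nonobvious fact that (i) and (ii) together \emph{imply} condition (1): one applies (ii) to $\alpha+\beta$ and $-\alpha$, adds the result to (ii) for $\alpha,\beta$ to get $k(\alpha)+k(-\alpha)\ge r(\alpha+\beta)-r(\alpha)$ whenever $\alpha+\beta\in R$, observes by symmetry in $\pm\alpha$ that the same holds when $\alpha-\gamma\in R$, and finally uses the case $\alpha=\beta$ of (i) to get $k(\alpha)+k(-\alpha)\ge 0$, covering the case $\gamma=\pm\alpha$. Without this argument the equivalence you claim in step (ii) does not hold, and the stated condition (1) has not been established. (The remainder of your argument --- the inclusions $\mathfrak n_\alpha\subset P_\alpha^{k(\alpha)}$ and $\mathfrak n_0=\mathfrak a(\mathcal O)$, the step (iii) giving condition (2) up to the $\alpha\leftrightarrow\beta$ symmetrization, the differential computation, and the concavity observation producing the group $K_{r,\lambda}$ --- is sound and agrees with the paper.)
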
 

\begin{proof} As usual we omit the subscripts $r,\lambda$ on
$\Lambda,K,\mathfrak k$. In reading this proof one needs to bear in mind
that $r(-\alpha)=r(\alpha)$ for all roots $\alpha$. Since
$K$ contains
$A(\mathcal O)$, the lattice
$\mathfrak k$ is $A(\mathcal O)$-stable, hence of the form 
\begin{equation}
\bigl(\mathfrak k \cap \mathfrak a(F)\bigr) \oplus \bigl( \bigoplus_{\alpha
\in R} P_\alpha^{l(\alpha)}\bigr)
\end{equation} 
for some function $l:R \to \mathbb Z$.  Moreover 
\begin{equation}
\mathfrak k=\{ X \in \mathfrak g(F): [X,\Lambda] \subset \Lambda\},
\end{equation}
from which it follows that 
\begin{equation}
\mathfrak k \cap \mathfrak a(F)=\mathfrak a(\mathcal O)
\end{equation}
and that $l(\alpha)$ is the smallest integer satisfying the conditions 
\begin{enumerate}
\item $l(\alpha)+r(\alpha) \ge \lambda(\alpha)$ 
\item $\epsilon^{l(\alpha)+\lambda(-\alpha)}H_\alpha \in \mathfrak
a(F)_{\ge r}$ 
\item $l(\alpha)+\lambda(\beta) \ge \lambda(\alpha+\beta)$ for all $\beta
\in R$ such that $\alpha+\beta \in R$. 
\end{enumerate} 
Here $H_\alpha$ denotes the coroot $\alpha^\vee$, viewed as an element
in
$\mathfrak a$. Expanding out the second condition, and using the equality
$\lambda=k+r$, we find that $l(\alpha)$ is the maximum of the (finite) set
$S=S_1\cup S_2 \cup S_3$ of integers, where 
\begin{enumerate}
\item $S_1=\{k(\alpha)\}$  
\item $S_2=\{r(\beta)-r(\alpha)-k(-\alpha):\beta \text{ is not
orthogonal to $\alpha$}\}$ 
\item $S_3=\{k(\alpha+\beta)+r(\alpha+\beta)-k(\beta)-r(\beta): \beta
\text{ satisfies $\alpha+\beta \in R$}\}$.
\end{enumerate} 

It follows from $K$-equivariance that $\varphi$ is a submersion if and only
if this is so at all points of the form $(1,u)$ with $u \in \mathfrak
a(F)_r$. Since $\varphi$ is a submersion at $(1,u)$ if and only if 
\begin{equation*}
\mathfrak a(F)_{\ge r}+[\mathfrak k,u]=\Lambda,
\end{equation*}
we conclude that $\Lambda$ is a root valuation lattice if and only if the
functions $l$ and $k$ coincide. In particular the last statement of the
proposition is now clear. 
Furthermore, in view of our description of
$l(\alpha)$ as a maximum of a certain set of integers, one of which is
$k(\alpha)$, we see that $\Lambda$ is a root valuation lattice if and only
if the following two conditions hold: 
\begin{enumerate} 
\item[(i)] 
$k(\alpha)+k(-\alpha)\ge r(\beta)-r(\alpha)$ whenever $\alpha$,$\beta \in
R$ are not orthogonal,
\item[(ii)] $k(\alpha) +k(\beta)-k(\alpha+\beta)\ge
r(\alpha+\beta)-r(\beta)$ for all $\alpha,\beta \in R$ such that $\alpha
+\beta \in R$. 
\end{enumerate}

In order to relate (i),(ii) to the somewhat different looking conditions
(1),(2) in the statement of the proposition, we are going to prove the
following claim: conditions (i) and (ii) together imply that 
\begin{equation}
k(\alpha)+k(-\alpha)\ge r(\gamma)-r(\alpha)
\end{equation}
whenever $\alpha,\gamma \in R$ are not strongly orthogonal, a statement
which is obviously equivalent to condition (1) of the proposition.  

For this we
apply (ii) to the roots $\alpha+\beta,-\alpha$ (whose sum is the root
$\beta$), obtaining the inequality 
\begin{equation}
k(\alpha+\beta)+k(-\alpha)-k(\beta)\ge r(\beta)-r(\alpha),
\end{equation}
 which, when added to (ii), yields the inequality 
\begin{equation}
k(\alpha)+k(-\alpha)\ge r(\alpha+\beta)-r(\alpha),
\end{equation}
valid whenever $\alpha+\beta$ is a root. In other words 
\begin{equation}
k(\alpha)+k(-\alpha)\ge r(\gamma)-r(\alpha)
\end{equation} 
whenever $\gamma$ is a root such that the difference of $\alpha$ and
$\gamma$ is a root. This last inequality also holds when the sum of
$\alpha$ and $\gamma$ is a root, as we see from the fact that the left side
of the inequality is invariant under replacing $\alpha$ by its negative,
 as is the term $r(\alpha)$ on the right side. We conclude that
\begin{equation}\label{eq.spam}
k(\alpha)+k(-\alpha)\ge r(\gamma)-r(\alpha)
\end{equation}
for any $\gamma \in R$ such that either $\alpha-\gamma$ or $\alpha+\gamma$
is root. In addition, taking $\alpha=\beta$ in (i), we see  that 
\begin{equation}\label{eq.morespam}
k(\alpha)+k(-\alpha)\ge 0.
\end{equation} 
Combining  \eqref{eq.spam} and \eqref{eq.morespam}, we conclude that 
\begin{equation}
k(\alpha)+k(-\alpha)\ge r(\gamma)-r(\alpha)
\end{equation}
whenever either $\alpha-\gamma$ or $\alpha+\gamma$ lies in $R \cup \{0\}$,
or, in other words, whenever $\alpha$ and $\gamma$ are not strongly
orthogonal. 

We have just shown that (i) and (ii)
imply condition (1) of the proposition. On the other hand, condition (1) of
the proposition trivially implies (i). 
At this point it remains only to observe that  condition (2) of the
proposition is equivalent to the conjunction of (ii) and the condition
obtained from (ii) by switching $\alpha$ and $\beta$.
\end{proof} 

\begin{remark}
As we just saw at the end of the proof the previous
proposition, we may work with (2) in the less symmetrical form 
\begin{equation}
k(\alpha)+k(\beta)-k(\alpha+\beta) \ge r(\alpha+\beta)-r(\alpha)
\end{equation} 
whenever it is convenient to do so. 
\end{remark}

\begin{remark}\label{rem.rtl}
If $\Lambda$ is a root valuation lattice for $r$, then $\epsilon^n\Lambda$ is a
root valuation lattice for the root valuation function $r+n$. Thus it is harmless
to work with root valuation functions taking values in $\mathbb Z_{\ge 0}$,
whenever it is convenient to do so. Now assume that $r$ does take
values in $\mathbb Z_{\ge 0}$. Then the lattice $\Lambda$ is contained in
$\mathfrak k$  (as is clear from Proposition \ref{prop.rtvlat}), and therefore
$\Lambda$ is a normal subalgebra of $\mathfrak k$ (since $[\mathfrak
k,\Lambda]\subset
\Lambda$).  Corollary \ref{cor.bt} shows that $\mathfrak k$ is
contained in some parahoric subalgebra and hence the same is  true of
$\Lambda$.  Moreover the discussion in subsection
\ref{sub.appl} shows that there is a connected admissible proalgebraic subgroup
$\tilde
\Lambda$ of $G(F)$ having Lie algebra $\Lambda$, and that
 a maximal torus in 
$\tilde
\Lambda$ is obtained by taking $S(\mathbb C)$, where $S$ is the subtorus of $A$
whose Lie algebra consists of all  $ u \in \mathfrak a(\mathbb C)$ such that 
$\alpha(u)=0$ for all 
$\alpha \in R_1=\{\alpha \in R: r(\alpha) \ge 1\}$ (in other words, $S$ is the
connected center of the Levi subgroup of $G$ containing $A$ and having $R_1$ as
its root system). These observations will  soon be used  in the proof of the key
lemma needed in the Conjugation Theorem for root valuation lattices. 
\end{remark}

\section{Conjugation Theorem}\label{sec.conjthm} 
\subsection{Setup for Conjugation Theorem}
Again we assume that $G$ is semisimple, and again we fix a root valuation function
$r:R \to \mathbb Z$. To $r$ are associated  subsets $R_n:=\{\alpha \in R:
r(\alpha) \ge n\}$ and  Levi subgroups $M_n$ containing $A$ such that the root
system of $M_n$ is
$R_n$. Write $L_n$ for the derived group of $M_n$ and $A_n$ for the connected
center of $M_n$. The corresponding Lie algebras are then related by $\mathfrak
m_n=\mathfrak l_n \oplus \mathfrak a_n$.  

We will consider 
some root valuation lattice $\Lambda$ for $r$. Recall that $\Lambda$ has the form
\[
\Lambda=\Lambda_A \oplus \bigl( \bigoplus_{\alpha \in R}
P_\alpha^{\lambda(\alpha)}\bigr),
\]
where $\Lambda_A=\mathfrak a(F)_{\ge r}=\{\sum_iu_i\epsilon^i: u_i \in \mathfrak
a_{i+1}(\mathbb C) \quad \forall i \in \mathbb Z \}$. 

As before we consider the normalizer $\mathfrak k$ of $\Lambda$ in $\mathfrak
g(F)$. Thus $\mathfrak k=\{ u \in \mathfrak g(F) : [u,\Lambda] \subset \Lambda
\}$. Again we denote by $K$ the connected admissible proalgebraic subgroup  whose
Lie algebra is $\mathfrak k$.

The goal of this section is to prove the following Conjugation Theorem. 
\begin{theorem}\label{thm.conj}
Let $r':R \to \mathbb Z$ be another root valuation function and assume that 
\[
|\{ \alpha \in R:r'(\alpha) \ge n\}| \le |R_n|
\] 
for all $n \in \mathbb Z$. Assume further that $u \in \Lambda$ lies in the
root valuation stratum $\mathfrak g(F)_{ r'}$. Then there exists $k \in K$ such
that
$k^{-1}uk \in \mathfrak a(F)_r$. In particular there exists an element $w$ in the
Weyl group $W$ such that $r'=wr$. 
\end{theorem}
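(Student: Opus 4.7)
My plan is an inductive proof on $|R|$, descending through Levi subgroups determined by the level sets of $r$. Via Remark \ref{rem.rtl} I normalize so that $r$ takes values in $\mathbb{Z}_{\ge 0}$ with $\min r = 0$, which ensures $R_1 \subsetneq R$ (the case $R = \emptyset$ being the trivial base). Then $\Lambda$ is contained in a parahoric subalgebra, every element of $\Lambda$ is integral, and I may use the topological Jordan decomposition $u = u_s + u_n$ from Section \ref{sec10}. Because $\Lambda$ is a normal subalgebra of $\mathfrak{k}$ (Remark \ref{rem.rtl}), this decomposition computed inside the admissible proalgebraic group $\tilde{\Lambda}$ places both $u_s$ and $u_n$ in $\Lambda$.

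The core inductive step, which I expect to be the content of the key Lemma \ref{lem.keyconj}, is to produce $k \in K$ so that $k^{-1}uk$ lies in $\mathfrak{m}_1(F)$. By Subsection \ref{sub.appl} together with Proposition \ref{prop.rmtta}, the torus $S(\mathbb{C})$ is a maximal torus of $\tilde{\Lambda}$, where $S = Z(M_1)^\circ$. The $\mathbb{C}$-semisimple element $u_s$ lies in some maximal torus of $\tilde{\Lambda}$, and all such are conjugate under $\tilde{\Lambda} \subseteq K$; after this conjugation I may assume $u_s \in \mathfrak{s}(\mathbb{C}) = \mathfrak{a}_1(\mathbb{C})$. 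The hypothesis $|R'_1| \le |R_1|$, combined with equation \eqref{eq.cardJ} giving $|R'_1| = |\{\alpha : \alpha(u_s) = 0\}|$ and the automatic inclusion $\{\alpha : \alpha(u_s) = 0\} \supseteq R_1$, forces equality of these sets. Hence the connected centralizer of $u_s$ in $G$ is exactly $M_1$, and $u = u_s + u_n \in \mathfrak{m}_1(F)$.

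I then apply the induction hypothesis to the restricted data on $M_1$: the function $r|_{R_1}$, the lattice
\[
\Lambda^{M_1} := \{w \in \mathfrak{a}(F) : \val \alpha(w) \ge r(\alpha) \; \forall\, \alpha \in R_1\} \oplus \bigoplus_{\alpha \in R_1} P_\alpha^{\lambda(\alpha)},
\]
and the element $u \in \Lambda \cap \mathfrak{m}_1(F) \subseteq \Lambda^{M_1}$. The conditions of Proposition \ref{prop.rtvlat} for $\Lambda^{M_1}$ are inherited from those for $\Lambda$ (the only nontrivial check being $r_m^{M_1}(\alpha) \le r_m(\alpha)$, since one maxes over a smaller set). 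The $M_1$-root-valuation function $\tilde{r}'$ of $u$ on $R_1$ is a Weyl translate of $r'|_{R_1}$, so the cardinality hypothesis $|\tilde{R}'_n| \le |R_n|$ transfers for $n \ge 1$. A small technical adaptation handles the fact that $M_1$ is reductive but not semisimple: the topologically nilpotent part $u_n$ has zero projection to the abelian $\mathfrak{a}_1$-component, so $u_n \in \mathfrak{l}_1(F)$ (the derived Lie algebra), and the induction is applied to $u_n$ in $L_1$ with the $u_s$ contribution tracked separately. Induction then produces $k_1$ in the connected normalizer $K^{M_1}$ of $\Lambda^{M_1}$ in $M_1(F)$ with $v := k_1^{-1} u k_1 \in \mathfrak{a}(F)$ and $\val \alpha(v) = r(\alpha)$ for all $\alpha \in R_1$.

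The main obstacle I foresee is the concluding step, verifying that $k_1 \in K$ and that $\val \alpha(v) = 0$ for $\alpha \in R \setminus R_1$. For the containment $K^{M_1} \subseteq K$, the Lie algebras computed by Proposition \ref{prop.rtvlat} in $M_1$ and $G$ satisfy $\Lie K^{M_1} = \mathfrak{a}(\mathcal{O}) \oplus \bigoplus_{\alpha \in R_1} P_\alpha^{k(\alpha)} \subseteq \mathfrak{a}(\mathcal{O}) \oplus \bigoplus_{\alpha \in R} P_\alpha^{k(\alpha)} = \Lie K$, and connectedness of $K^{M_1}$ then yields the group containment. Once $v \in \Lambda$, we get $v \in \Lambda \cap \mathfrak{a}(F) = \mathfrak{a}(F)_{\ge r}$, so $\val \alpha(v) \ge 0$ on $R \setminus R_1$. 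The reverse inequality comes from multiset dominance: the root valuations of $v$ sorted in decreasing order are componentwise bounded by those of $r$, and since the first $|R_1|$ entries of both already agree (they are the sorted values of $r|_{R_1}$), the remaining entries of $v$'s sorted valuations are forced to be $\le 0$; together with $\ge 0$ this gives equality to $0$. Thus $v \in \mathfrak{a}(F)_r$, completing the induction.
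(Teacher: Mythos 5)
Your overall strategy matches the paper's: use the topological Jordan decomposition and the fact that $A_1(\mathbb C)$ is a maximal torus in $\tilde\Lambda$ to conjugate $u$ into $\mathfrak m_1(F)$ (this is precisely Lemma \ref{lem.keyconj}), then descend inductively through the Levi chain and close with a cardinality/dominance argument. The paper phrases the descent as an induction on $n$ via the statements $(S_n)$ rather than as a recursion on $|R|$, but this is cosmetic. However, your ``small technical adaptation'' contains a genuine error. After conjugating so that $u_s\in\mathfrak a_1(\mathbb C)$, you assert that the topologically nilpotent part $u_n$ ``has zero projection to the abelian $\mathfrak a_1$-component'' and hence lies in $\mathfrak l_1(F)$. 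This is false: writing $u_{A_1}$ for the $\mathfrak a_1$-component of $u$ in the decomposition $\mathfrak m_1=\mathfrak l_1\oplus\mathfrak a_1$, one has $u_s$ equal only to the \emph{constant term} of $u_{A_1}$, so the $\mathfrak a_1$-component of $u_n$ is $u_{A_1}$ minus its constant term, which lies in $\epsilon\,\mathfrak a_1(\mathcal O)$ but need not vanish. Consequently $u_n\notin\Lambda_{L_1}$ in general, and the induction hypothesis cannot legitimately be applied to $u_n$.

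The correct move, as in the paper's proof, is to use the Lie-algebra decomposition $u=u_{L_1}+u_{A_1}$ along $\mathfrak m_1=\mathfrak l_1\oplus\mathfrak a_1$, which is entirely different from the topological Jordan decomposition $u=u_s+u_n$. One applies the recursion (after rescaling by $\epsilon^{-n}$ to restore nonnegativity of the restricted root valuation function, a renormalization you would need at every level of the recursion, not just once at the outset) to $u_{L_1}\in\Lambda_{L_1}$, which is regular semisimple in $\mathfrak l_1(F)$, and tracks $u_{A_1}$ separately; since $u_{A_1}$ is central in $\mathfrak m_1$ it is untouched by conjugation under $L_1(F)$, so the conjugate of $u$ lands in $\mathfrak a(F)$ as required. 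With that substitution, the rest of your argument is sound: the containment $K^{L_1}\subseteq K$ via Proposition \ref{prop.rtvlat} is correct, and your final multiset-dominance step is a valid alternative phrasing of the paper's set-inclusion argument ($v\in\mathfrak a(F)_{\ge r}$ gives $\val\alpha(v)\ge r(\alpha)$ pointwise, while the cardinality hypothesis forces the sorted valuations of $v$ to be dominated by those of $r$, and these two inequalities together force equality).
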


\subsection{Key lemma}
The next lemma is the main step in the proof of the Conjugation Theorem. 
In the lemma we assume that $r$ takes nonnegative values, and therefore from
Remark \ref{rem.rtl}  we obtain a connected admissible proalgebraic subgroup
$\tilde\Lambda$ of $G(F)$ having Lie algebra $\Lambda$. 
\begin{lemma}\label{lem.keyconj}
Assume that our given root valuation function takes values in the set of
nonnegative integers. Let $u \in \Lambda$ and assume that $u$ is $G(F)$-conjugate
to an element $u'' \in \mathfrak a(F)$. Assume further that $u''$ is regular, so
that the root valuation function $r''(\alpha)=\val \alpha(u'')$ is defined. 
 Then
the following conclusions hold. 
\begin{enumerate}
\item $r''(\alpha) \ge 0$ for all $\alpha \in R$. 
\item $|\{\alpha \in R:r''(\alpha) \ge 1\}| \ge |R_1|$. 
\item If equality holds in the second item, then there exists $k \in \tilde\Lambda$
such that
$k^{-1}uk \in \mathfrak m_1(F)$. 
\end{enumerate}
\end{lemma}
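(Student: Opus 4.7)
The plan is to compute the topological Jordan decomposition $u=u_s+u_n$ inside the admissible proalgebraic group $\tilde\Lambda$ (which exists by Remark \ref{rem.rtl}, since $r$ is nonnegative) and then transfer information about $u_s$ into information about $r''$ via the identity \eqref{eq.cardJ}.

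For (1), Remark \ref{rem.rtl} places $\Lambda$ inside some parahoric subalgebra of $\mathfrak g(F)$, so $u\in\Lambda$ is integral; hence its $G(F)$-conjugate $u''$ is also integral. Since an integral element of $\mathfrak a(F)$ necessarily lies in $\mathfrak a(\mathcal O)$ (as recalled in the introduction), we obtain $r''(\alpha)=\val\alpha(u'')\ge 0$ for all $\alpha\in R$.

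For (2), recall from Remark \ref{rem.rtl} that a maximal torus of $\tilde\Lambda$ is $S(\mathbb C)=A_1(\mathbb C)$, with Lie algebra $\mathfrak a_1(\mathbb C)=\bigcap_{\alpha\in R_1}\ker\alpha\subset\mathfrak a(\mathbb C)$. The key intermediate claim is that the $\mathbb C$-semisimple part $u_s\in\Lambda$ is conjugate by some $k_1\in\tilde\Lambda$ into $\mathfrak a_1(\mathbb C)$. Granting this, pick $g\in G(F)$ with $gug^{-1}=u''$; by the compatibility of topological Jordan decomposition with conjugation we have $gu_sg^{-1}=u''_s$, which by section \ref{sec10} coincides with the image of $u''$ under $\mathfrak a(\mathcal O)\twoheadrightarrow\mathfrak a(\mathbb C)\hookrightarrow\mathfrak a(\mathcal O)$. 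Thus $k_1^{-1}u_sk_1\in\mathfrak a_1(\mathbb C)$ and $u''_s\in\mathfrak a(\mathbb C)$ are $G(F)$-conjugate, hence $W$-conjugate, so the numbers of roots vanishing on them agree. Since every $\alpha\in R_1$ vanishes on $k_1^{-1}u_sk_1$, identity \eqref{eq.cardJ} yields
\[
|\{\alpha\in R:r''(\alpha)\ge 1\}|=|\{\alpha\in R:\alpha(u''_s)=0\}|\ge|R_1|,
\]
which is (2).

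For (3), equality in (2) combined with the inclusion $R_1\subset\{\alpha:\alpha(k_1^{-1}u_sk_1)=0\}$ forces the two sets to coincide, so the centralizer of $k_1^{-1}u_sk_1$ in $\mathfrak g(F)$ is precisely $\mathfrak m_1(F)$. Since $u_n$ commutes with $u_s$, conjugation by $k_1$ places both parts in $\mathfrak m_1(F)$, giving $k_1^{-1}uk_1\in\mathfrak m_1(F)$ with $k_1\in\tilde\Lambda$, as required. The main obstacle in the argument is the intermediate claim invoked in (2): I must conjugate $u_s$ \emph{within the proalgebraic group} $\tilde\Lambda$ (not merely within $G(F)$) into the Lie algebra of the prescribed maximal torus $A_1(\mathbb C)$. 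This is a proalgebraic analogue of the standard conjugacy result for semisimple elements in the Lie algebra of a linear algebraic group over $\mathbb C$, and I would establish it by applying the finite-dimensional version compatibly to the quotients $\tilde\Lambda/K_{x,n}$ (for some parahoric $K_x$ containing $\tilde\Lambda$) and patching the conjugators in the inverse limit, using the characteristic-zero splitting of the prounipotent radical exactly as in the discussion preceding Proposition \ref{prop.rmtta}.
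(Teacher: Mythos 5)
Your proof is correct and takes essentially the same route as the paper: integrality of $u''$ forces $u'' \in \mathfrak a(\mathcal O)$ for (1); the $\mathbb C$-semisimple part $u_s$ is conjugated inside $\tilde\Lambda$ into $\mathfrak a_1(\mathbb C)$ (using that $A_1(\mathbb C)$ is a maximal torus of $\tilde\Lambda$ by Remark~\ref{rem.rtl}), and \eqref{eq.cardJ} then gives (2); and equality in (2) identifies the centralizer of the conjugated $u_s$ with $M_1$, yielding (3). The only stylistic difference is that you explicitly flag, and sketch an inverse-limit argument for, the step that a $\mathbb C$-semisimple element of $\Lie\tilde\Lambda$ can be conjugated into the Lie algebra of the chosen maximal torus by an element of $\tilde\Lambda$; the paper treats this as an immediate consequence of the maximal-torus conjugacy discussion in section~\ref{sec9}, but your making it explicit is reasonable and the proposed patching argument is a standard way to justify it.
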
 
\begin{proof}
We know from Remark \ref{rem.rtl} that $\Lambda$ is contained in some parahoric
subalgebra, and therefore
$u$ and $u''$ are integral. It follows easily from the fourth part of Lemma 
\ref{lem.handy}  that
$u''
\in
\mathfrak a(\mathcal O)$, and the first item of the lemma follows.

We also know from Remark \ref{rem.rtl}  that $A_1(\mathbb
C)$ is a maximal torus in  $\tilde
\Lambda$. Therefore there exists $k \in \tilde\Lambda$  such that
$v:=k^{-1}u_sk$ lies in  
$\mathfrak a_1(\mathbb C)$, $u_s$ being the $\mathbb C$-semisimple part of $u$
(see subsection \ref{sub.tJd}). The second item of the lemma then follows from  
equation \eqref{eq.cardJ}. If equality holds in the second item, then the
set of roots in $R$ that vanish on $v$ must be precisely equal to $R_1$, the root
system of $M_1$. Thus the centralizer of $v$ in $G$ must in this case be $M_1$.
Since $k^{-1}uk$ centralizes $v$, we conclude that it lies in $\mathfrak m_1(F)$,
as desired. 
\end{proof}

\subsection{Proof of the conjugation theorem}
Now we prove the conjugation theorem. For each integer $n$ consider the
following statement. 
\begin{equation*}\label{eq.state}
\text{There exists $ k \in K $ such that $k^{-1} uk \in \mathfrak m_n(F)$}.
\tag{$S_n$} 
\end{equation*}
We claim that the statement \eqref{eq.state} is true for every integer $n$. This
is obvious when $n \ll 0$, so we may use induction on $n$. We now assume that
\eqref{eq.state} does hold for $n$ and will show that it also holds for $n+1$. 
It is harmless to replace $u$ by any conjugate under $K$, so we may as well assume
that $u$ itself lies in $\mathfrak m_n(F)$. Since $u$ is split regular semisimple
in $\mathfrak g(F)$, it is also split regular semisimple in $\mathfrak m_n(F)$. 

Denote by $r_n$ the restriction of $r$ to $R_n$; thus $r_n$ is a root valuation
function for the semisimple group $L_n$. Using Proposition \ref{prop.rtvlat}, one
checks that  $r_n$ and the integers
$\lambda(\alpha)$ ($\alpha\in R_n$) yield a root valuation lattice $\Lambda_n
\subset \mathfrak l_n(F)$ for $(L_n,r_n)$, and it is not difficult to see that 
\[
\Lambda \cap \mathfrak m_n(F) =\Lambda_n \oplus (\Lambda_A \cap \mathfrak a_n(F)).
\]
Since $u$ lies in $\Lambda \cap \mathfrak m_n(F)$, it decomposes uniquely as
$u=u_{L_n}+u_{A_n}$ with $u_{L_n} \in \Lambda_n$ and $u_{A_n} \in \Lambda_A \cap
\mathfrak a_n(F)$; clearly $u_{L_n}$ is split regular semisimple in $\mathfrak
l_n(F)$.  

Now $\Lambda'_n:=\epsilon^{-n}\Lambda_n$ is a root valuation lattice in $\mathfrak
l_n(F)$ for the root valuation function $r_n-n$, and since $r_n-n$ takes
nonnegative values, we may apply the key lemma to $L_n$, $r_n-n$ and the element
$\epsilon^{-n}u_{L_n}$, it being clear that $\epsilon^{-n}u_{L_n}$ is
$L_n(F)$-conjugate to an element $u'' \in (\mathfrak a\cap \mathfrak l_n)(F)$
that is regular for $L_n$ (since $u_{L_n}$ is split regular semisimple in
$\mathfrak l_n(F)$).

Denote by $r'':R_n \to \mathbb Z$ the root valuation function for $u''$. Since $u$
is conjugate under $L_n(F)$ to $u_{A_n}+\epsilon^nu''$ and is also
$G(F)$-conjugate to an element in $\mathfrak a(F)_{r'}$, we see that there exists
$w \in W$ such that the root valuation function for $u_{A_n}+\epsilon^nu''$ is
equal to $w(r')$. For $\alpha \in R_{L_n}=R_n$ we have $\val
\alpha(u_{A_n}+\epsilon^nu'')=n+\val \alpha(u'')$, which shows that
$r''(\alpha)+n=r'(w^{-1}\alpha)$. It follows that $r'(w^{-1}\alpha) \ge n+1$ for
any
$\alpha \in R_n$ such that $r''(\alpha) \ge 1$. Therefore 
\[
|\{\alpha \in R_n :r''(\alpha) \ge 1 \}| \le |\{ \alpha \in R: r'(w^{-1}\alpha)
\ge n+1 \}| \le |R_{n+1}|,
\]
showing that equality holds in the second item of the key lemma. By the third item
of that lemma we conclude that there exists
$k\in\tilde\Lambda'_n \subset L_n(F)$
 such that $k^{-1}(\epsilon^{-n}u_{L_n}) k \in \mathfrak
m_{n+1}(F)$, and it is then immediate that $k^{-1}uk \in \mathfrak m_{n+1}(F)$.
Moreover $k$ necessarily lies in $K$, since it follows easily from Proposition 
\ref{prop.rtvlat} that $\Lambda'_n \subset \mathfrak k$. 

Thus we have shown that the statement \eqref{eq.state} is true for every integer
$n$. Taking $n \gg 0$, we find that there exists $k \in K$ such that $k^{-1}uk \in
\mathfrak a(F)$. Since $K$ normalizes $\Lambda$, we also have $k^{-1}uk \in
\Lambda$, so that $k^{-1}uk$ actually lies in the intersection of $\Lambda$ and
$\mathfrak a(F)$, namely $\mathfrak a(F)_{\ge r}$. Let $\tilde r$ be the root
valuation function for
$k^{-1}uk$. Then  $\tilde r(\alpha) \ge
r(\alpha)$ for all $\alpha \in R$. Also by one of the hypotheses in the theorem
there exists $w \in W$ such that $r'=w\tilde r$, and we then have $r'(\alpha) \ge
(wr)(\alpha)$ for all $\alpha \in R$. By another hypothesis in the theorem we have 
\[
|\{\alpha \in R:r'(\alpha) \ge n \}| \le |R_n| =|wR_n|=|\{\alpha \in R:
(wr)(\alpha) \ge n \}|.
\]
From this, together with the fact that $r'(\alpha) \ge
(wr)(\alpha)$ for all $\alpha \in R$, it follows easily that $r'
=wr$, and the proof of the theorem is complete. \qed

\section{Existence of big root valuation lattices}\label{sec.big}
We are going to show that there exist root valuation lattices that are big in
a suitable sense. For this we need a number of preliminary results. 

\subsection{Operation $\perp$ on subsets of $R$}
 For any subset
$S$ of
$R$ we put 
\begin{equation}
S^\perp:=\{\beta \in R: \beta \text{ is strongly orthogonal to all roots
in $S$}\}.
\end{equation}
 
\begin{lemma}\label{lem.perp}
Let $S,T$ be  subsets of $R$. Then 
\begin{enumerate}
\item If $S \subset T$, then $S^\perp \supset T^\perp$. 
\item $S \subset S^{\perp\perp}$. 
\item $S^{\perp\perp\perp}=S^\perp$. 
\end{enumerate}
\end{lemma}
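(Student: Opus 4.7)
The plan is to prove the three parts in order, since this is a standard Galois-connection/closure-operator argument, with part (3) following formally from (1) and (2). The key fact to keep in mind is that strong orthogonality of roots is a symmetric relation, which is what makes the definition of $\perp$ behave like a polarity.

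For part (1), the argument is immediate: suppose $S \subset T$ and let $\beta \in T^{\perp}$. Then $\beta$ is strongly orthogonal to every root in $T$, hence \emph{a fortiori} to every root in the smaller set $S$, so $\beta \in S^{\perp}$. For part (2), the observation is that if $\alpha \in S$ and $\beta \in S^{\perp}$, then by definition $\beta$ is strongly orthogonal to $\alpha$; by symmetry of strong orthogonality, $\alpha$ is strongly orthogonal to $\beta$. Since this holds for every $\beta \in S^{\perp}$, we conclude $\alpha \in (S^{\perp})^{\perp} = S^{\perp\perp}$.

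For part (3), I would deduce both inclusions formally from (1) and (2). Applying (2) to the set $S^{\perp}$ in place of $S$ gives $S^{\perp} \subset S^{\perp\perp\perp}$. Conversely, applying (2) to $S$ gives $S \subset S^{\perp\perp}$, and then applying (1) to this inclusion (inside the $\perp$) yields $S^{\perp\perp\perp} = (S^{\perp\perp})^{\perp} \subset S^{\perp}$. Combining the two inclusions gives the desired equality.

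There is no real obstacle here; the only thing one has to be careful about is invoking the symmetry of strong orthogonality in the proof of (2), since the definition of $S^{\perp}$ is stated asymmetrically (``$\beta$ strongly orthogonal to all roots in $S$'' rather than ``all roots in $S$ strongly orthogonal to $\beta$''). Once that symmetry is noted, the whole lemma is a formal exercise and does not use anything special about root systems beyond it.
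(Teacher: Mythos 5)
Your proof is correct and takes the same approach as the paper, which simply notes that (1) and (2) are obvious and together imply (3); you have just spelled out the standard Galois-connection deduction of (3) from (1) and (2) explicitly.
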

\begin{proof}
(1) and (2) are obvious, and together they imply (3). 
\end{proof}

\begin{lemma}
Let $\alpha \in R$. 
\begin{enumerate}
\item The centralizer of $\mathfrak g_\alpha$ in
$\mathfrak g$ is 
\begin{equation}\label{eq.acent}
\{u \in \mathfrak a: \alpha(u)=0\} \oplus \bigl(\bigoplus_{\beta \in
R:\alpha+\beta \notin R\cup\{0\}} \mathfrak g_\beta \bigr).
\end{equation} 
\item 
The centralizer in $\mathfrak g$ of the copy of $\mathfrak{sl}_2$ spanned
by $H_\alpha$, $\mathfrak g_\alpha$ and $\mathfrak g_{-\alpha}$ is 
\begin{equation}
\{u \in \mathfrak a: \alpha(u)=0\} \oplus \bigl(\bigoplus_{\beta \in
\{\alpha\}^\perp} \mathfrak g_\beta \bigr).
\end{equation}
\end{enumerate}
\end{lemma}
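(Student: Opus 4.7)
The plan is to prove both parts by direct computation using the root-space decomposition $\mathfrak g = \mathfrak a \oplus \bigoplus_{\gamma \in R} \mathfrak g_\gamma$. Fix once and for all a nonzero vector $X_\gamma \in \mathfrak g_\gamma$ for each $\gamma \in R$, and write an arbitrary $v \in \mathfrak g$ uniquely as $v = u + \sum_{\gamma \in R} c_\gamma X_\gamma$ with $u \in \mathfrak a$ and $c_\gamma \in \mathbb C$.

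For part (1) I would compute
\[
[X_\alpha, v] = \alpha(u)\, X_\alpha + c_{-\alpha}\,[X_\alpha, X_{-\alpha}] + \sum_{\gamma \in R \setminus \{-\alpha\}} c_\gamma\, [X_\alpha, X_\gamma]
\]
and invoke the standard facts that $[X_\alpha, X_{-\alpha}]$ is a nonzero element of $\mathfrak a$ (a scalar multiple of $H_\alpha$), and that for $\gamma \ne -\alpha$ the bracket $[X_\alpha, X_\gamma]$ lies in $\mathfrak g_{\alpha + \gamma}$, being nonzero precisely when $\alpha + \gamma \in R$. Since the map $\gamma \mapsto \alpha + \gamma$ is injective, distinct nonvanishing contributions land in distinct summands of $\mathfrak g$, so projecting onto each summand shows that $[X_\alpha, v] = 0$ is equivalent to $\alpha(u) = 0$, $c_{-\alpha} = 0$, and $c_\gamma = 0$ whenever $\alpha + \gamma \in R$. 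These conditions together say $c_\gamma = 0$ whenever $\alpha + \gamma \in R \cup \{0\}$, which is exactly the complement of the index set in \eqref{eq.acent}.

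For part (2), I would observe that the centralizer of the triple $\{H_\alpha, X_\alpha, X_{-\alpha}\}$ is the intersection of the individual centralizers, and that since $H_\alpha$ is a nonzero scalar multiple of $[X_\alpha, X_{-\alpha}]$, any element centralizing both $X_\alpha$ and $X_{-\alpha}$ automatically centralizes $H_\alpha$. Hence the centralizer in question coincides with $\Cent_{\mathfrak g}(X_\alpha) \cap \Cent_{\mathfrak g}(X_{-\alpha})$. Applying part (1) to each of $\pm\alpha$ and intersecting the two descriptions, an element $v = u + \sum_\beta c_\beta X_\beta$ lies in the intersection iff $\alpha(u) = 0$ and $c_\beta = 0$ unless both $\alpha + \beta \notin R \cup \{0\}$ and $-\alpha + \beta \notin R \cup \{0\}$. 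This pair of conditions is exactly the condition $\beta \in \{\alpha\}^\perp$ (it also automatically excludes $\beta = \pm\alpha$, via the $\{0\}$ part), so the claimed formula follows.

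There is really no serious obstacle here; the only item requiring care is the bookkeeping in the projection argument for (1), namely verifying that each coefficient $c_\gamma$ is detected independently in the root-space decomposition and that the summand $\mathfrak g_\alpha$ receives a contribution only from the $\alpha(u)\,X_\alpha$ term (since $\gamma + \alpha = \alpha$ would force $\gamma = 0 \notin R$).
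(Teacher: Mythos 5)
Your proof is correct and takes essentially the same approach as the paper: both rely on the root-space decomposition and the bracket relations $[\mathfrak g_\alpha,\mathfrak g_\beta]$, the only difference being that you verify the decomposition of the centralizer by hand (via injectivity of $\gamma\mapsto\gamma+\alpha$) where the paper invokes $A$-equivariance, and you spell out the $\mathfrak{sl}_2$ step in (2) that the paper calls immediate.
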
 
\begin{proof}
Since $A$ normalizes $\mathfrak g_\alpha$, it normalizes the centralizer of
$\mathfrak g_\alpha$. Therefore this centralizer is the direct sum of its
intersections with the summands in the root space decomposition of
$\mathfrak g$. The decomposition \eqref{eq.acent} then follows from 
\begin{equation}[\mathfrak g_\alpha,\mathfrak g_\beta]=
\begin{cases}
\mathfrak g_{\alpha+\beta}&\text{if $\alpha+\beta \in R$,}\\
\mathbb C H_\alpha &\text{if $\alpha+\beta =0$,}\\
0 &\text{otherwise.}
\end{cases}
\end{equation} 
This proves (1), of which (2) is an immediate consequence. 
\end{proof}

\begin{corollary}\label{cor.pcl}
For any subset $S$ in $R$ the subset 
 $S^\perp$ is $\mathbb Z$-closed. 
\end{corollary}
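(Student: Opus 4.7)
The plan is to read off $S^\perp$ directly from part (2) of the lemma just proved, by taking the centralizer of a family of $\mathfrak{sl}_2$-triples rather than of a single one, and then to exploit the fact that a centralizer is a Lie subalgebra.

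More concretely, for each $\alpha \in S$ let $\mathfrak s_\alpha \subset \mathfrak g$ denote the copy of $\mathfrak{sl}_2$ spanned by $H_\alpha, \mathfrak g_\alpha, \mathfrak g_{-\alpha}$, and let $C$ be the centralizer in $\mathfrak g$ of the subset $\bigcup_{\alpha \in S} \mathfrak s_\alpha$. The centralizer of a union is the intersection of the individual centralizers, so applying part (2) of the previous lemma to each $\alpha \in S$ and intersecting gives
\[
C \;=\; \{u \in \mathfrak a : \alpha(u)=0 \text{ for all } \alpha \in S\} \;\oplus\; \bigoplus_{\beta \in S^\perp} \mathfrak g_\beta.
\]
In particular, a root $\beta \in R$ lies in $S^\perp$ if and only if $\mathfrak g_\beta \subset C$.

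Now I would use the fact that $C$, as a centralizer in $\mathfrak g$ of a subset, is a Lie subalgebra of $\mathfrak g$. Given $\beta_1,\beta_2 \in S^\perp$ with $\beta_1+\beta_2 \in R$, both $\mathfrak g_{\beta_1}$ and $\mathfrak g_{\beta_2}$ lie in $C$, hence
\[
\mathfrak g_{\beta_1+\beta_2} \;=\; [\mathfrak g_{\beta_1},\mathfrak g_{\beta_2}] \;\subset\; C,
\]
which by the displayed description of $C$ forces $\beta_1+\beta_2 \in S^\perp$. This is exactly the statement that $S^\perp$ is $\mathbb Z$-closed.

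There is really no main obstacle: the content of the corollary is entirely contained in the displayed formula for $C$, which in turn is an immediate consequence of part (2) of the preceding lemma together with the elementary fact that centralizers of subsets intersect. The only thing to be slightly careful about is the convention for strong orthogonality (so that symmetry $\beta \in S^\perp \Leftrightarrow -\beta \in S^\perp$ is automatic, as is needed to identify $S^\perp$ with the root support of $C$).
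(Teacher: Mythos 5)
Your proof is correct and is essentially the same argument the paper uses, just organized slightly differently: the paper reduces to the single-root case $S=\{\alpha\}$ via the trivial observation that an intersection of $\mathbb Z$-closed sets is $\mathbb Z$-closed, then reads off $\{\alpha\}^\perp$ from the Lie subalgebra property of the centralizer in part (2) of the lemma. You instead form the centralizer $C$ of the union $\bigcup_{\alpha\in S}\mathfrak s_\alpha$ all at once and extract $\mathbb Z$-closedness of $S^\perp$ from the fact that $C$ is itself a Lie subalgebra; the intersection step is built into your description of $C$. Both routes rest on exactly the same two ingredients and are of the same length and depth.
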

\begin{proof}
Since intersections of $\mathbb Z$-closed subsets are $\mathbb Z$-closed,
it is enough to prove that $S^\perp$ is $\mathbb Z$-closed in the special
case $S=\{\alpha\}$, and this is clear from the second part of the previous
lemma. 
\end{proof}

\subsection{Non-archimedean functions $r:R \to \mathbb Z$} 
We say that a function $r:R \to \mathbb Z$ is \emph{non-archimedean} if for
every  $n \in \mathbb Z$ the set $R_n=\{\alpha \in R: r(\alpha) \ge n\}$ is
$\mathbb Z$-closed.  Clearly any root valuation function is non-archimedean. The
converse is true when
$R$ is of type
$A_n$, but not in general. 

It is evident that $r$ is a non-archimedean function if and only if 
\begin{equation}
r(-\alpha)=r(\alpha)
\end{equation}
 and 
\begin{equation}\label{eq.na}
r(\alpha+\beta) \ge
\min\{r(\alpha),r(\beta)\} \text{ whenever $\alpha+\beta  \in R$}.  
\end{equation}

\begin{remark}
For any non-archimedean function $r$, we have equality in \eqref{eq.na}
whenever $r(\alpha)\ne r(\beta)$. Indeed, we may as well assume that
$r(\alpha) >r(\beta)$, and then equality in \eqref{eq.na}  can be proved by
using 
\eqref{eq.na} twice, once for $\alpha,\beta$, and once for
$\alpha+\beta,-\alpha$ (whose sum is also a root). 
\end{remark}

Given any  function $r:R \to \mathbb Z$, we define another  
function
$r_m:R \to \mathbb Z$ by 
\begin{equation}\label{eq.r_m}
r_m(\alpha):= \max\{r(\beta): \beta \text{ is not strongly orthogonal to
$\alpha$}\}. 
\end{equation} 
Note that 
\begin{equation}
r(\alpha) \le r_m(\alpha)
\end{equation}
since $\alpha$ is not strongly orthogonal to itself. 

Now put $r'=-r_m$. Once again we put $R_n=\{\alpha \in R:r(\alpha)\ge n\}$,
and, in the same way, we put $R'_n:=\{\alpha \in R:r'(\alpha) \ge n \}$.

\begin{lemma}
The set $R'_n$ is equal to $(R_{1-n})^\perp$. Moreover $r'$ is a 
non-archimedean function. 
\end{lemma}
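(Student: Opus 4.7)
The plan is to unwind the definitions directly for the first assertion, and then to deduce the second assertion from the first by invoking Corollary \ref{cor.pcl}.

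First I would translate the condition $\alpha \in R'_n$ into a condition on $r_m(\alpha)$. By definition $r'(\alpha) \ge n$ means $r_m(\alpha) \le -n$, and by the definition \eqref{eq.r_m} of $r_m$ this in turn says exactly that every $\beta \in R$ which fails to be strongly orthogonal to $\alpha$ satisfies $r(\beta) \le -n$, i.e.\ $\beta \notin R_{1-n}$. Contrapositively, $\alpha \in R'_n$ iff every $\beta \in R_{1-n}$ is strongly orthogonal to $\alpha$, which is the defining condition for $\alpha \in (R_{1-n})^\perp$. This gives the equality $R'_n = (R_{1-n})^\perp$.

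For the second assertion, by the characterization given just before the lemma, $r'$ is non-archimedean if and only if each set $R'_n$ is $\mathbb Z$-closed. But $R'_n = (R_{1-n})^\perp$, and Corollary \ref{cor.pcl} tells us that $S^\perp$ is $\mathbb Z$-closed for any subset $S \subset R$. Hence every $R'_n$ is $\mathbb Z$-closed, and $r'$ is non-archimedean.

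Neither step presents a real obstacle: the first is purely a matter of chasing the definition of $r_m$ through two negations, and the second is an immediate application of the previously established fact that strong-orthogonality complements are $\mathbb Z$-closed. The only thing to be careful about is the off-by-one indexing ($n$ versus $1-n$), which comes from the fact that the defining inequality $r(\beta) \le -n$ in the description of $R'_n$ is strict with respect to the inequality $r(\beta) \ge 1-n$ cutting out $R_{1-n}$.
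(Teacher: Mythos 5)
Your proof is correct and follows exactly the paper's approach: unwind the definitions (with the same $n \leftrightarrow 1-n$ bookkeeping) to get $R'_n = (R_{1-n})^\perp$, then invoke Corollary \ref{cor.pcl} to conclude each $R'_n$ is $\mathbb Z$-closed and hence $r'$ is non-archimedean. The paper simply compresses the first step to ``just unwind the definitions,'' which you have spelled out.
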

\begin{proof} 
To prove the first statement, just unwind the definitions. The second
statement follows from the first, together with Corollary \ref{cor.pcl}.  
\end{proof}

\begin{remark}
It is not always the case that $r'$ is a root valuation function, even
when $r$ is itself a root valuation function. This is clear from the lemma
we just proved, since there exist $\mathbb Q$-closed subsets $S$ such
that
$S^\perp$ is not $\mathbb Q$-closed.  
\end{remark} 

\begin{remark}
The notion of root valuation function is more useful
than that of  non-archimedean function. The notion of non-archimedean
function has been introduced only in order to have a convenient way of
referring to the properties of $r_m$ that are encoded in the fact that $r'$
is a non-archimedean function. 
\end{remark}

Later we will make use of the following result. 

\begin{lemma}\label{lem.k0prep}
 Let $r:R \to \mathbb Z$ be a non-archimedean function. 
Let $\alpha$,$\beta$,$\gamma$ be three roots, one of which is equal to the
the sum of the other two. Then 
\begin{enumerate}
\item $\alpha$,$\beta$,$\gamma$ are pairwise non-strongly orthogonal,
\item $r(\alpha)+r(\beta)-r(\gamma) \le
\max\{r(\alpha),r(\beta),r(\gamma)\}$, 
\item $\max\{r(\alpha),r(\beta),r(\gamma)\}\le
\min\{r_m(\alpha),r_m(\beta),r_m(\gamma)\}$,
\item $\min\{r_m(\alpha),r_m(\beta),r_m(\gamma)\}\le
r_m(\alpha)+r_m(\beta)-r_m(\gamma)$. 
\end{enumerate} 
Here $r_m$ is obtained from $r$ as above in \eqref{eq.r_m}. 
\end{lemma}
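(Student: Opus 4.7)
The plan is to dispatch the four parts in order; each reduces to a short application of the hypotheses. For (1), I would simply unpack the definition of strong orthogonality: assuming $\gamma=\alpha+\beta$ (the other two labellings are handled identically), we have $\alpha+\beta=\gamma\in R$, $\gamma-\alpha=\beta\in R$, and $\gamma-\beta=\alpha\in R$, so none of the three pairs is strongly orthogonal.

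For (2), the key observation is that regardless of which of the three roots is the ``sum'', there exist signs $\varepsilon_1,\varepsilon_2\in\{\pm1\}$ such that $\gamma=\varepsilon_1\alpha+\varepsilon_2\beta$. Applying \eqref{eq.na} to this relation, combined with $r(-\delta)=r(\delta)$, yields
\[
r(\gamma)\ge\min\{r(\alpha),r(\beta)\},
\]
whence
\[
r(\alpha)+r(\beta)-r(\gamma)\le\max\{r(\alpha),r(\beta)\},
\]
which is actually stronger than (2). This single observation simultaneously handles the asymmetry in the conclusion (which singles out $\gamma$) and removes any need for case analysis on which root plays the role of the sum.

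For (3), I would invoke (1): each ordered pair $(\delta,\epsilon)$ from $\{\alpha,\beta,\gamma\}$ consists of non-strongly orthogonal roots (with $\delta=\epsilon$ included, since a root is not strongly orthogonal to itself), so by the definition \eqref{eq.r_m} of $r_m$ we have $r_m(\delta)\ge r(\epsilon)$. Taking the maximum over $\epsilon$ followed by the minimum over $\delta$ yields (3). For (4), I would apply (2) to the function $r':=-r_m$, which the preceding lemma identifies as non-archimedean; the resulting inequality negates termwise to (4), the asymmetry in $\gamma$ matching sign-for-sign under $r\leftrightarrow -r_m$. No step is really an obstacle; the one place to pause is in realizing that the sign observation in the second paragraph makes (2) insensitive to the labelling of the ``sum'', which in turn is what lets the reduction for (4) go through cleanly.
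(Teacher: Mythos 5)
Your proof is correct and follows essentially the same route as the paper's: (1) by inspection, (3) as an immediate consequence of (1) and the definition of $r_m$, (4) by applying (2) to the non-archimedean function $r'=-r_m$. For (2) the paper proceeds by first noting the slightly stronger structural fact that the two smallest of $r(\alpha),r(\beta),r(\gamma)$ must be equal, while you go directly via the sign trick $\gamma=\varepsilon_1\alpha+\varepsilon_2\beta$ to get $r(\gamma)\ge\min\{r(\alpha),r(\beta)\}$; this is the same underlying use of the non-archimedean inequality, just streamlined.
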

\begin{proof}
(1) is clear, since each of the three roots is either a sum or difference
of the other two, and (3) follows immediately from (1). 

Since $r$ is non-archimedean, the triple of integers
$r(\alpha)$,$r(\beta)$,$r(\gamma)$ is quite special: either all three are
equal, or two of them are equal and the other one is strictly larger. From
this (2) follows at once. Finally, (4) follows from (2), applied to the
non-archimedean function
$r'$. 
\end{proof}

\subsection{Some big root valuation lattices}
We have  seen that in order to get a root valuation lattice
$\Lambda_{r,\lambda}$ (for a given root valuation function $r$), we need a
function
$k:R\to
\mathbb Z$ satisfying the two conditions 
\begin{enumerate}
\item $  k(\alpha)+k(-\alpha) \ge
r_m(\alpha)-r(\alpha)$  \quad for all $\alpha \in R$, 
\item $k(\alpha)+k(\beta)-k(\alpha+\beta) \ge
r(\alpha+\beta)-\min\{r(\alpha),r(\beta)\}$ \quad
for all $\alpha,\beta \in R$  such that $\alpha+\beta \in R$.
\end{enumerate}

It is then natural to try to make the quantities on the lefthand sides of
the inequalities in (1) and (2) as small as possible. A first thought would
be to try to find $k$ for which equality holds in both conditions, but it is
easy to see that this is usually impossible.

So we need to try something else. In order to make $\Lambda_{r,\lambda}$ big, we
try to enforce equality in (1) without worrying about (2). Define a function $k_0$ 
on $R$ as follows:  
\begin{equation}
k_0(\alpha):=(r_m(\alpha)-r(\alpha))/2. 
\end{equation} 
For $k_0$ it is clear that equality does hold in (1), but 
of course $k_0$ takes values in $\frac{1}{2}\mathbb Z$, rather than $\mathbb Z$,
as we would have liked. We fix this  by putting 
\begin{equation}
k_1:=\lceil k_0 \rceil.
\end{equation}
\begin{lemma}
Both  $k_0$ and $k_1$ satisfy conditions \textup{(1)} and \textup{(2)}.
Since $k_1$ is integer-valued, it then provides us with a root valuation
lattice $\Lambda_{r,\lambda}$, where $\lambda=r+k_1$.  
\end{lemma}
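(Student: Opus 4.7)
The plan is to verify the two conditions of Proposition \ref{prop.rtvlat} first for $k_0$ (as a $\tfrac{1}{2}\mathbb{Z}$-valued function) and then to transfer them to $k_1 = \lceil k_0 \rceil$ by an integrality argument. Lemma \ref{lem.k0prep} is the only genuinely nontrivial input; everything else is bookkeeping.

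For $k_0$, condition (1) is immediate from the $\alpha \mapsto -\alpha$ symmetry of $r$ and $r_m$, which gives $k_0(-\alpha) = k_0(\alpha)$ and hence equality in (1) by the very definition of $k_0$. For condition (2), I would set $\gamma = \alpha + \beta$ and clear denominators, so that the inequality to be proved becomes $[r_m(\alpha) + r_m(\beta) - r_m(\gamma)] - [r(\alpha) + r(\beta) - r(\gamma)] \ge 2[r(\gamma) - \min\{r(\alpha), r(\beta)\}]$. I would then split into two cases according to whether $r(\alpha) = r(\beta)$ or not; the non-archimedean remark (equality in the strict case) reduces the problem in each case to an inequality of the form $r_m(\alpha) + r_m(\beta) - r_m(\gamma) \ge x$, where $x$ is either $r(\gamma)$ or $\max\{r(\alpha), r(\beta)\}$. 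Both versions are then immediate from the chain $x \le \min\{r_m(\alpha), r_m(\beta), r_m(\gamma)\} \le r_m(\alpha) + r_m(\beta) - r_m(\gamma)$, using parts (3) and (4) of Lemma \ref{lem.k0prep}.

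For $k_1$, condition (1) is clear since $k_1$ inherits the $\alpha \mapsto -\alpha$ symmetry and $2\lceil a/2 \rceil \ge a$ for every real $a$. For condition (2), the key observation is that $\delta(\alpha) := k_1(\alpha) - k_0(\alpha) \in \{0, \tfrac{1}{2}\}$, which gives the chain $k_1(\alpha) + k_1(\beta) - k_1(\gamma) \ge [k_0(\alpha) + k_0(\beta) - k_0(\gamma)] - \tfrac{1}{2} \ge [r(\gamma) - \min\{r(\alpha), r(\beta)\}] - \tfrac{1}{2}$. The mild obstacle here is then to close the half-unit gap: the left-hand side is an integer (since $k_1$ is integer-valued) and the target bound $r(\gamma) - \min\{r(\alpha), r(\beta)\}$ is also an integer, so the deficit of $\tfrac{1}{2}$ is absorbed automatically. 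This integrality step is the only point that requires a moment of care; the rest of the argument is routine once Lemma \ref{lem.k0prep} is available.
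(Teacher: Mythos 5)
Your proof is correct and follows essentially the same route as the paper: Lemma \ref{lem.k0prep} is the core input for $k_0$, and the integer-versus-half-integer observation closes the gap for $k_1$. The only deviations are cosmetic: for condition (2) with $k_0$, the paper works with the asymmetric form $k(\alpha)+k(\beta)-k(\gamma)\ge r(\gamma)-r(\alpha)$ from the Remark and chains together parts (2),(3),(4) of Lemma \ref{lem.k0prep} directly, whereas you keep the $\min$-form and split on whether $r(\alpha)=r(\beta)$, in effect re-deriving part (2) of the lemma; and your integrality step for $k_1$ (an integer exceeding an integer minus $\tfrac12$ must exceed that integer) is a cleaner phrasing of the same argument the paper makes by tracking which terms actually move under $k_0\mapsto\lceil k_0\rceil$.
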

\begin{proof}
It is clear that (1) holds for both functions. Next we verify (2). Consider
roots $\alpha$,$\beta$ such that 
 $\alpha+\beta$ is also a root, call it $\gamma$. We must show that 
\begin{equation}\label{eq.check}
k(\alpha)+k(\beta)-k(\gamma) \ge r(\gamma)-r(\alpha)
\end{equation}
for $k=k_0$ and $k=k_1$. We start with $k_0$. Using its explicit
definition, we see  that the inequality we must check is 
\begin{equation}\label{eq.long}
r_m(\alpha)+r_m(\beta)-r_m(\gamma) \ge r(\beta)+r(\gamma)-r(\alpha), 
\end{equation}
and
this follows from Lemma \ref{lem.k0prep}, once we note that the hypothesis
of that lemma is symmetrical in the three roots, so that the inequalities
stated in the conclusion of the lemma remain valid when the three roots are
permuted. 

This takes care of $k_0$. What about $k_1$? Obviously, for any root
$\delta$, the integer   
$k_1(\delta)$ is either $k_0(\delta)$ or $k_0(\delta)+1/2$, according as
$k_0(\delta)$ is integral or half-integral. Therefore, when we pass from
$k_0$ to $k_1$, if the lefthand side of \eqref{eq.check} decreases at all,
it can only decrease by 
$1/2$, and if this is the case, the lefthand side started out by being 
half-integral, and therefore even after decreasing by $1/2$ it remains
bigger than or equal to  the righthand side (simply because the
righthand side is an integer).  Therefore
\eqref{eq.check} also holds for
$k_1$. 
\end{proof}

\bibliographystyle{amsalpha}
\providecommand{\bysame}{\leavevmode\hbox to3em{\hrulefill}\thinspace}

\end{document}